\theoremstyle{definition}
\newtheorem{definition}{\bf Definition}[section]
\newtheorem{theorem}{Theorem}[section]
\newtheorem{lemma}{Lemma}[section]
\newtheorem{proposition}{Proposition}[section]
\newtheorem{remark}{Remark}[section]
\renewenvironment{proof}{{\bfseries \noindent Proof} }{ \qed \\}
\begin{document}

\def\R{\mathbb{R}}                   
\def\Z{\mathbb{Z}}                   
\def\Q{\mathbb{Q}}                   
\def\C{\mathbb{C}}                   
\def\N{\mathbb{N}}                   
\def\uhp{{\mathbb H}}                
\def\A{\mathbb{A}}

\def\P{\mathbb{P}}
\def\Gal{\text{Gal}}
\def\GHod{\text{GHod}}
\def\SHod{\text{SHod}}
\def\Hod{\text{Hod}}
\def\res{\text{res}}
\def\prim{\text{prim}}
\def\dR{\text{dR}}

\def\ker{{\rm ker}}              
\def\GL{{\rm GL}}                
\def\ker{{\rm ker}}              
\def\coker{{\rm coker}}          
\def\im{{\rm Im}}               
\def\coim{{\rm Coim}}            

\def\End{{\rm End}}              
\def\rank{{\rm rank}}                
\def\gcd{{\rm gcd}}                  

\begin{center}
{\LARGE\bf On fake linear cycles inside Fermat varieties
}
\footnote{ 
Math. classification: 14C25, 14C30, 14D07
}
\\
\vspace{.25in} {\large {{\sc Jorge Duque Franco}}\footnote{
Universidad de Chile, Departamento de Matmáticas, Campus Juan Gómez Millas, Las Palmeras 3425, Santiago, Chile,
{\tt georgy11235@gmail.com}}{\sc and Roberto Villaflor Loyola }}\footnote{Pontificia Universidad Católica de Chile, Facultad de Matemáticas, Campus San Joaquín, Avenida Vicuña Mackenna 4860, Santiago, Chile,
{\tt roberto.villaflor@mat.uc.cl}}
\end{center}


\begin{abstract}
We introduce a new class of Hodge cycles with non-reduced associated Hodge loci, we call them fake linear cycles. We characterize them for all Fermat varieties and show that they exist only for degrees $d=3,4,6$, where there are infinitely many in the space of Hodge cycles. These cycles are pathological in the sense that the Zariski tangent space of their associated Hodge locus is of maximal dimension, contrary to a conjecture of Movasati. Moreover they provide examples of algebraic cycles not generated by their periods in the sense of Movasati-Sert\"oz. To study them we compute their Galois action in cohomology and their second order invariant of the IVHS. We conclude that for any degree $d\ge 2+\frac{6}{n}$, the minimal codimension component of the Hodge locus passing through the Fermat variety is the one parametrizing hypersurfaces containing linear subvarieties of dimension $\frac{n}{2}$, extending results of Green, Voisin, Otwinowska and the second author.
\end{abstract}

\section{Introduction}
\label{intro}
The classical Noether-Lefschetz locus $\text{NL}_d$ is the space of degree $d\ge 4$ surfaces in $\P^3$ with Picard rank bigger than 1. This space is known to have countably many components given by algebraic subvarieties of the space of smooth degree $d$ surfaces in $\P^3$. A classical result due to Green \cite{green1988} and Voisin \cite{voisin1988} states that for $d\ge 5$ it has only one minimal codimension component, which parametrizes surfaces containing lines (for $d=4$ all components have the same codimension). The higher dimension analogue of the Noether-Lefschetz locus is the so called Hodge locus $\text{HL}_{n,d}$ which is the locus of degree $d$ hypersurfaces $X\subseteq \mathbb{P}^{n+1}$ for $n$ even, with lattice of Hodge cycles $H^{\frac{n}{2},\frac{n}{2}}(X)\cap H^n(X,\Z)$ of rank bigger than 1. This space is non-trivial for $d\ge 2+\frac{4}{n}$, and it is known to have countably many components which are algebraic subvarieties of $T\subseteq H^0(\P^{n+1},\mathcal{O}(d))$ the space of smooth degree $d$ hypersurfaces of $\P^{n+1}$. A natural question is to ask whether the analogue of Green-Voisin theorem still holds for higher dimensions, i.e. if for $d\ge 2+\frac{6}{n}$ the only minimal codimension component of the Hodge locus is $\Sigma_{(1,\ldots,1)}$ the one parametrizing hypersurfaces containing linear subvarieties of dimension $\frac{n}{2}$. The first result in this direction was obtained by Otwinowska \cite[Theorem 3]{Otw02} who answered positively the question for $d\gg n$. The conjecture for smaller degrees remains open, and even to establish that the codimension of $\Sigma_{(1,\ldots,1)}$ (which is equal to ${\frac{n}{2}+d\choose d}-(\frac{n}{2}+1)^2$) is a lower bound for the codimension of all components is also a conjecture. A partial result on the lower bound conjecture was obtained by Movasati \cite[Theorem 2]{GMCD-NL}, who proved it for all components passing through the Fermat point. The characterization of $\Sigma_{(1,\ldots,1)}$ as the only component passing through Fermat attaining this bound was recently established by the second author \cite[Theorem 1.1]{villa2020small} for $d\neq 3,4,6$. In this article we treat the remaining cases. 

All the previously mentioned results rely on the description of the Zariski tangent space of the local Hodge loci $V_\lambda\subseteq (T,t)$, associated to some Hodge cycle $\lambda\in H^{\frac{n}{2},\frac{n}{2}}(X_t)\cap H^n(X_t,\Z)$ for $X_t=\text{Supp}(t)\subseteq\P^{n+1}$ and $t\in T$, in terms of the infinitesimal variation of Hodge structure. In practice, instead of bounding the codimension of the components of the Hodge locus, one bounds the codimension of the Zariski tangent space of all $V_\lambda$. This is the case for all the previous results of Green, Voisin, Otwinowska and Movasati. In particular, Movasati proved that if $0\in T$ corresponds to the Fermat point then the codimension of $T_0V_\lambda$ is greater or equal than ${\frac{n}{2}+d\choose d}-(\frac{n}{2}+1)^2$ for all $\lambda\in H^{\frac{n}{2},\frac{n}{2}}(X_0)\cap H^n(X_0,\Z)$ non-trivial Hodge cycles of the Fermat variety. This naturally led Movasati to conjecture that this bound is attained if and only if $\lambda$ is the class of a linear algebraic cycle $\P^\frac{n}{2}\subseteq X_0$ of the Fermat variety \cite[Conjecture 18.8]{ho13}. Our main result disproves this conjecture for $d=3,4,6$ in all dimensions, providing a complete answer to Movasati's question for the cases not covered by \cite{villa2020small}.

\begin{theorem}
\label{theo2}
For $d=3,4,6\ge 2+\frac{6}{n}$ and $n$ even, there are infinitely many scheme-theoretically different Hodge loci $V_\lambda$ associated to non-trivial Hodge cycles of the Fermat variety $\lambda\in H^{\frac{n}{2},\frac{n}{2}}(X_0)\cap H^n(X_0,\Z)$ such that 
$$
\text{codim }T_0V_\lambda={\frac{n}{2}+d\choose d}-\left(\frac{n}{2}+1\right)^2.
$$
In particular, infinitely many of these Hodge cycles are not linear cycles. We call them \textit{fake linear cycles}. All fake linear cycles are of the form
$$
\lambda_\prim=\res\left(\frac{P_\lambda\Omega}{F^{\frac{n}{2}+1}}\right)
$$
where $P_\lambda$ is given (up to some relabeling of the coordinates) by
\begin{equation}
\label{plambda}    
P_\lambda=c_\lambda\prod_{j=1}^{\frac{n}{2}+1}\left(\frac{x_{{2j-2}}^{d-1}-(c_{2j-2}x_{{2j-1}})^{d-1}}{x_{{2j-2}}-c_{2j-2}x_{{2j-1}}}\right),
\end{equation}
where $c_0,c_2,\ldots,c_n\in\zeta_{2d}^{-3}\cdot\mathbb{S}^1_{\Q(\zeta_d)}=\{\zeta_{2d}^{-3}\cdot z\in \Q(\zeta_{2d}): \ z\in\Q(\zeta_d)\text{ and }|z|=1\}$ but not all being $d$-th roots of $-1$ simultaneously, and $c_\lambda\in \Q(\zeta_{2d})^\times$. Moreover, for any such choice of $c_i$'s, there exists some $c_\lambda\in\Q(\zeta_{2d})^\times$ such that the class $\lambda_\prim$, given by $P_\lambda$ as in \eqref{plambda}, is the class of a fake linear cycle.
\end{theorem}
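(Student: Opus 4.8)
The plan is to convert the statement into an explicit linear-algebra problem on the Jacobian ring of the Fermat variety, and then to isolate, among the polynomials realizing the minimal codimension, those defining integral Hodge classes. Write $F=\sum_{i=0}^{n+1}x_i^d$, let $R=\C[x_0,\dots,x_{n+1}]/(x_0^{d-1},\dots,x_{n+1}^{d-1})$ be the Jacobian ring, and recall (Carlson--Griffiths, Macaulay duality) that $H^{\frac{n}{2},\frac{n}{2}}_\prim(X_0)\cong R_\sigma$ with $\sigma=(\frac{n}{2}+1)(d-2)$, while a primitive class $\lambda_\prim=\res(P_\lambda\Omega/F^{\frac{n}{2}+1})$ corresponds to $P_\lambda\in R_\sigma$. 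In this dictionary the Jacobian ideal lies inside $T_0V_\lambda$, so that
$$
\text{codim } T_0V_\lambda=\rank\Big(R_d\xrightarrow{\ \cdot P_\lambda\ }R_{d+\sigma}\Big).
$$
By Movasati's lower bound \cite{GMCD-NL} this rank is at least $\binom{\frac{n}{2}+d}{d}-(\frac{n}{2}+1)^2$, and it attains this value for the class of a linear $\P^{\frac{n}{2}}\subseteq X_0$. Thus the theorem reduces to three tasks: (i) characterize all $P\in R_\sigma$ for which multiplication has minimal rank; (ii) decide which such $P$ give rational (integral) Hodge classes; and (iii) show that infinitely many of the latter are not classes of linear subvarieties.

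For (i) I would exploit the crucial feature that the Fermat Jacobian ring factors as a graded tensor product $R=\bigotimes_{j=1}^{\frac{n}{2}+1}B_j$, where $B_j=\C[x_{2j-2},x_{2j-1}]/(x_{2j-2}^{d-1},x_{2j-1}^{d-1})$ is the truncated ring in the $j$-th coordinate pair. A product $P=\prod_j g_j$ with $g_j\in B_j$ of degree $d-2$ then multiplies block-diagonally, the image $P\cdot R$ being the tensor product of the images $g_j\cdot B_j$; pairing this with Macaulay duality lets me compute the rank in the single relevant degree and show it is minimized exactly when each $g_j$ is, up to a Fermat automorphism, the numerator of a geometric series $\frac{x_{2j-2}^{d-1}-(c_{2j-2}x_{2j-1})^{d-1}}{x_{2j-2}-c_{2j-2}x_{2j-1}}$, i.e. a product of $d-2$ linear forms $x_{2j-2}-\eta\,c_{2j-2}x_{2j-1}$ with $\eta^{d-1}=1$. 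The genuinely hard point --- and where I expect the main obstacle --- is the converse: ruling out all non-product minimizers. I would attack it by combining the first-order rank condition with the second-order invariant of the IVHS (the obstruction governing the non-reducedness of $V_\lambda$), using it to force the support of $P$ to separate into the $\frac{n}{2}+1$ disjoint coordinate pairs and each block to degenerate to the stated form. This is the delicate combinatorial and deformation-theoretic heart of the argument, and the place where the special degrees $d=3,4,6$ must be handled directly rather than inherited from \cite{villa2020small}.

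For (ii) I would use the diagonal action $G=\mu_d^{n+2}/\mu_d$ on $X_0$, under which $H^n_\prim(X_0,\C)=\bigoplus_a V_a$ splits into character eigenlines indexed by $a=(a_0,\dots,a_{n+1})$ with $0<a_i<d$ and $\sum a_i\equiv 0\ (\mathrm{mod}\ d)$, the middle Hodge pieces being those with $\sum a_i=(\frac{n}{2}+1)d$. Expanding $P_\lambda$ from \eqref{plambda} in the monomial basis exhibits $\lambda_\prim$ as an explicit combination of these eigenlines, and rationality of the class is equivalent to invariance under $\Gal(\Q(\zeta_d)/\Q)$ acting by $a\mapsto ta$. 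Tracking this action through the geometric-series expansion shows that the Galois orbit of coefficients closes up precisely under the constraint $c_i\in\zeta_{2d}^{-3}\cdot\mathbb{S}^1_{\Q(\zeta_d)}$, and that a single normalizing scalar $c_\lambda\in\Q(\zeta_{2d})^\times$ then renders $\lambda_\prim$ Galois-invariant, hence rational and, after clearing denominators, integral. This yields the ``Moreover'' clause: every admissible tuple $(c_i)$ produces an honest Hodge class.

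Finally, for (iii) and the counting, I would note that the $d$-th roots of $-1$ lie in $\zeta_{2d}^{-3}\cdot\mathbb{S}^1_{\Q(\zeta_d)}$, and that the honest linear cycles arise exactly when every $c_j$ is a $d$-th root of $-1$, of which there are only finitely many tuples. Since $\Q(\zeta_d)$ is imaginary quadratic for $d=3,4,6$, the group $\mathbb{S}^1_{\Q(\zeta_d)}$ of norm-one elements is infinite (the rational points of a conic) while containing only finitely many roots of unity; hence \eqref{plambda} produces infinitely many distinct Hodge classes. Because $X_0$ contains only finitely many linear $\P^{\frac{n}{2}}$, there are only finitely many linear-cycle classes, so infinitely many of our classes cannot equal any of them --- these are the fake linear cycles. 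Distinct admissible tuples yield non-proportional classes whose Hodge loci are distinguished, as non-reduced schemes, by the accompanying second-order invariant; since each $V_\lambda$ carries the maximal Zariski tangent space allowed by Movasati's bound, this produces infinitely many scheme-theoretically different minimal-codimension Hodge loci, completing the proof.
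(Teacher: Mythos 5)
Your overall architecture --- (i) classify the $P_\lambda$ realizing the minimal tangent-space codimension, (ii) determine which coefficient tuples give rational classes via the $\mu_d^{n+2}/\mu_d$-eigenspace decomposition and Galois invariance, (iii) count --- is the same as the paper's, and your counting argument in (iii) (norm-one elements of the imaginary quadratic field $\Q(\zeta_d)$ form an infinite set containing only finitely many roots of unity, while linear cycles account for only finitely many tuples) is essentially what the paper uses. However, two of your steps contain genuine problems.

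First, in step (i) you propose to rule out non-product minimizers by ``combining the first-order rank condition with the second-order invariant of the IVHS.'' This is a conceptual misstep: $\text{codim}\,T_0V_\lambda$ is the rank of multiplication by $P_\lambda$ on the Jacobian ring, a purely first-order (indeed purely commutative-algebraic) quantity, and the quadratic fundamental form is irrelevant to classifying its minimizers. Moreover this classification does not need to be redone for $d=3,4,6$: the paper's \cref{lemmatech} simply cites \cite[Propositions 4.1 and 5.3]{villa2020small}, which are valid for all $d\ge 2+\frac{6}{n}$ and already force $P_\lambda$ into the shape \eqref{plambda} with a priori arbitrary $c_i\in\C^\times$. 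The special degrees only enter later, when deciding which $c_i$ yield Hodge classes. (The second-order invariant is used in the paper only for \cref{theo1}, to show the loci are non-reduced.)

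Second, step (ii) is the actual heart of the theorem and your sketch omits its two key mechanisms. The constraint $c_i\in\zeta_{2d}^{-3}\cdot\mathbb{S}^1_{\Q(\zeta_d)}$ is extracted in the paper not from ``the Galois orbit closing up'' but from an explicit computation of the periods $\int_{\delta_{\beta'}}\lambda_\prim$ over the vanishing cycles and their rationality (\cref{propboundfd}); carrying out your eigenline expansion also requires controlling the field of definition of each $\omega_\beta$, i.e.\ showing the Gamma-factor $C_\beta=\prod_i\Gamma(a_i/d)/(2\pi i)^{\frac{n}{2}+1}$ lies in $\Q(\zeta_{2d})$, which the paper obtains by comparing against the known rational class of a true linear cycle (\cref{galact}). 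Most importantly, the existence of the normalizing scalar $c_\lambda$ is not automatic: the relations $\sigma(c_\lambda)=c_\lambda\,\phi_\sigma$ define a $1$-cocycle $\phi\colon\Gal(\Q(\zeta_{2d})/\Q)\to\Q(\zeta_{2d})^\times$, and one must invoke Hilbert's Theorem 90 to split it. You assert that ``a single normalizing scalar then renders $\lambda_\prim$ Galois-invariant'' without identifying this cohomological input, so as written the ``Moreover'' clause --- the existence direction, which is the subtle part of the theorem --- is not established.
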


We point out that the condition on the $c_i$'s not all being $d$-th roots of $-1$ simultaneously is to avoid that $\lambda_\prim$ becomes the class of a true linear cycle. Since the Hodge conjecture is known for these Fermat varieties \cite{shioda1979hodge} we know that fake linear cycles are rational combinations of linear cycles. The proof of the above result follows after a first order analysis of the Hodge loci.

Curiously Fermat varieties of degrees $d=3,4,6$ correspond exactly to those where the group $H^n(X^n_d,\Z)_\text{alg}$ of algebraic cycles has maximal rank $h^{\frac{n}{2},\frac{n}{2}}$ (see \cref{proppicmax} and \cite{beauville2014some} for a survey on these rare to find varieties). The subtle part of the above result is showing the existence of $c_\lambda$ in such a way that the corresponding class is a Hodge class. For this is necessary to describe the Galois action of $\Q(\zeta_{2d})/\Q$ on the space of totally decomposable Hodge monomials in the sense of Shioda \cite{shioda1979hodge}.
An immediate consequence of \cref{theo2} is that the Artinian Gorenstein ideal associated to each fake linear cycle is of the form
\begin{equation}
\label{idealfake}    
J^{F,\lambda}=\langle x_0-c_0x_2,\ldots,x_n-c_nx_{n+1},x_0^{d-1},\ldots,x_{n+1}^{d-1}\rangle.    
\end{equation}
The name fake linear cycle is inspired from this fact and the principle introduced by Movasati-Sert\"oz \cite{movasati2020reconstructing} which predicts that for ``good enough" algebraic cycles, one should obtain the supporting equations of a representative of the cycle as generators of $J^{F,\lambda}$ for small degrees. It was proved by Cifani-Pirola-Schlesinger \cite{cifani2021reconstructing} that all arithmetically Cohen-Macaulay curves inside a smooth surface in $\P^3$ satisfy this principle, which says that the curve can be \textit{reconstructed from its periods}. It was also shown by them that not all curves can be reconstructed from their periods (e.g. a rational degree 4 curve inside a quartic). After \eqref{idealfake} we see that fake linear cycles provide more examples (of any dimension) of algebraic cycles which cannot be reconstructed from their periods. In fact, otherwise the supporting equations of the cycle should be the $\frac{n}{2}+1$ equations of degree 1 which define a $\frac{n}{2}$-dimensional linear subvariety inside $\P^{n+1}$, but this linear variety is never contained in $X^n_d$. 

Beside the above anomalous properties of fake linear cycles, we show that their associated Hodge loci are non-reduced, completing thus the proof of following result.

\begin{theorem}
\label{theo1}
For $n$ even and $d\ge 2+\frac{6}{n}$ the unique component of minimal codimension of the Hodge locus $\text{HL}_{n,d}$ passing through the Fermat variety is $\Sigma_{(1,\ldots,1)}$, i.e. the one parametrizing hypersurfaces containing linear subvarieties of dimension $\frac{n}{2}$.
\end{theorem}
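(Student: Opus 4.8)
The plan is to upgrade the first-order picture that underlies all previous work to a second-order one that detects the non-reducedness announced just before the statement. First I reduce to $d\in\{3,4,6\}$, since for every other degree with $d\ge 2+\frac{6}{n}$ the assertion is exactly \cite[Theorem 1.1]{villa2020small}. So fix $d\in\{3,4,6\}$ and $n$ even with $d\ge 2+\frac{6}{n}$, let $T$ be the space of smooth degree $d$ forms and $0\in T$ the Fermat point. Let $Z$ be any irreducible component of $\text{HL}_{n,d}$ through $0$ and choose a non-trivial Hodge class $\lambda\in H^{\frac{n}{2},\frac{n}{2}}(X_0)\cap H^n(X_0,\Z)$ generically supported on $Z$, so that $Z\subseteq V_\lambda$ as germs at $0$. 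Using $\dim_0V_\lambda\le\dim T_0V_\lambda$ together with Movasati's lower bound \cite[Theorem 2]{GMCD-NL}, I obtain the chain
\[
\text{codim}\,Z\ \ge\ \text{codim}\,V_\lambda\ \ge\ \text{codim}\,T_0V_\lambda\ \ge\ \binom{\frac{n}{2}+d}{d}-\left(\frac{n}{2}+1\right)^2 .
\]
Since $\Sigma_{(1,\ldots,1)}$ attains the rightmost value and contains $0$, it is a component of minimal codimension, and the only content is uniqueness. Crucially, if $\text{codim}\,Z$ equals the bound then every inequality becomes an equality, so $\lambda$ attains the minimal first-order bound and $\dim_0V_\lambda=\dim T_0V_\lambda$, i.e. $V_\lambda$ is regular, hence reduced, at $0$.

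Next I invoke the classification contained in \cref{theo2}: the non-trivial Hodge classes of $X_0$ with $\text{codim}\,T_0V_\lambda$ equal to the bound are precisely the classes of honest linear cycles $\P^{\frac{n}{2}}\subseteq X_0$ together with the fake linear cycles given by \eqref{plambda}. For an honest linear cycle the germ $(V_\lambda,0)$ is smooth and coincides with $\Sigma_{(1,\ldots,1)}$, which forces $Z=\Sigma_{(1,\ldots,1)}$. Therefore uniqueness follows once I prove that for \emph{every} fake linear cycle the germ $(V_\lambda,0)$ is not smooth, that is $\dim_0V_\lambda<\dim T_0V_\lambda$; this contradicts the equality forced in the previous paragraph and eliminates the fake case.

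The crux is thus a second-order obstruction computation in the Jacobian ring $R_F=\C[x_0,\ldots,x_{n+1}]/(x_0^{d-1},\ldots,x_{n+1}^{d-1})$. Writing $\sigma=(\frac{n}{2}+1)(d-2)$, the standard identification $R_F^\sigma\cong H^{\frac{n}{2},\frac{n}{2}}_{\prim}(X_0)$ represents $\lambda$ by $P=P_\lambda$ and realises $T_0V_\lambda$ as the kernel of multiplication $\cdot P\colon R_F^d\to R_F^{\sigma+d}$, i.e. the degree $d$ part of $\text{Ann}(P)=J^{F,\lambda}$ from \eqref{idealfake}. Given $G\in T_0V_\lambda$, I lift the relation $PG=0$ to an identity $PG=\sum_i x_i^{d-1}A_i$ of polynomials and, via Griffiths' pole-order reduction, produce the first-order correction $P_1=[\sum_i\partial_iA_i]\in R_F^\sigma$. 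The obstruction to integrating $G$ to second order is then the quadratic class
\[
q(G)=[\,P_1\cdot G\,]\in R_F^{\sigma+d}/(P\cdot R_F^d),
\]
and $G$ extends to an arc in $V_\lambda$ exactly when $q(G)=0$. I would compute $q$ explicitly from the product shape of $P$ in \eqref{plambda} and exhibit a single direction $G\in T_0V_\lambda$ with $q(G)\neq 0$; a nonzero quadratic obstruction confines the tangent cone of $V_\lambda$ at $0$ to a proper subvariety of $T_0V_\lambda$, giving $\dim_0V_\lambda<\dim T_0V_\lambda$ and hence non-reducedness at $0$, exactly as required.

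The main difficulty I expect is precisely this nonvanishing $q(G)\neq 0$. By design, the first-order data of a fake linear cycle is indistinguishable from that of a genuine linear cycle, so the entire distinction is concentrated in the quadratic term; controlling it demands a careful choice of the lift $A_i$ and an honest computation modulo $P\cdot R_F^d$. The arithmetic hypothesis on the $c_i$ in \eqref{plambda} — that they are not all $d$-th roots of $-1$ simultaneously, which is exactly what separates fake from true linear cycles — must be the input that makes the obstruction survive, and verifying that it does is where I expect the real work to lie.
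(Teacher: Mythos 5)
Your strategy coincides with the paper's: reduce to $d=3,4,6$ via \cite[Theorem 1.1]{villa2020small}, use the chain $\text{codim}\,Z\ge\text{codim}\,T_0V_\lambda\ge{\frac{n}{2}+d\choose d}-(\frac{n}{2}+1)^2$ together with the classification of \cref{theo2} to see that a second minimal component would force some $V_\lambda$ attached to a \emph{fake} linear cycle to be smooth of dimension $\dim T_0V_\lambda$ at the Fermat point, and then rule this out by a nonvanishing second-order obstruction. That reduction is fine. The genuine gap is that the one step carrying all the content --- exhibiting an explicit $G\in T_0V_\lambda=J^{F,\lambda}_d$ whose quadratic obstruction is nonzero --- is only announced, and you yourself flag it as where the real work lies. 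This computation is the entire point of the paper's final section: using \eqref{idealfake} one takes $G=(x_{2i-2}-c_{2i-2}x_{2i-1})\cdot D$ and Maclean's formula yields (\cref{propqff})
\begin{equation*}
q_r(G,G)=\frac{-c_\lambda}{d}\prod_{j\neq i}\left(\frac{x_{2j-2}^{d-1}-(c_{2j-2}x_{2j-1})^{d-1}}{x_{2j-2}-c_{2j-2}x_{2j-1}}\right)\cdot D^2\cdot\left(c_{2i-2}^d+1\right),
\end{equation*}
where the factor $c_{2i-2}^d+1$ is nonzero for some $i$ precisely because $\lambda$ is fake rather than a true linear cycle. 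Without this (or an equivalent) computation the argument does not close.

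Two further cautions should you carry it out. First, your proposed obstruction $q(G)=[P_1\cdot G]$ with $P_1=[\sum_i\partial_iA_i]$ is only the ``symmetric half'' of the correct invariant: Maclean's quadratic fundamental form is $q_r(G,H)=\sum_i\bigl(H\,\partial_iQ_i-R_i\,\partial_iG\bigr)$, and the discrepancy with $G\cdot\sum_i\partial_iQ_i$ is not obviously contained in $J^F+\langle P_\lambda\rangle$, so you should work with the precise formula rather than the heuristic lift. Second, nonvanishing must be certified in the quotient $R^F_{r}/\langle P_\lambda\rangle$, i.e.\ you must check that the displayed polynomial is not a multiple of $P_\lambda$ modulo the Jacobian ideal; this is exactly what the auxiliary degree $d-1$ factor $D$ is for, and it has to be chosen with some care depending on $d$ and $n$. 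Neither point is fatal, but both must be addressed before the ``$\dim_0V_\lambda<\dim T_0V_\lambda$'' conclusion, and with it the uniqueness of $\Sigma_{(1,\ldots,1)}$, is actually established.
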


For the proof of \cref{theo1} it is necessary to compute the quadratic fundamental form of the Hodge loci associated to fake linear cycles. For this we rely on the description of this second order invariant of the IVHS introduced by Maclean \cite[Theorem 7]{maclean2005second}.

The text is organized as follows: In \S \ref{sec2} we recall the cohomology and homology of Fermat varieties. Section \S \ref{sec3} is devoted to the computation of the field of definition of totally decomposable Hodge monomials, together with the explicit description of the Galois action on them (see \cref{galact}). In \S \ref{sec4} we recall the basic results and notations about the Artinian Gorenstein ideal associated to a Hodge cycle based on \cite{villa2020small}. The proof of \cref{theo2} is given in \S \ref{sec5}. And section \S \ref{sec6} is devoted to the computation of the quadratic fundamental form associated to each fake linear cycle and the proof of \cref{theo1}.

\section{Topology of Fermat varieties}
\label{sec2}
In this section we describe the homology and cohomology groups of Fermat varieties. For this we start recalling the notations and main results of Shioda \cite{shioda1979hodge}. Let 
$$
X^n_d:=\{F:=x_0^d+\cdots +x_{n+1}^d=0\}
$$
be the $n$-dimensional Fermat variety of degree $d$. Shioda described the cohomology groups $H^n_\dR(X^n_d)$ in terms of a spectral decomposition compatible with the Hodge decomposition. This decomposition goes as follows. Let 
$$
G^n_d:=(\mu_d)^{n+2}/\Delta(\mu_d) \ , \ \ \ \ \mu_d:=\langle\zeta_d\rangle\simeq \Z/d\Z \ \text{the group of $d$-th roots of unity}.
$$
The above group acts on $X^n_d$ by coordinate-wise multiplication
\begin{equation}
\label{actionGnm}
g=(g_0,\ldots,g_{n+1}) \ , \ \ \ g\cdot x=(g_0\cdot x_0:\cdots: g_{n+1}\cdot x_{n+1}).
\end{equation}
The dual group $\hat{G}^n_d$ corresponds to the group of characters
$$
\hat{G}^n_d:=\{\alpha=(a_0,\ldots,a_{n+1})\in (\Z/d\Z)^{n+2}: a_0+\cdots+a_{n+1}=0\}
$$
whose pairing with $G^n_d$ is
$$
\alpha(g):=g_0^{a_0}\cdots g_{n+1}^{a_{n+1}}.
$$
The action of $G^n_d$ on $X^n_d$ induces an action of $G^n_d$ on $H^n(X^n_d,\Z)$ and $H^n(X^n_d,\Z)_\prim$, which naturally extends to $H^n(X^n_d,\Z)_\prim\otimes \C\simeq H^n_\dR(X^n_d)_\prim$. We have the following decomposition
\begin{equation}
\label{shiodadesc}
H^n_\dR(X^n_d)_\prim=\bigoplus_{\alpha\in \hat{G}^n_d}V(\alpha)    
\end{equation}
which is finer than the Hodge decomposition, and where 
$$
V(\alpha):=\{\omega\in H^n_\dR(X^n_d)_\prim: g^*\omega=\alpha(g)\omega \ , \ \forall g\in G^n_d\}.
$$
The following is the main result of \cite{shioda1979hodge}.

\begin{theorem}[Shioda]
\label{theoshioda}
\begin{itemize}
    \item[(i)] $\dim V(\alpha)=1$ if $a_0\cdots a_{n+1}\neq 0$, and $V(\alpha)=0$ otherwise.
    \item[(ii)] Each piece of the Hodge decomposition corresponds to
    $$
    H^{p,q}(X^n_d)_\prim=\bigoplus_{|\alpha|=q+1} V(\alpha),
    $$
    where $|\alpha|:=\frac{1}{d}\sum_{i=0}^{n+1}\overline{a_i}$, and $\overline{a_i}\in\{0,\ldots,d-1\}$ is the residue of $a_i$ modulo $d$.
    \item[(iii)] If $n$ is even, then
    $$
    (H^{\frac{n}{2},\frac{n}{2}}(X^n_d)_\prim\cap H^n(X^n_d,\Z))\otimes\C=\bigoplus_{\alpha\in \mathcal{B}^n_d}V(\alpha)
    $$
    with 
    $$
    \mathcal{B}^n_d:=\left\{\alpha\in \hat{G}^n_d: |t\cdot \alpha|=\frac{n}{2}+1 \ , \ \ \forall t\in (\Z/d\Z)^\times\right\}.
    $$
\end{itemize}
\end{theorem}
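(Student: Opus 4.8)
The plan is to derive all three parts from Griffiths' residue description of the primitive cohomology of a smooth hypersurface, together with a direct computation of the group action on residue forms. First I would recall that for a smooth degree $d$ hypersurface $X=\{F=0\}\subseteq\P^{n+1}$ the residue map induces an isomorphism $H^{n-q,q}(X)_\prim\cong R_F^{(q+1)d-(n+2)}$, sending $P\mapsto\res\left(\frac{P\Omega}{F^{q+1}}\right)$, where $R_F=\C[x_0,\ldots,x_{n+1}]/J_F$ is the Jacobian ring, $J_F=\langle\partial_0 F,\ldots,\partial_{n+1}F\rangle$, and $\Omega=\sum_i(-1)^ix_i\,dx_0\wedge\cdots\widehat{dx_i}\cdots\wedge dx_{n+1}$. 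For the Fermat polynomial $\partial_i F=d\,x_i^{d-1}$, so $J_F=\langle x_0^{d-1},\ldots,x_{n+1}^{d-1}\rangle$ and $R_F$ has the monomial basis $\{x^b=x_0^{b_0}\cdots x_{n+1}^{b_{n+1}}:0\le b_i\le d-2\}$, with the graded piece of degree $(q+1)d-(n+2)$ spanned by those $x^b$ with $\sum_i b_i=(q+1)d-(n+2)$.

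Next I would compute how $g=(g_0,\ldots,g_{n+1})\in G^n_d$ acts on a residue class. Since $g_i^d=1$ one has $g^*F=F$, while $g^*\Omega=(\prod_j g_j)\Omega$ and $g^*x^b=(\prod_j g_j^{b_j})x^b$; hence $g^*\res\left(\frac{x^b\Omega}{F^{q+1}}\right)=\alpha(g)\res\left(\frac{x^b\Omega}{F^{q+1}}\right)$ with $\alpha=(b_0+1,\ldots,b_{n+1}+1)\bmod d$, the crucial $+(1,\ldots,1)$ shift coming precisely from the transformation of $\Omega$. As each $b_i$ ranges over $\{0,\ldots,d-2\}$ the residues $a_i=b_i+1$ range over $\{1,\ldots,d-1\}$, so every basis monomial produces a one-dimensional eigenspace $V(\alpha)$ with all $a_i\ne 0$, and conversely every $\alpha\in\hat G^n_d$ with $a_0\cdots a_{n+1}\ne 0$ arises from a unique such monomial ($b_i=a_i-1$ for the representative $a_i\in\{1,\ldots,d-1\}$); the membership constraint $\sum a_i\equiv 0$ is automatic since $\sum a_i=\sum b_i+(n+2)=(q+1)d$. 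Because these monomials span all of $H^n_\dR(X^n_d)_\prim$, there is no room left for eigenvectors with some $a_i=0$, giving (i). For (ii), such a monomial has $a_i\in\{1,\ldots,d-1\}$, so $\overline{a_i}=a_i$ and $|\alpha|=\frac{1}{d}\sum a_i=q+1$, matching $H^{n-q,q}(X^n_d)_\prim=\bigoplus_{|\alpha|=q+1}V(\alpha)$.

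For part (iii) I would bring in the Galois action, which I expect to be the main obstacle. The decomposition \eqref{shiodadesc} is defined over $\Q(\zeta_d)$, and $\Gal(\Q(\zeta_d)/\Q)\cong(\Z/d\Z)^\times$ acts semilinearly on $H^n(X^n_d,\Q(\zeta_d))_\prim$ with invariants the rational cohomology $H^n(X^n_d,\Q)_\prim$; the automorphism $\sigma_t$ (with $\zeta_d\mapsto\zeta_d^t$) permutes the eigenspaces by $V(\alpha)\mapsto V(t\cdot\alpha)$. A primitive rational Hodge class lies in $H^{\frac{n}{2},\frac{n}{2}}(X^n_d)_\prim\cap H^n(X^n_d,\Q)_\prim$, hence is a Galois-invariant sum of eigencomponents in pieces $V(\alpha)$ with $|\alpha|=\frac{n}{2}+1$; Galois-invariance forces the entire orbit $\{V(t\cdot\alpha)\}$ to lie in the same Hodge piece, i.e. $|t\cdot\alpha|=\frac{n}{2}+1$ for all $t\in(\Z/d\Z)^\times$, which is exactly the condition defining $\mathcal{B}^n_d$. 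Conversely, for $\alpha\in\mathcal{B}^n_d$ the Galois-orbit sums produce rational classes lying inside $H^{\frac{n}{2},\frac{n}{2}}(X^n_d)_\prim$, and these account for the whole space, yielding $(H^{\frac{n}{2},\frac{n}{2}}(X^n_d)_\prim\cap H^n(X^n_d,\Z))\otimes\C=\bigoplus_{\alpha\in\mathcal{B}^n_d}V(\alpha)$. The delicate points to verify are that the eigenspace decomposition is genuinely rational over $\Q(\zeta_d)$, that the semilinear Galois action indeed permutes the $V(\alpha)$ as claimed, and that passing from rationality to integrality does not shrink the $\C$-span — the last holding because the saturated lattice of primitive Hodge classes has the same rank as its rational span.
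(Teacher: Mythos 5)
Your proposal is essentially correct, but there is nothing in the paper to compare it against: \cref{theoshioda} is quoted from Shioda and the text only cites \cite{shioda1979hodge}, giving no proof. What you write is in substance the standard argument, and it dovetails with computations the paper does carry out elsewhere. For (i)--(ii), your route --- compute the $G^n_d$-action on the residue classes $\omega_\beta$, match the degree-$((q+1)d-n-2)$ monomial basis of the Jacobian ring bijectively with the characters having all $a_i\neq 0$ and $|\alpha|=q+1$, and finish by a dimension count --- is exactly the computation appearing in the proof of \cref{proprelGrSh}, except with the logical direction reversed: the paper assumes (i) and deduces $V(\alpha)=\C\cdot\omega_\beta$, whereas you use the $\omega_\beta$ to exhibit the eigenvectors and then count to get (i) and (ii) simultaneously. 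That works, provided you note that each $V(\alpha)$, being a one-dimensional eigenspace for a group of biholomorphisms, sits inside a single Hodge piece, so the nonvanishing of the image of $\omega_\beta$ in $F^{n-q}/F^{n-q+1}$ pins down which one. For (iii), your Galois-orbit argument is the one underlying \cref{propomegaprima} and \cite{dmos}. The two points you flag as ``delicate'' are genuine obligations but standard: the $\Q(\zeta_d)$-rationality of the eigenspace decomposition holds because each $g^*$ preserves $H^n(X^n_d,\Q)\otimes\Q(\zeta_d)$ and has eigenvalues in $\Q(\zeta_d)$; and the claim that the Galois-orbit sums $\sum_t\sigma_t(c\,\eta_\alpha)$, as $c$ runs over a $\Q$-basis of $\Q(\zeta_d)$, span $\bigoplus_t V(t\cdot\alpha)$ over $\C$ is Galois descent for semilinear representations (additive Hilbert 90). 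With those supplied the argument is complete; the only remaining caveat is the $(2\pi i)^{n/2}$ normalization in the paper's rational structure (a Tate twist), which is immaterial here since both the $G^n_d$-action and the Galois action respect it.
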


The previous result can be complemented with Griffiths basis theorem \cite{gr69}. This theorem describes the primitive cohomology classes of any smooth hypersurface $X=\{F=0\}\subseteq\P^{n+1}$ in terms of the Jacobian ring $R^F:=\C[x_0,\ldots,x_{n+1}]/J^F$, where $J^F:=\langle\frac{\partial F}{\partial x_0},\ldots,\frac{\partial F}{\partial x_{n+1}}\rangle$ is the Jacobian ideal. This description is compatible with the Hodge filtration and is done via the residue map as follows
$$
R^F_{d(q+1)-n-2}\xrightarrow{\sim}F^pH^n_\dR(X)_\prim/F^{p+1}H^n_\dR(X)_\prim
$$
$$
P\mapsto \omega_P:=\res\left(\frac{P\Omega}{F^{q+1}}\right).
$$
In the particular case of the Fermat variety one has
\begin{equation}
\label{griffbasdesc}
H^n_\dR(X^n_d)_\prim=\bigoplus_\beta\C\cdot\omega_\beta
\end{equation}
where $\omega_\beta=\res\left(\frac{x^\beta\Omega}{F^{\frac{n}{2}+1}}\right)$ and $\beta=(\beta_0,\ldots,\beta_{n+1})$ with
$\beta_i\in\{0,\ldots,d-2\}$ such that $\frac{1}{d}(\deg(x^\beta)+n+2)\in\Z$. The relation between Griffiths decomposition \eqref{griffbasdesc} and Shioda's decomposition \eqref{shiodadesc} is clarified by the following proposition.

\begin{proposition}
\label{proprelGrSh}
Let $\alpha=(a_0,\ldots,a_{n+1})\in \hat{G}^n_d$ be such that $a_0\cdots a_{n+1}\neq 0$, then 
$$
V(\alpha)=\C\cdot\omega_\beta
$$
where $\beta_i=\overline{a_i}-1$ for all $i=0,\ldots,n+1$. In particular for any polynomial $P\in R^F_{(d-2)(\frac{n}{2}+1)}$
\begin{center}
$\omega_P\in (H^{\frac{n}{2},\frac{n}{2}}(X^n_d)_\prim\cap H^n(X^n_d,\Z))\otimes\C \ \ \ $ if and only if $ \ \ \ P\in\displaystyle\bigoplus_{\begin{smallmatrix}\alpha\in\mathcal{B}^n_d, \\ V(\alpha)=\C\cdot\omega_\beta\end{smallmatrix}}\C\cdot x^\beta$.    
\end{center}
\end{proposition}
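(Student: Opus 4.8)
The plan is to reduce the entire statement to the single fact that each Griffiths residue $\omega_\beta$ is a $G^n_d$-eigenvector, and to identify its eigen-character. First I would record how the group acts on the three ingredients of $\omega_\beta=\res\left(\frac{x^\beta\Omega}{F^{\frac{n}{2}+1}}\right)$. Writing $\phi_g$ for the automorphism $x\mapsto g\cdot x$ of \eqref{actionGnm}, one has $\phi_g^*x_i=g_ix_i$ and $\phi_g^*dx_i=g_i\,dx_i$, so that $\phi_g^*x^\beta=\left(\prod_i g_i^{\beta_i}\right)x^\beta$; since each $g_i\in\mu_d$ satisfies $g_i^d=1$, the Fermat polynomial is invariant, $\phi_g^*F=\sum_i g_i^dx_i^d=F$; and because $\Omega$ is multilinear of total degree one in each coordinate and its differential, $\phi_g^*\Omega=\left(\prod_i g_i\right)\Omega$. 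Multiplying these and invoking the $G^n_d$-equivariance of the residue map (which applies precisely because $\phi_g$ preserves $X^n_d$), I obtain
$$
g^*\omega_\beta=\left(\prod_{i=0}^{n+1}g_i^{\beta_i+1}\right)\omega_\beta.
$$

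Next I would substitute $\beta_i=\overline{a_i}-1$, so that $\beta_i+1=\overline{a_i}$ and, using $g_i^d=1$, $\prod_i g_i^{\beta_i+1}=\prod_i g_i^{\overline{a_i}}=\prod_i g_i^{a_i}=\alpha(g)$. Hence $g^*\omega_\beta=\alpha(g)\omega_\beta$ for all $g\in G^n_d$, i.e. $\omega_\beta\in V(\alpha)$. It then remains to verify that $\omega_\beta$ is a genuine nonzero Griffiths basis vector: the hypothesis $a_0\cdots a_{n+1}\neq 0$ forces $\overline{a_i}\in\{1,\ldots,d-1\}$, hence $\beta_i\in\{0,\ldots,d-2\}$, and $\deg(x^\beta)+n+2=\sum_i\overline{a_i}\equiv 0\pmod d$ because $\alpha\in\hat{G}^n_d$; therefore $\omega_\beta$ occurs in the Griffiths basis \eqref{griffbasdesc} and is nonzero. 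Since $\dim V(\alpha)=1$ by \cref{theoshioda}(i), the inclusion $\C\cdot\omega_\beta\subseteq V(\alpha)$ must be an equality.

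For the ``in particular'' clause I would observe that the relevant degree is exactly the middle one, since $(d-2)\left(\frac{n}{2}+1\right)=d\left(\frac{n}{2}+1\right)-(n+2)$, so Griffiths' theorem furnishes a linear isomorphism $R^F_{(d-2)(\frac{n}{2}+1)}\xrightarrow{\sim}H^{\frac{n}{2},\frac{n}{2}}(X^n_d)_\prim$, $P\mapsto\omega_P$. For the Fermat polynomial $J^F=\langle x_0^{d-1},\ldots,x_{n+1}^{d-1}\rangle$, so the monomials $x^\beta$ with $\beta_i\in\{0,\ldots,d-2\}$ descend to a basis of $R^F$ in each degree, and this isomorphism carries the monomial basis to the Griffiths basis $\{\omega_\beta\}$. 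Combining the first part of the proposition with \cref{theoshioda}(iii) gives
$$
\left(H^{\frac{n}{2},\frac{n}{2}}(X^n_d)_\prim\cap H^n(X^n_d,\Z)\right)\otimes\C=\bigoplus_{\alpha\in\mathcal{B}^n_d}V(\alpha)=\bigoplus_{\begin{smallmatrix}\alpha\in\mathcal{B}^n_d,\\ V(\alpha)=\C\cdot\omega_\beta\end{smallmatrix}}\C\cdot\omega_\beta,
$$
and pulling this subspace back through the monomial-to-$\omega_\beta$ isomorphism yields precisely the stated characterization of $P$.

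The step I expect to demand the most care is the eigenvalue computation — in particular pinning down the convention for the induced action $g^*$ so that the eigen-character comes out as $\alpha(g)$ rather than $\alpha(g)^{-1}$ (equivalently, that $\beta_i=\overline{a_i}-1$ rather than $\beta_i=\overline{-a_i}-1$). Once the residue map is granted to be $G^n_d$-equivariant, everything else is bookkeeping with the Jacobian ring basis and Shioda's dimension count.
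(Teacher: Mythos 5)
Your proposal is correct and follows essentially the same route as the paper: the paper's proof is exactly the eigenvalue computation $g^*\omega_\beta=\prod_i g_i^{\beta_i+1}\omega_\beta=\alpha(g)\omega_\beta$ combined with Shioda's $\dim V(\alpha)=1$, and it leaves the ``in particular'' clause implicit. You supply somewhat more detail (equivariance of the residue map, the degree check placing $x^\beta$ in the Griffiths basis, and the monomial-to-$\omega_\beta$ identification for the last clause), all of which is accurate bookkeeping.
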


\begin{proof}
By item (i) of \cref{theoshioda} it is enough to show that $\omega_\beta\in V(\alpha)$ for $\alpha=(a_0,\ldots,a_{n+1})$ with $a_0\cdots a_{n+1}\neq 0$ and $\beta_i=\overline{a_i}-1$. Let $g=(\zeta_d^{c_0},\ldots,\zeta_d^{c_{n+1}})\in G^n_d$, then
$$
g^*\omega_\beta=\zeta_d^{\sum_{j=0}^{n+1}(\beta_j+1)c_j}\omega_\beta=\zeta_d^{\sum_{j=0}^{n+1}a_jc_j}\omega_\beta=\alpha(g)\omega_\beta.
$$
\end{proof}

\begin{remark} Note that the forms $\omega_\beta\in V(\alpha)$ for $\alpha\in\mathcal{B}^n_d$ are not Hodge cycles. In general one can show that $\omega_\beta\in H^{\frac{n}{2},\frac{n}{2}}(X^n_d)_\prim\cap H^n(X^n_d,\overline{\Q})$ assuming the Hodge conjecture.
\end{remark}

\begin{remark}
As a consequence of \cref{theoshioda}, one can show the Hodge conjecture for lots of Fermat varieties \cite{shioda1979hodge} including those of degree $d=3,4,6$. Moreover, by an elementary argument one can characterize these Fermat varieties as those where the group $H^n(X^n_d,\Z)_\text{alg}$ of algebraic cycles has maximal rank $h^{\frac{n}{2},\frac{n}{2}}$. Part of this was already noted by Beauville \cite[Proposition 11]{beauville2014some} and by Movasati \cite[Corollary 15.1]{ho13}. For the sake of completeness we will provide the argument here, starting with an elementary number theory fact which will be also used later in \cref{propvillasmall}.
\end{remark}

\begin{lemma}
\label{lemmaent}
Let $d\ge 5$ and $d\neq 6$ be a integer. Consider
$q:=\min\{p \text{ prime}: \ p\nmid 2d \}$. Then $q<\frac{d}{2}$ or $q=\frac{d+1}{2}$. The second case only holds for $d=5,9$. 
\end{lemma}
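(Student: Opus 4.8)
The plan is to track the prime $q$ directly and to separate the two small cases from the generic one. Since $2 \mid 2d$ always, the minimum is never attained at $p=2$, so $q$ is an odd prime and $q\geq 3$. The engine of the whole argument is the following remark: every prime strictly smaller than $q$ divides $2d$ by minimality of $q$, and being odd it must in fact divide $d$; hence \emph{all odd primes below $q$ divide $d$}. Writing $N$ for the product of all odd primes strictly less than $q$, this gives $N\mid d$ and therefore $d\geq N$.

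First I would dispose of the cases $q=3$ and $q=5$, which are precisely the ones producing the exceptional values. If $q=3$ then $3\nmid d$; here $3<\frac{d}{2}$ as soon as $d\geq 7$, while $d=5$ (the only admissible value below $7$) gives $q=3=\frac{d+1}{2}$. If $q=5$ then $3\mid d$ and $5\nmid d$, so $d$ is a multiple of $3$ with $d\geq 9$; again $5<\frac{d}{2}$ once $d\geq 12$, and the single remaining value $d=9$ yields $q=5=\frac{d+1}{2}$. This already isolates $d=5,9$ as the only candidates for the second alternative and confirms that they do occur.

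The generic case is $q\geq 7$. Here $3$ and $5$ both divide $d$, so $N\geq 15$ and $d\geq 15$, and I claim $2q<N$, which immediately gives $q<\frac{N}{2}\leq\frac{d}{2}$. Listing the odd primes as $q_1=3<q_2=5<q_3=7<\cdots$ and writing $q=q_m$ with $m\geq 3$, so that $N=q_1\cdots q_{m-1}$, the inequality $2q_m<N$ reduces to a short check: for $m=3$ it reads $2\cdot 7=14<15=3\cdot 5$, and for $m\geq 4$ I would write $N=(q_1\cdots q_{m-2})\,q_{m-1}\geq 15\,q_{m-1}>4q_{m-1}>2q_m$, where the last inequality is Bertrand's postulate in the form $q_m<2q_{m-1}$.

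Assembling the pieces, every admissible $d$ falls under the first alternative $q<\frac{d}{2}$ except for $d=5$ and $d=9$, where $q=\frac{d+1}{2}$; and since $q<\frac{d}{2}$ forces $q<\frac{d+1}{2}$, the two alternatives never overlap, so the second holds \emph{only} for these two values. The main obstacle I anticipate is the uniform estimate in the generic case: the crux is bounding the smallest ``missing'' prime $q$ against the primorial $N$ of the primes forced to divide $d$, and this is exactly where an input like Bertrand's postulate (or any explicit lower bound for primorials) is indispensable. By comparison, the small-case bookkeeping and the care with strict inequalities are routine.
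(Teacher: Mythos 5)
Your proof is correct, and it takes a genuinely different route from the paper's. The paper argues by the residue of $d$ modulo $4$: in each case it exhibits an explicit integer near $d/2$ (namely $\tfrac{d}{2}-1$, $\tfrac{d}{2}-2$, $\tfrac{d-1}{2}$, or $\tfrac{d+1}{2}$) that is coprime to $2d$, and takes a prime divisor of it to bound $q$; only in the residual subcase $q=\tfrac{d+1}{2}$ does it invoke the growth of the primorial $p_2\cdots p_{n-1}\mid 2p_n-1$ to isolate $d=5,9$. You instead make the primorial the engine of the entire argument: all odd primes below $q$ divide $d$, so their product $N$ divides $d$, and the estimate $2q<N$ (checked directly for $q=7$ and propagated by Bertrand's postulate in the form $q_m<2q_{m-1}$) settles every case with $q\ge 7$ uniformly, leaving only $q=3,5$ to hand-check --- which is exactly where $d=5$ and $d=9$ emerge. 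Your version is more uniform and avoids the four-way congruence bookkeeping, at the cost of explicitly importing Bertrand's postulate (which the paper's informal ``quickly becomes bigger'' step also implicitly needs to be fully rigorous); the paper's version is more elementary and constructive, producing a concrete small prime non-divisor in each congruence class. Both correctly use the hypothesis $d\neq 6$ (in your argument, to exclude $d=6$ from the $q=5$ case, where neither alternative would hold).
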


\begin{proof}
If $d=4k,$ then $\gcd(2d,\frac{d}{2}-1)=1,$ and therefore every prime $p|\frac{d}{2}-1$ satisfies that $p\nmid 2d$ and $p<\frac{d}{2}.$ Similarly, if $d=4k+2$, then $\gcd(2d,\frac{d}{2}-2)=1$ and we can take $p|\frac{d}{2}-2$. If $d=4k+3$, then $\gcd(2d,\frac{d-1}{2})=1$ and we can take $p|\frac{d-1}{2}$. If $d=4k+1$, then $\gcd(2d,\frac{d+1}{2})=1$ and so taking $p|\frac{d+1}{2}$ we conclude that $q\leq \frac{d+1}{2}$, i.e. $q\le \frac{d+1}{2}-1<\frac{d}{2}$ unless $q=\frac{d+1}{2}$. To see that this only happens for $d=5,9$ note that if $q=p_n$ is the $n$-th prime number, then $p_2\cdots p_{n-1}\mid d=2p_n-1$. One sees that $p_2\cdots p_{n-1}$ quickly becomes bigger than $2p_n-1$ for $n\ge 4$.
\end{proof}

\begin{proposition}
\label{proppicmax}
For even dimensional Fermat varieties $X^n_d$ one has
$$
\text{rank }H^n(X^n_d,\Z)_\text{alg}=h^{\frac{n}{2},\frac{n}{2}} \ \ \text{ if and only if } \ \ \varphi (d)\le 2,\text{ i.e. }d=1,2,3,4,6.
$$
\end{proposition}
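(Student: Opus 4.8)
The plan is to translate everything into Shioda's eigenspace decomposition and reduce the statement to a purely combinatorial comparison of two index sets. Since for even $n$ one has the Lefschetz splitting $H^{\frac n2,\frac n2}(X^n_d)=H^{\frac n2,\frac n2}(X^n_d)_\prim\oplus\C\cdot h^{\frac n2}$, with $h$ the hyperplane class, we get $h^{\frac n2,\frac n2}=\dim H^{\frac n2,\frac n2}(X^n_d)_\prim+1$, and the algebraic lattice always contains the algebraic class $h^{\frac n2}$. Because algebraic cycles are Hodge cycles, $\text{rank }H^n(X^n_d,\Z)_\text{alg}\le \dim\big(H^{\frac n2,\frac n2}\cap H^n(X^n_d,\Z)\big)\otimes\C=|\mathcal{B}^n_d|+1$, while by \cref{theoshioda} one has $\dim H^{\frac n2,\frac n2}(X^n_d)_\prim=\#\{\alpha:\ |\alpha|=\frac n2+1,\ a_0\cdots a_{n+1}\neq0\}$. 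Thus the whole proposition reduces to showing
\[
\mathcal{B}^n_d=\Big\{\alpha\in\hat G^n_d:\ |\alpha|=\tfrac n2+1,\ a_0\cdots a_{n+1}\neq0\Big\}\quad\Longleftrightarrow\quad \varphi(d)\le2 .
\]

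For the implication $\varphi(d)\le2\Rightarrow$ equality I would use that then $(\Z/d\Z)^\times\subseteq\{\pm1\}$, so the only nontrivial condition defining $\mathcal{B}^n_d$ is the one for $t=-1$. Since every $a_i\neq0$ gives $\overline{-a_i}=d-\overline{a_i}$, one has $|-\alpha|=(n+2)-|\alpha|$, so $|\alpha|=\frac n2+1$ forces $|-\alpha|=\frac n2+1$ automatically and the two index sets coincide. This yields $\text{rank}\big(H^{\frac n2,\frac n2}\cap H^n(X^n_d,\Z)\big)=h^{\frac n2,\frac n2}$; the Hodge conjecture for the Fermat varieties of degree $d=3,4,6$ (Shioda \cite{shioda1979hodge}), together with the trivial cases $d=1,2$, then upgrades the rank of Hodge cycles to the rank of algebraic cycles.

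The substance is the converse. Assuming $\varphi(d)>2$ I must exhibit a single $\alpha$ with $a_0\cdots a_{n+1}\neq0$ and $|\alpha|=\frac n2+1$ but $|q\alpha|\neq\frac n2+1$ for some unit $q$; such an $\alpha$ gives $V(\alpha)\subseteq H^{\frac n2,\frac n2}_\prim$ with $V(\alpha)\not\subseteq(H^{\frac n2,\frac n2}\cap H^n(X^n_d,\Z))\otimes\C$, whence $|\mathcal{B}^n_d|<\dim H^{\frac n2,\frac n2}_\prim$ and $\text{rank }H^n(X^n_d,\Z)_\text{alg}\le|\mathcal{B}^n_d|+1<h^{\frac n2,\frac n2}$. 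To build $\alpha$ I would take $q$ to be the prime of \cref{lemmaent}, so that $q<\frac d2$ (the cases $d=5,9$, where $q=\frac{d+1}2$, being checked by hand or by replacing $q$ with the unit $2$). Writing weights through the residues $r_i=\overline{a_i}\in\{1,\dots,d-1\}$, one has $|q\alpha|=q|\alpha|-\sum_i\lfloor qr_i/d\rfloor$, so the weight under $t=q$ is governed by the number of extra multiples of $d$ produced. I then assemble $\alpha$ from $\frac n2-1$ ``stable pairs'' $(1,d-1)$, each contributing weight $1$ for every $t$ because $\{1,d-1\}$ is stable under negation, together with a four-term gadget $(r_1,r_2,r_3,r_4)$ of residue-sum $2d$ (gadget-weight $2$) chosen so that $\sum_i\lfloor qr_i/d\rfloor=2q-1$ rather than the stable value $2(q-1)$: concretely take $r_1,r_2$ in the range $\{d-\lfloor d/q\rfloor,\dots,d-1\}$ (each with $\lfloor qr_i/d\rfloor=q-1$), $r_3=\lceil d/q\rceil$ (with $\lfloor qr_3/d\rfloor=1$) and $r_4=1$, adjusting $r_1,r_2$ so the four residues sum to $2d$. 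The inequality $q<\frac d2$, i.e. $\lfloor d/q\rfloor\ge2$, is precisely what makes this range large enough for the adjustment to be possible. With this choice $|\alpha|=\frac n2+1$ while $|q\alpha|=\frac n2$.

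The main obstacle is exactly this last construction: one must break the negation symmetry of the gadget (any multiset of residues stable under $a\mapsto-a$ automatically satisfies $|t\alpha|=|\alpha|$ for all $t$, hence can never jump) while simultaneously keeping the residue-sum equal to $2d$ and forcing exactly one extra wraparound under multiplication by $q$. Controlling the floors $\lfloor qr_i/d\rfloor$ is where the arithmetic input of \cref{lemmaent} enters, and it is the only non-formal step; everything else is bookkeeping with Shioda's decomposition.
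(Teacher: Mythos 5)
Your proof is correct and follows essentially the same route as the paper: both reduce, via Shioda's theorem together with the Hodge conjecture for $\varphi(d)\le 2$, to the combinatorial statement that $\mathcal{B}^n_d$ equals the full set of characters of weight $\frac{n}{2}+1$ exactly when $\varphi(d)\le 2$, handle the forward direction by noting that only $t=-1$ is in play, and for the converse pad a four-entry character of residue-sum $2d$ with stable pairs $(1,d-1)$ using the prime $q$ of \cref{lemmaent}. The only difference is cosmetic: the paper's gadget is $(aq,bq,cq,2d-(k+1)q)$ tested against $t=q^{-1}$, whereas yours controls the floors $\lfloor qr_i/d\rfloor$ directly and tests against $t=q$; both hinge on the same inequality $q<\frac{d}{2}$ and the same finite check for $d=5,9$.
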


\begin{proof}
Let us note first that if $\varphi(d)\le 2$, we know the Hodge conjecture by \cite{shioda1979hodge} and so it is enough to show, by \cref{theoshioda} (iii), that for all $\alpha\in \hat{G}^n_d$ with $|\alpha|=\frac{n}{2}+1$ one has
\begin{equation}
\label{eqpicmax}
|t\cdot\alpha|=\frac{n}{2}+1 \hspace{1cm}\forall t\in(\Z/d\Z)^\times.    
\end{equation}
This is trivial if $\varphi(d)=1$, and for $\varphi(d)=2$ we have $(\Z/d\Z)^\times=\{1,d-1\}$ where the result is also clear. Conversely, if $\varphi(d)>2$ let us construct some $\alpha\in \hat{G}^n_d$ with $|\alpha|=\frac{n}{2}+1$ not satisfying \eqref{eqpicmax}. Note that if we find such an $\alpha$ for $n=2$, then to construct one for any $n\ge 4$ is easy just adding pairs of entries of the form $(1,d-1)$. Thus we are reduced to the case $n=2$. Let us consider first the case $d\neq 5,9$. By \cref{lemmaent} there exists some $k\in\{2,3,\ldots, d-1\}$ such that 
$$
\frac{d}{k+1}<q<\frac{d}{k} \ \ \text{ where } \ \ q:=\min\{p \text{ prime}: \ p\nmid 2d \}.
$$
We claim the desired character is any
$$
\alpha=(aq,bq,cq,2d-(k+1)q)
$$
such that $a+b+c=k+1$ with $a,b,c\in\{1,2,\ldots,k\}$. In fact, $|\alpha|=2$ but if $t=q^{-1}\in (\Z/d\Z)^\times$ then
$$
|t\cdot\alpha|=|(a,b,c,r)|=\frac{k+1+r}{d}<2.
$$
Finally for the cases $d=5,9$ consider the characters $\alpha=(2,2,2,4), (5,5,5,3)$ respectively and $t=2$.
\end{proof}

Let us turn now to the homology groups of Fermat varieties. For this let us denote
$$
U^n_d:=\{(x_1,\ldots,x_{n+1})\in \C^{n+1} : \ 1+x_1^d+\cdots+x_{n+1}^d=0\}=X^n_d\cap \C^{n+1}
$$
the affine Fermat variety. A basis for $H_n(U^n_d,\Z)$ is given by the so called \textit{vanishing cycles}.

\begin{definition}
For every $\beta\in\{0,\ldots,d-2\}^{n+1}$ consider the homological cycle
$$
\delta_{\beta}:=\sum_{a\in\{0,1\}^{n+1}}(-1)^{\sum_{i=1}^{n+1}(1-a_i)}\Delta_{\beta+a}
$$
where $\Delta_{\beta+a}:\Delta^n:=\{(t_1,\ldots,t_{n+1})\in\R^{n+1}: t_i\ge 0 \ , \ \sum_{i=1}^{n+1}t_i=1\}\rightarrow U^n_d$ is given by
$$
\Delta_{\beta+a}(t):=\left(\zeta_{2d}^{2(\beta_1+a_1)-1}t_1^\frac{1}{d},\zeta_{2d}^{2(\beta_2+a_2)-1}t_2^\frac{1}{d},\ldots,\zeta_{2d}^{2(\beta_{n+1}+a_{n+1})-1}t_{n+1}^\frac{1}{d}\right).
$$
\end{definition}

\begin{proposition}
The set $\{\delta_\beta\}_{\beta\in\{0,\ldots,d-2\}^{n+1}}$ is a basis of $H_n(U^n_d,\Z)$.
\end{proposition}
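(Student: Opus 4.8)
The plan is to realise $U^n_d$ as the Milnor fibre of a Brieskorn--Pham singularity and then to recognise the $\delta_\beta$ as the standard generators of the homology of an associated join. Write $f=x_1^d+\cdots+x_{n+1}^d$, so that $U^n_d=f^{-1}(-1)$; since $f$ is homogeneous with an isolated critical point at the origin, every non-zero fibre of $f$ is diffeomorphic to the Milnor fibre. By Milnor's fibration theorem the fibre is $(n-1)$-connected and homotopy equivalent to a bouquet of $n$-spheres, the number of spheres being the Milnor number, which for a Brieskorn--Pham singularity equals $\prod_{i=1}^{n+1}(d-1)=(d-1)^{n+1}$. Consequently $H_n(U^n_d,\Z)$ is free abelian of rank $(d-1)^{n+1}$, the lower reduced homology vanishes, and $H_i(U^n_d,\Z)=0$ for $i>n$ because $U^n_d$ is a smooth affine variety of complex dimension $n$.

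Next I would count the proposed cycles: $\#\{0,\ldots,d-2\}^{n+1}=(d-1)^{n+1}$, which is exactly the rank computed above. Hence it suffices to show that the classes $[\delta_\beta]$ generate $H_n(U^n_d,\Z)$; being a free group of the right rank, any generating set of this cardinality is automatically a $\Z$-basis. Before that one checks that each $\delta_\beta$ is a cycle: the alternating sign $(-1)^{\sum_i(1-a_i)}$ is arranged so that, upon taking the singular boundary, the $(n-1)$-faces of $\Delta_{\beta+a}$ and of $\Delta_{\beta+a+e_i}$ coincide as chains but occur with opposite sign, so all boundary terms telescope to zero. One also verifies directly that each $\Delta_{\beta+a}$ maps into $U^n_d$, since $x_i^d=\zeta_{2d}^{d(2(\beta_i+a_i)-1)}t_i=-t_i$ and $\sum_i t_i=1$.

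The core of the argument is Pham's description of the homotopy type. For each coordinate let $P_i=\{\zeta_{2d}^{2k-1}:k=1,\ldots,d\}$ be the set of $d$-th roots of $-1$, a discrete space of $d$ points, and form the join $J:=P_1*\cdots*P_{n+1}$. The reduced homology of a join of discrete sets is concentrated in the top degree, with $\tilde H_n(J)\cong\tilde H_0(P_1)\otimes\cdots\otimes\tilde H_0(P_{n+1})\cong\Z^{(d-1)^{n+1}}$; a $\Z$-basis is given by tensoring the $0$-cycles $[\zeta_{2d}^{2(\beta_i+1)-1}]-[\zeta_{2d}^{2\beta_i-1}]$ over $\beta\in\{0,\ldots,d-2\}^{n+1}$. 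The natural map $J\to U^n_d$, sending the top simplex spanned by the vertices $\zeta_{2d}^{2(\beta_i+a_i)-1}$ to $\Delta_{\beta+a}$, carries each of these join generators precisely to the chain $\delta_\beta$. Invoking Pham's theorem that this map is a homotopy equivalence (equivalently, that $U^n_d$ deformation retracts onto the image of $J$), I would conclude that $\{[\delta_\beta]\}$ is the image of an integral basis of $\tilde H_n(J)$, hence an integral basis of $H_n(U^n_d,\Z)$.

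The main obstacle is this last topological input: building the deformation retraction of $U^n_d$ onto the Pham join requires flowing the phases of the coordinates $x_i$ to the prescribed roots of $-1$ while preserving the equation $\sum_i x_i^d=-1$. A more computational alternative, which avoids the retraction but only yields a $\Q$-basis, is to prove the $\Q$-linear independence of the $[\delta_\beta]$ through the period pairing against the Griffiths basis $\omega_{\beta'}=\res\!\left(\frac{x^{\beta'}\Omega}{F^{\frac{n}{2}+1}}\right)$. Substituting $x_i=\zeta_{2d}^{2(\beta_i+a_i)-1}t_i^{1/d}$ factorises each period over the coordinates: the sum over $a_i\in\{0,1\}$ contributes the factor $\zeta_{2d}^{(2\beta_i-1)(\beta_i'+1)}(\zeta_d^{\beta_i'+1}-1)$, which is non-zero because $\beta_i'+1\in\{1,\ldots,d-1\}$, while the $t$-integral is a non-zero Dirichlet integral contributing an overall factor depending only on $\beta'$. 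Thus the period matrix is a Kronecker product of matrices of the form $\big(\zeta_{2d}^{(2\beta_i-1)(\beta_i'+1)}(\zeta_d^{\beta_i'+1}-1)\big)_{\beta_i,\beta_i'}$, each an invertible Vandermonde matrix in the distinct nodes $\zeta_{2d}^{2\beta_i-1}$ up to non-zero column scalings, so the period matrix is invertible. This forces $\Q$-independence and, together with the rank count from Milnor's theorem, gives a $\Q$-basis; upgrading to an integral basis still needs the join model or a unimodularity check on the intersection form, which is why I regard the Pham retraction as the essential step.
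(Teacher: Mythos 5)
Your argument is correct, and it is essentially the standard one: the paper itself gives no proof of this proposition, deferring to \cite[Remark 7.1]{ho13}, and the content of that reference is precisely the Pham join description of the Milnor fibre of the Brieskorn--Pham singularity that you reproduce. Your bookkeeping checks out --- the two phases $\zeta_{2d}^{2\beta_i-1}$ and $\zeta_{2d}^{2\beta_i+1}$ occurring in $\Delta_{\beta+a}$ for $a_i\in\{0,1\}$, weighted by $(-1)^{\sum_i(1-a_i)}$, realise exactly the tensor product of the $0$-cycles $[\zeta_{2d}^{2\beta_i+1}]-[\zeta_{2d}^{2\beta_i-1}]$, which form a $\Z$-basis of $\tilde H_0(P_i)$ --- and you correctly identify the one nontrivial external input, Pham's theorem that $U^n_d$ deformation retracts onto the join, without which the period computation only yields a $\Q$-basis.
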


\begin{proof}
This is a well-known fact. For a proof see for instance \cite[Remark 7.1]{ho13}.
\end{proof}

Using the Leray-Thom-Gysin sequence in homology \cite[\S 4.6]{ho13} is easy to see that
\begin{equation}
\label{lefsdescrathom}    H_n(X^n_d,\Q)=\text{Im}(H_n(U^n_d,\Q)\rightarrow H_n(X^n_d,\Q))\oplus\Q\cdot[\P^{\frac{n}{2}+1}\cap X^n_d].
\end{equation}
Hence every $\omega\in H^n_\dR(X^n_d)$ is determined by its periods over the vanishing cycles and $[\P^{\frac{n}{2}+1}\cap X^n_d]$. Since this last period is zero when $\omega\in H^n_\dR(X^n_d)_\prim$, we see that every primitive class is determined by its periods over all vanishing cycles. These periods can be explicitly computed following \cite{dmos} (see \cref{periods}).

\section{Galois action in cohomology}
\label{sec3}
Let $X\subseteq \P^{n+1}$ be a smooth hypersurface of the projective space.

\begin{definition}
\label{fielddef}
For every $\omega\in H^n_\dR(X)$, the \textit{field of definition} of $\omega$ is
$$
\Q_\omega:=\Q\left(\frac{1}{(2\pi i)^\frac{n}{2}}\int_\delta\omega \ : \ \delta\in H_n(X,\Z)\right).
$$
Since $H_n(X,\Z)$ is finitely generated, $\Q_\omega$ is also finitely generated. This is the field of definition of $\omega$ in the following sense
$$
\omega\in H^n(X,\Q_\omega).
$$
\end{definition}

\begin{definition}
For every $t\in \text{Gal}(\Q_\omega/\Q)$ we define the \textit{Galois action in cohomology} as $t(\omega)\in H^n(X,\Q_\omega)$ such that $$ 
t\left(\frac{1}{(2\pi i )^\frac{n}{2}}\int_\delta \omega\right)=\frac{1}{(2\pi i)^\frac{n}{2}}\int_\delta t(\omega) \ , \ \ \ \forall \delta\in H_n(X,\Z). 
$$ 
\end{definition}

In order to describe the Galois action in the cohomology of Fermat varieties we will use the following elementary result about periods, whose proof can be found in \cite[Lemma 7.12]{dmos} and \cite[Proposition 15.1]{ho13}.

\begin{proposition}
\label{periods}
For a Fermat variety of degree $d$ and even dimension $n$, let $\omega_\beta\in H^{\frac{n}{2},\frac{n}{2}}(X^n_d)_\prim$ and $\beta'\in\{0,\ldots,d-2\}^{n+1}$. Then
$$
\int_{\delta_{\beta'}}\omega_\beta=\frac{1}{d^{n+1}\frac{n}{2}!(2\pi i)}\prod_{i=0}^{n+1}(\zeta_d^{(\beta_i+1)(\beta'_i+1)}-\zeta_d^{(\beta_i+1)\beta'_i})\Gamma\left(\frac{\beta_i+1}{ d}\right)
$$
where $\beta_0':=0$ and $\Gamma$ is the classical Gamma function.
\end{proposition}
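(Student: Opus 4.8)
The plan is to compute the period directly from the explicit parametrization of the vanishing cycle $\delta_{\beta'}$, exploiting the diagonal (product) structure of the Fermat equation to separate the $n$-dimensional integral into one-dimensional pieces. First I would pass to the affine chart $\{x_0=1\}$, where $X^n_d$ becomes $U^n_d=\{f=0\}$ with $f=1+x_1^d+\cdots+x_{n+1}^d$, and represent $\omega_\beta$ through its Griffiths residue coming from \eqref{griffbasdesc}. Since the pole has order $\tfrac n2+1$, I would extract the class of $\tfrac{x^\beta\Omega}{F^{n/2+1}}$ via the Leray coboundary (tube) map $\tau$, using
\[
\int_{\tau\gamma}\eta = 2\pi i \int_\gamma \res(\eta),\qquad \gamma\in H_n(X^n_d,\Z).
\]
This introduces the normalizing constant $\tfrac{1}{2\pi i}$ and reduces the period to an integral over the affine chains making up $\delta_{\beta'}$, while the order-$\tfrac n2$ pole accounts for the eventual $\tfrac{1}{(n/2)!}$ factor produced in the final Gamma-integral step.

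Next I would pull back along the defining simplices $\Delta_{\beta'+a}(t)=\bigl(\zeta_{2d}^{2(\beta_i'+a_i)-1}t_i^{1/d}\bigr)_i$. Because each coordinate satisfies $x_i^d=-t_i$, the monomial differential $x^\beta\,dx_1\wedge\cdots\wedge dx_{n+1}$ separates into a product whose $i$-th factor is
\[
x_i^{\beta_i}\,dx_i=\tfrac1d\,\zeta_{2d}^{(2(\beta_i'+a_i)-1)(\beta_i+1)}\,t_i^{(\beta_i+1)/d-1}\,dt_i .
\]
This is where the constant $\tfrac{1}{d^{n+1}}$ (one factor of $\tfrac1d$ per affine coordinate) and the exponents $\tfrac{\beta_i+1}{d}$ feeding the Gamma functions originate; the phase $\zeta_{2d}^{(2(\beta_i'+a_i)-1)(\beta_i+1)}$ is constant along each simplex and factors out of the integral.

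I would then carry out the alternating sum $\sum_{a\in\{0,1\}^{n+1}}(-1)^{\sum_i(1-a_i)}$. Since the integrand is separated, this sum factors coordinatewise, and in the $i$-th slot it collapses to $\zeta_{2d}^{(2\beta_i'+1)(\beta_i+1)}-\zeta_{2d}^{(2\beta_i'-1)(\beta_i+1)}$, which after normalizing the branch of $t_i^{1/d}$ and absorbing the $i=0$ slot (where $\beta_0'=0$, $x_0=1$) matches the stated difference $\zeta_d^{(\beta_i+1)(\beta_i'+1)}-\zeta_d^{(\beta_i+1)\beta_i'}$. What remains is the Dirichlet integral of the separated monomial over the standard simplex,
\[
\int_{\Delta^n}\prod_i t_i^{s_i-1}=\frac{\prod_i\Gamma(s_i)}{\Gamma\bigl(\sum_i s_i\bigr)},\qquad s_i=\tfrac{\beta_i+1}{d}.
\]
By \cref{theoshioda} and \cref{proprelGrSh}, the hypothesis $\omega_\beta\in H^{\frac n2,\frac n2}(X^n_d)_\prim$ is exactly the constraint $\sum_{i=0}^{n+1}\tfrac{\beta_i+1}{d}=\tfrac n2+1$, so the denominator becomes $\Gamma(\tfrac n2+1)=\tfrac n2!$, producing at once the $\tfrac{1}{(n/2)!}$ and the $n+2$ numerator factors $\prod_{i=0}^{n+1}\Gamma\bigl(\tfrac{\beta_i+1}{d}\bigr)$, with the $i=0$ Gamma supplied by the residue in the $x_0$-direction.

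I expect the main obstacle to be the careful bookkeeping around the higher-order pole: justifying the precise constant $\tfrac{1}{(n/2)!\,(2\pi i)}$ arising from the order-$(\tfrac n2+1)$ residue rather than a simple Poincaré residue, and tracking the fractional-power branches of $t_i^{1/d}$ together with the orientation signs of the simplices so that the coordinatewise alternating sums assemble into exactly $\zeta_d^{(\beta_i+1)(\beta_i'+1)}-\zeta_d^{(\beta_i+1)\beta_i'}$ and not merely up to an overall root of unity. Everything else is a routine separation-of-variables computation and an application of the Dirichlet (Beta-type) integral, the nontrivial input being that the Hodge-type condition forces the Dirichlet denominator to collapse to the integer $\tfrac n2!$.
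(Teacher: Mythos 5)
The paper itself does not prove \cref{periods}: it defers entirely to \cite[Lemma 7.12]{dmos} and \cite[Proposition 15.1]{ho13}, and your outline is precisely the computation carried out in those sources --- restrict to the affine chart, pull back along the simplices $\Delta_{\beta'+a}$, separate variables, evaluate a Dirichlet integral, and use the degree condition $\sum_{i=0}^{n+1}\tfrac{\beta_i+1}{d}=\tfrac n2+1$ to identify the denominator. So the strategy is the correct and standard one.

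However, the two points you flag as ``the main obstacle'' are genuine gaps rather than bookkeeping, and they matter here: the proposition is used later (in \cref{propboundfd} and \cref{galact}) to pin down the exact elements $c_\lambda,c_i\in\Q(\zeta_{2d})$, so an answer correct only up to a root of unity would not suffice. Concretely: (i) the Dirichlet integral over $\Delta^n$ involves only the $n+1$ variables $t_1,\dots,t_{n+1}$, so as written it produces $\Gamma\bigl(\sum_{i=1}^{n+1}\tfrac{\beta_i+1}{d}\bigr)=\Gamma\bigl(\tfrac n2+1-\tfrac{\beta_0+1}{d}\bigr)$ in the denominator, not $\tfrac n2!$, and it supplies no $\Gamma\bigl(\tfrac{\beta_0+1}{d}\bigr)$ at all; the missing Beta factor $\Gamma\bigl(\tfrac{\beta_0+1}{d}\bigr)\Gamma\bigl(\sum_{i\ge1}\tfrac{\beta_i+1}{d}\bigr)/\Gamma\bigl(\tfrac n2+1\bigr)$, together with the $i=0$ factor $\zeta_d^{\beta_0+1}-1$, has to be extracted from the order-$(\tfrac n2+1)$ pole via the tube in the $x_0$-direction, and you only assert this. (ii) The coordinatewise alternating sum gives $\zeta_d^{\beta_i'(\beta_i+1)}\bigl(\zeta_{2d}^{\beta_i+1}-\zeta_{2d}^{-(\beta_i+1)}\bigr)$, which differs from the stated factor $\zeta_d^{(\beta_i+1)\beta_i'}\bigl(\zeta_d^{\beta_i+1}-1\bigr)$ by $\zeta_{2d}^{\beta_i+1}$ in each slot; the product of these discrepancies over $i=1,\dots,n+1$ equals $(-1)^{\frac n2+1}\zeta_{2d}^{-(\beta_0+1)}$ and must cancel exactly against the sign and phase produced in step (i). Until both are closed, the constant in front of the Gamma product is not established; either write out the $x_0$-direction tube computation or, as the authors do, cite \cite{dmos} or \cite{ho13} for it.
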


Using the above formula one can obtain the following elementary result which can also be found as part of \cite[Theorem 7.15]{dmos}.

\begin{proposition}
\label{propomegaprima}
For every character $\alpha=(a_0,\ldots,a_{n+1})$ with $a_0\cdots a_{n+1}\neq 0$,
$$
V(\alpha)\cap H^n(X^n_d,\Q(\zeta_d))\neq 0.
$$
In fact a generator is
$$
\eta_\alpha:=(2\pi i)^{\frac{n}{2}+1}\frac{\omega_\beta}{\prod_{i=0}^{n+1}\Gamma\left(\frac{a_i}{d}\right)}\in H^n(X^n_d,\Q(\zeta_d))_\prim,
$$
for $\beta_i=a_i-1$. Moreover, for every $t\in (\Z/d\Z)^\times\simeq\text{Gal}(\Q(\zeta_d)/\Q)$  $$ t(\eta_\alpha)=\eta_{t\cdot\alpha}. 
$$ 
\end{proposition}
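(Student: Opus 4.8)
The plan is to prove Proposition \ref{propomegaprima} in three stages: first establishing that $\eta_\alpha$ lies in $V(\alpha)$, then verifying its field of definition is contained in $\Q(\zeta_d)$, and finally computing the Galois action. Since Proposition \ref{proprelGrSh} already identifies $V(\alpha)=\C\cdot\omega_\beta$ with $\beta_i=\overline{a_i}-1=a_i-1$ (using $a_i\in\{1,\ldots,d-1\}$ when $a_0\cdots a_{n+1}\neq 0$), the class $\eta_\alpha$ is by construction a nonzero scalar multiple of $\omega_\beta$, hence automatically a generator of $V(\alpha)$. The content of the statement is therefore arithmetic: that with the specific normalizing factor $(2\pi i)^{\frac{n}{2}+1}/\prod_i\Gamma(a_i/d)$, the periods of $\eta_\alpha$ become algebraic numbers in $\Q(\zeta_d)$.

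First I would compute the periods of $\eta_\alpha$ over the vanishing cycles. By the discussion following \eqref{lefsdescrathom}, a primitive class is determined by its periods over the $\delta_{\beta'}$, so it suffices to control $\int_{\delta_{\beta'}}\eta_\alpha$ for all $\beta'\in\{0,\ldots,d-2\}^{n+1}$. Substituting the formula of \cref{periods} and the definition of $\eta_\alpha$, the Gamma factors $\prod_i\Gamma\left(\frac{\beta_i+1}{d}\right)=\prod_i\Gamma\left(\frac{a_i}{d}\right)$ cancel exactly against the normalizing denominator, and the factor $(2\pi i)^{\frac{n}{2}+1}$ cancels the $(2\pi i)^{-1}$ from \cref{periods} up to the power $(2\pi i)^{\frac{n}{2}}$ built into \cref{fielddef}. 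What remains is a product of differences of roots of unity
$$
\frac{1}{d^{n+1}\frac{n}{2}!}\prod_{i=0}^{n+1}\left(\zeta_d^{a_i(\beta'_i+1)}-\zeta_d^{a_i\beta'_i}\right),
$$
which manifestly lies in $\Q(\zeta_d)$. This shows $\Q_{\eta_\alpha}\subseteq\Q(\zeta_d)$ and hence $\eta_\alpha\in H^n(X^n_d,\Q(\zeta_d))_\prim$, giving the first two claims.

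For the Galois action, I would take $t\in(\Z/d\Z)^\times$ identified with the automorphism of $\Q(\zeta_d)/\Q$ sending $\zeta_d\mapsto\zeta_d^t$, and apply it directly to the period expression above. Since $t$ acts on each factor $\zeta_d^{a_i(\beta'_i+1)}-\zeta_d^{a_i\beta'_i}$ by raising $\zeta_d$ to the $t$-th power, and the rational prefactor $1/(d^{n+1}\frac{n}{2}!)$ is fixed, the image is precisely the corresponding period expression with $a_i$ replaced by $ta_i$, i.e. with $\alpha$ replaced by $t\cdot\alpha$. This is exactly the period of $\eta_{t\cdot\alpha}$ over the same cycle $\delta_{\beta'}$. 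Because a primitive class is determined by its periods over all vanishing cycles, matching them for every $\beta'$ forces $t(\eta_\alpha)=\eta_{t\cdot\alpha}$ as cohomology classes.

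I expect the only genuine subtlety to be bookkeeping rather than a conceptual obstacle: one must confirm that the normalization in \cref{fielddef} (dividing by $(2\pi i)^{\frac{n}{2}}$) combines with the $(2\pi i)^{\frac{n}{2}+1}$ in the definition of $\eta_\alpha$ and the lone $(2\pi i)^{-1}$ in \cref{periods} so that all transcendental factors disappear cleanly, and that the residues $\overline{a_i}$ versus the literal representatives $a_i$ are handled consistently when passing to $t\cdot\alpha$ (the exponents of $\zeta_d$ only matter modulo $d$, so replacing $a_i$ by $\overline{ta_i}$ is harmless). Once these normalization constants are tracked, the proof reduces to the elementary observation that Galois acts on products of roots of unity by permuting exponents, which is what makes the clean formula $t(\eta_\alpha)=\eta_{t\cdot\alpha}$ fall out.
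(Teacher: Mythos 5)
Your proof is correct and follows exactly the route the paper intends: its own proof is the one-line remark that the claim ``follows directly from the definition of the action, Proposition~\ref{periods} and Theorem~\ref{theoshioda}'', and your computation (cancellation of the $\Gamma$-factors and powers of $2\pi i$ against the normalizations, leaving the period $\frac{1}{d^{n+1}\frac{n}{2}!}\prod_{i}(\zeta_d^{a_i(\beta'_i+1)}-\zeta_d^{a_i\beta'_i})\in\Q(\zeta_d)$ on which Galois acts by $a_i\mapsto ta_i$) is precisely the omitted verification. The bookkeeping points you flag are handled correctly, so nothing is missing.
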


\begin{proof}
This follows directly from the definition of the action, \cref{periods} and \cref{theoshioda}.
\end{proof}

\begin{definition}
We say that a character $\alpha\in \mathcal{B}^n_d$ is \textit{totally decomposable} if we can relabel the entries of $\alpha$ in such a way that 
\begin{equation}
\label{totdecchar}    
\alpha=(a_0,d-a_0,a_2,d-a_2,\ldots,a_n,d-a_n).
\end{equation}

\begin{remark}
The polynomial $P_\lambda$ given by \eqref{plambda} is a $\C$-linear combination of the monomials $x^\beta$  with $\beta_{2j-2}+\beta_{2j-1}=d-2$  for $j=1,\dots,\frac{n}{2}+1.$ Each of these $\beta$'s has an associated character $\alpha\in \mathcal{B}^n_d$ that is totally decomposable with $a_j=\beta_j+1.$ In the following proposition we restrict the field of definition of $\omega_\beta=\res\left(\frac{x^\beta\Omega}{F^{\frac{n}{2}+1}}\right)$ where $\beta$ has associated character $\alpha$ totally decomposable.     
\end{remark}

\end{definition}

\begin{proposition}
\label{galact}
For every $\alpha=(a_0,a_1,\ldots,a_n,a_{n+1})\in\mathcal{B}^n_d$ totally decomposable of the form \eqref{totdecchar}, and $\beta_i=a_i-1$, 
$$
\Q_{\omega_\beta}\subseteq\Q(\zeta_{2d}).
$$
Furthermore for every $t\in\Gal(\Q(\zeta_{2d})/\Q)\simeq(\Z/2d\Z)^\times$
$$
t(\omega_\beta)=(-1)^\frac{\sum_{j=1}^{\frac{n}{2}+1}(ta_{2j-2}-\overline{ta_{2j-2}})}{d}\omega_\gamma,
$$
where $\omega_\gamma\in V(t\cdot\alpha)$ and $\overline{a}$ denotes the residue of $a\in\Z$ modulo $d$. 
\end{proposition}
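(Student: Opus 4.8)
The plan is to leverage \cref{propomegaprima}, which already provides the rational generator $\eta_\alpha\in H^n(X^n_d,\Q(\zeta_d))_\prim$ of $V(\alpha)$ together with its Galois action $t(\eta_\alpha)=\eta_{t\cdot\alpha}$ over $\Q(\zeta_d)$, and to upgrade this to $\Q(\zeta_{2d})$ by tracking the Gamma factors relating $\omega_\beta$ to $\eta_\alpha$. The whole point is that total decomposability forces those transcendental factors to collapse into roots of unity.

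First I would simplify the product of Gamma factors appearing in $\eta_\alpha=(2\pi i)^{\frac{n}{2}+1}\omega_\beta/\prod_{i=0}^{n+1}\Gamma(a_i/d)$. Because $\alpha$ is totally decomposable the factors pair up as $\Gamma(a_{2j-2}/d)\Gamma((d-a_{2j-2})/d)$, and Euler's reflection formula $\Gamma(z)\Gamma(1-z)=\pi/\sin(\pi z)$ together with $\sin(\pi a/d)=(\zeta_{2d}^{a}-\zeta_{2d}^{-a})/(2i)$ turns each pair into $2\pi i/(\zeta_{2d}^{a_{2j-2}}-\zeta_{2d}^{-a_{2j-2}})$. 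The $\frac{n}{2}+1$ copies of $2\pi i$ cancel against the normalization, producing the clean identity
$$
\omega_\beta=\frac{\eta_\alpha}{\prod_{j=1}^{\frac{n}{2}+1}\left(\zeta_{2d}^{a_{2j-2}}-\zeta_{2d}^{-a_{2j-2}}\right)}.
$$
Each denominator factor is nonzero (since $0<a_{2j-2}<d$) and lies in $\Q(\zeta_{2d})$, while $\eta_\alpha$ is defined over $\Q(\zeta_d)$; hence $\omega_\beta\in H^n(X^n_d,\Q(\zeta_{2d}))$, i.e.\ $\Q_{\omega_\beta}\subseteq\Q(\zeta_{2d})$, settling the first assertion.

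For the Galois action I would apply $t\in\Gal(\Q(\zeta_{2d})/\Q)\simeq(\Z/2d\Z)^\times$ to the displayed identity. The action is semilinear on the periods, so it applies $t$ to the scalar via $\zeta_{2d}\mapsto\zeta_{2d}^{t}$, and acts on $\eta_\alpha$ only through its restriction to $\Q(\zeta_d)$, which is reduction $t\bmod d$; thus $t(\eta_\alpha)=\eta_{t\cdot\alpha}$ by \cref{propomegaprima}. Since $t(d-a)\equiv d-\overline{ta}\pmod d$, the character $t\cdot\alpha$ is again totally decomposable, so the same reflection-formula identity rewrites $\eta_{t\cdot\alpha}$ in terms of $\omega_\gamma\in V(t\cdot\alpha)$ with $\gamma_i=\overline{ta_i}-1$ (using \cref{proprelGrSh}), introducing the residue-valued factors $\zeta_{2d}^{\overline{ta_{2j-2}}}-\zeta_{2d}^{-\overline{ta_{2j-2}}}$. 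Collecting everything gives
$$
t(\omega_\beta)=\omega_\gamma\prod_{j=1}^{\frac{n}{2}+1}\frac{\zeta_{2d}^{\overline{ta_{2j-2}}}-\zeta_{2d}^{-\overline{ta_{2j-2}}}}{\zeta_{2d}^{ta_{2j-2}}-\zeta_{2d}^{-ta_{2j-2}}}.
$$

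The decisive final step is to extract the sign from this ratio. Writing $ta_{2j-2}=\overline{ta_{2j-2}}+d\cdot\frac{ta_{2j-2}-\overline{ta_{2j-2}}}{d}$ and using $\zeta_{2d}^{d}=-1$, each factor $\zeta_{2d}^{\pm ta_{2j-2}}$ acquires the sign $(-1)^{(ta_{2j-2}-\overline{ta_{2j-2}})/d}$, so the $j$-th ratio equals $(-1)^{(ta_{2j-2}-\overline{ta_{2j-2}})/d}$; multiplying over $j$ reproduces exactly the claimed formula. I expect no genuine obstacle here, only care points: verifying that the Galois action is semilinear on scalar multiples of rational classes (so $t$ distributes over the product), confirming that $t\cdot\alpha$ stays totally decomposable after reduction modulo $d$, and checking the residue bookkeeping so that each exponent $(ta_{2j-2}-\overline{ta_{2j-2}})/d$ is the intended integer.
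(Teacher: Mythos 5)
Your proof is correct, but it takes precisely the route the paper's proof deliberately sidesteps. For the containment $\Q_{\omega_\beta}\subseteq\Q(\zeta_{2d})$ the paper notes that ``this could be shown directly by using the properties of the Gamma function, but we will give another proof'': instead of Euler's reflection formula it invokes the rationality of the explicit linear cycle class $\omega_P=\frac{-1}{\frac{n}{2}!\cdot d^{n/2}}[\P^{\frac{n}{2}}]_\prim$ computed in \cite{villaflor2021periods}, deduces $C_\beta\in\Q(\zeta_{2d})$ by showing $\sigma(C_\beta)=C_\beta$ for all $\sigma\in\Gal(K/\Q(\zeta_{2d}))$, and then extracts the sign in the transformation law by matching coefficients in the identity $t(\omega_P)=\omega_P$. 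Your explicit evaluation $C_\beta=\prod_{j}\bigl(\zeta_{2d}^{a_{2j-2}}-\zeta_{2d}^{-a_{2j-2}}\bigr)^{-1}$ is exactly the content of \cref{rmkcebeta}, and the remaining steps of your argument --- semilinearity of the Galois action on scalar multiples of classes, $t(\eta_\alpha)=\eta_{t\cdot\alpha}$ via restriction of $t$ to $\Q(\zeta_d)$ (i.e.\ reduction mod $d$), preservation of total decomposability under $t$, and the sign $(-1)^{(ta_{2j-2}-\overline{ta_{2j-2}})/d}$ coming from $\zeta_{2d}^{d}=-1$ --- are all sound; the only bookkeeping worth making explicit is that $\overline{ta_{2j-2}}\neq 0$ (since $\gcd(t,d)=1$ and $0<a_{2j-2}<d$), so each exponent is a genuine nonnegative integer and $\gamma_i=\overline{ta_i}-1$ is a legitimate multi-index. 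What your approach buys is self-containedness: it needs nothing beyond \cref{periods} and \cref{propomegaprima}, whereas the paper leans on the external period computation for linear cycles. What the paper's approach buys is that a single invariance trick yields both the field bound and the sign simultaneously, and it foreshadows the Galois-cohomological mechanism exploited in \S\ref{sec5}.
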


\begin{proof}
Consider the class of the linear cycle $\P^\frac{n}{2}=\{x_0-\zeta_{2d}x_1=\cdots=x_n-\zeta_{2d}x_{n+1}=0\}$. Then by \cite[Theorem 1.1]{villaflor2021periods} and \cref{theoshioda} we know that
$$
\omega_P=\frac{-1}{\frac{n}{2}!\cdot d^\frac{n}{2}}[\P^\frac{n}{2}]_\prim\in H^{\frac{n}{2},\frac{n}{2}}(X^n_d)_\prim\cap H^n(X^n_d,\Q),
$$
where
$$
P=\zeta_{2d}^{\frac{n}{2}+1}\sum_{\beta\in I}x^\beta\zeta_{2d}^{\beta_1+\beta_3+\cdots+\beta_{n+1}}
$$
and 
$$
I:=\left\{(\beta_0,\ldots,\beta_{n+1})\in\{0,\ldots,d-2\}^{n+2}: \beta_{2j-2}+\beta_{2j-1}=d-2 \ , \ \forall j=1,\ldots,\frac{n}{2}+1\right\}.
$$
Let us first show that $\Q_{\omega_\beta}\subseteq\Q(\zeta_{2d})$. Since $\Q_{\eta_\alpha}\subseteq\Q(\zeta_d)$ it is enough to show that
$$
C_\beta:=\frac{\prod_{i=0}^{n+1}\Gamma\left(\frac{a_i}{d}\right)}{(2\pi i)^{\frac{n}{2}+1}}\in\Q(\zeta_{2d}).
$$
This could be shown directly by using the properties of the Gamma function, but we will give another proof. Let $K/\Q(\zeta_{2d})$ be a Galois extension such that $C_\beta\in K$. For any $\sigma\in \Gal(K/\Q(\zeta_{2d}))$ we have $\sigma(\omega_P)=\omega_P$, since it is a rational class. Hence by \cref{propomegaprima}
$$
\sum_{\beta\in I}\zeta_{2d}^{a_1+a_3+\cdots+a_{n+1}}\sigma(C_\beta)\cdot\eta_\alpha=\sum_{\beta\in I}\zeta_{2d}^{a_1+a_3+\cdots+a_{n+1}}C_\beta\cdot\eta_\alpha,
$$
in other words $\sigma(C_\beta)=C_\beta$ for all $\sigma\in\Gal(K/\Q(\zeta_{2d}))$, i.e. $C_\beta\in\Q(\zeta_{2d})$ as claimed. Let us now compute the Galois action of $\Gal(\Q(\zeta_{2d})/\Q)$ on $\omega_\beta$. Let $t\in \Gal(\Q(\zeta_{2d})/\Q)\simeq(\Z/2d\Z)^\times$, then again $t(\omega_P)=\omega_P$, since $\omega_P$ is a rational class. Expanding this equality we have
$$
\sum_{\beta\in I}\zeta_{2d}^{t(a_1+a_3+\cdots+a_{n+1})}t(\omega_\beta)=\sum_{\beta\in I}\zeta_{2d}^{a_1+a_3+\cdots+a_{n+1}}\omega_\beta.
$$
Since by \cref{propomegaprima} we know $t(\omega_\beta)=C\cdot\omega_\gamma$ for some $C\in\Q(\zeta_{2d})^\times$ and $\omega_\gamma\in V(t\cdot\alpha)$, we get that
$$
\zeta_{2d}^{t(a_1+a_3+\cdots+a_{n+1})}t(\omega_\beta)=\zeta_{2d}^{\overline{ta_1}+\overline{ta_3}+\cdots+\overline{ta_{n+1}}}\omega_\gamma
$$
and the result follows. For the last equality just note that $t(\omega_\beta)=t(C_\beta)\cdot\eta_{t\cdot\alpha}$.
\end{proof}

\begin{remark}
\label{rmkcebeta}
Using Euler's reflection formula we can compute explicitly 
$$
C_\beta=\frac{\prod_{j=1}^{\frac{n}{2}+1}\Gamma\left(\frac{a_{2j-2}}{d}\right)\Gamma\left(1-\frac{a_{2j-2}}{d}\right)}{(2\pi i)^{\frac{n}{2}+1}}=\frac{\prod_{j=1}^{\frac{n}{2}+1}\frac{\pi}{\sin{(\pi a_{2j-2}/d)}}}{(2\pi i )^{\frac{n}{2}+1}}=\prod_{j=1}^{\frac{n}{2}+1}\frac{1}{\zeta_{2d}^{a_{2j-2}}-\zeta_{2d}^{-a_{2j-2}}}.
$$
\end{remark}

\section{Artinian Gorenstein ideal associated to a Hodge cycle}
\label{sec4}
For the sake of completeness we will briefly recall some known facts about Artinian Gorenstein ideals associated to Hodge cycles in smooth hypersurfaces of the projective space. Our main aim is to settle the notation we will use in the rest of the article and to gather some facts from \cite{villa2020small}.

\begin{definition}
A graded $\C$-algebra $R$ is \textit{Artinian Gorenstein} if there exist $\sigma\in \N$ such that 
\begin{itemize}
    \item[(i)] $R_e=0\text{ for all }e>\sigma$,
    \item[(ii)] $\dim_\C \ R_\sigma=1$,
    \item[(iii)] $\text{the multiplication map }R_i\times R_{\sigma-i}\rightarrow R_\sigma\text{ is a perfect pairing for all }i=0,\ldots,\sigma.$
\end{itemize}
The number $\sigma=:\text{soc}(R)$ is the \textit{socle of $R$}. We say that an ideal $I\subseteq\C[x_0,\ldots,x_{n+1}]$ is \textit{Artinian Gorenstein of socle} $\sigma=:\text{soc}(I)$ if the quotient ring $R=\C[x_0,\ldots,x_{n+1}]/I$ is Artinian Gorenstein of socle $\sigma$.
\end{definition}

The definition of the following ideal appeared first in the work of Voisin \cite{voisin89} for surfaces, and later in the work of Otwinowska \cite{Otwinowska2003} for higher dimensional varieties.

\begin{definition}
Let $X=\{F=0\}\subseteq\P^{n+1}$ be a smooth degree $d$ hypersurface of even dimension $n$, and $\lambda\in H^{\frac{n}{2},\frac{n}{2}}(X,\Z)$ be a non-trivial Hodge cycle. Consider $J^F:=\langle \frac{\partial F}{\partial x_0},\ldots,\frac{\partial F}{\partial x_{n+1}}\rangle$ to be the Jacobian ideal, we define the \textit{Artinian Gorenstein ideal associated to $\lambda$} as
\begin{equation}
\label{ideal}    
J^{F,\lambda}:=(J^F:P_\lambda),
\end{equation}
where $P_\lambda\in \C[x_0,\ldots,x_{n+1}]_{(d-2)(\frac{n}{2}+1)}$ is such that $\lambda_\prim=\res\left(\frac{P_\lambda\Omega}{F^{\frac{n}{2}+1}}\right)^{\frac{n}{2},\frac{n}{2}}$. This ideal is Artinian Gorenstein of $\text{soc}(J^{F,\lambda})=(d-2)(\frac{n}{2}+1)=\frac{1}{2}\text{soc}(J^F)$.
\end{definition}

The importance of this ideal is due to the following proposition which relates it with to the local Hodge locus $V_\lambda$ associated to the Hodge cycle $\lambda$.

\begin{proposition}
Let $X=\{F=0\}\subseteq\P^{n+1}$ be a smooth degree $d$ hypersurface of even dimension $n$, and consider two Hodge cycles $\lambda_1,\lambda_2\in H^{\frac{n}{2},\frac{n}{2}}(X,\Z)$. Then
$$
J^{F,\lambda_1}=J^{F,\lambda_2} \ \ \Longleftrightarrow \ \ \exists c\in\Q^\times: (\lambda_1-c\cdot\lambda_2)_\prim=0 \ \ \Longleftrightarrow \ \ V_{\lambda_1}=V_{\lambda_2}.
$$
\end{proposition}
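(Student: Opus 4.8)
The plan is to establish the two equivalences through the cycle of implications $(B)\Rightarrow(A)\Rightarrow(B)$ together with $(B)\Rightarrow(C)\Rightarrow(A)$, where I abbreviate $(A)$: $J^{F,\lambda_1}=J^{F,\lambda_2}$; $(B)$: $\lambda_{1,\prim}=c\,\lambda_{2,\prim}$ for some $c\in\Q^\times$; and $(C)$: $V_{\lambda_1}=V_{\lambda_2}$. The basic tool throughout is the Griffiths residue isomorphism $R^F_{(d-2)(\frac n2+1)}\xrightarrow{\sim}H^{\frac n2,\frac n2}(X)_\prim$, which identifies the class $[P_{\lambda_i}]$ with $\lambda_{i,\prim}$; in particular $\lambda_{i,\prim}\neq 0$ is equivalent to $[P_{\lambda_i}]\neq 0$ in $R^F$, which I assume since the $\lambda_i$ are non-trivial. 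I write $\sigma:=\operatorname{soc}(J^F)=(d-2)(n+2)$, so that each $P_{\lambda_i}$ lives in degree $\sigma/2$.

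First I would dispatch the purely algebraic equivalence $(A)\Leftrightarrow(B)$. The implication $(B)\Rightarrow(A)$ is immediate, since $P_{\lambda_1}\equiv c\,P_{\lambda_2}\pmod{J^F}$ forces the colon ideals $(J^F:P_{\lambda_1})$ and $(J^F:P_{\lambda_2})$ to coincide. For $(A)\Rightarrow(B)$ I would use Gorenstein duality in $R^F$: equality of the colon ideals means $\operatorname{Ann}_{R^F}([P_{\lambda_1}])=\operatorname{Ann}_{R^F}([P_{\lambda_2}])$, and restricting to degree $\sigma/2$ and invoking the perfect pairing $R^F_{\sigma/2}\times R^F_{\sigma/2}\to R^F_{\sigma}\cong\C$, each nonzero $[P_{\lambda_i}]$ defines a nonzero functional whose kernel is exactly $\operatorname{Ann}_{R^F}([P_{\lambda_i}])_{\sigma/2}$. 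Equality of these two hyperplanes yields $[P_{\lambda_1}]=\mu\,[P_{\lambda_2}]$ for some $\mu\in\C^\times$, i.e. $\lambda_{1,\prim}=\mu\,\lambda_{2,\prim}$. Since both primitive classes are nonzero and rational, comparing a single nonzero rational coordinate shows $\mu\in\Q^\times$, which is $(B)$.

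Next, $(B)\Rightarrow(C)$ I would obtain directly from the fact that $V_\lambda$ depends only on $\lambda_\prim$ up to a nonzero scalar. Writing $\lambda_i=\lambda_{i,\prim}+a_i\,h^{\frac n2}$ with $a_i\in\Q$, the Lefschetz summand $a_i h^{\frac n2}$ stays of type $(\frac n2,\frac n2)$ in every deformation, so the flat transport of $\lambda_i$ remains Hodge precisely when that of $\lambda_{i,\prim}$ does; hence $V_{\lambda_i}=V_{\lambda_{i,\prim}}$ as germs. Moreover rescaling a flat class by $c\in\Q^\times$ merely rescales the holomorphic period functions $\omega\mapsto\langle\lambda,\omega\rangle$ (for $\omega\in F^{\frac n2+1}H^n_\dR(X_{t'})$) whose common zero scheme cuts out $V_{\lambda_{i,\prim}}$, leaving the defining ideal unchanged. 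Thus $(B)$ gives $V_{\lambda_1}=V_{\lambda_{1,\prim}}=V_{c\lambda_{2,\prim}}=V_{\lambda_{2,\prim}}=V_{\lambda_2}$ scheme-theoretically.

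The remaining implication $(C)\Rightarrow(A)$ is where I expect the real difficulty, since it recovers an algebraic invariant from an analytic germ. Here I would invoke the IVHS description of the Hodge locus recorded in \cite{villa2020small}: near $t$ the scheme $V_\lambda$ is the common zero locus of the periods $\langle\lambda,\omega\rangle$, and by Griffiths transversality the full jet of these functions at $t$ is governed by the successive multiplication maps against $[P_\lambda]$ in $R^F$ — exactly the data packaged by the colon ideal $J^{F,\lambda}=(J^F:P_\lambda)$. Consequently the germ $V_\lambda$ and the graded ideal $J^{F,\lambda}$ determine one another, so $(C)$ forces $(A)$, closing the cycle. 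The delicate point will be precisely this step: the first-order information alone, namely the Zariski tangent space $T_tV_\lambda=(J^{F,\lambda})_d$, does \emph{not} in general recover $[P_\lambda]$ up to scalar — the left kernel of the pairing $(v,w)\mapsto[vw\,P_\lambda]$ on $R^F_d\times R^F_{\sigma/2-d}$ need not determine $P_\lambda$ — so one must genuinely exploit the higher-order period data, i.e. the whole graded ideal, rather than the tangent space by itself.
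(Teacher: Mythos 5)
The paper itself does not prove this proposition; it simply cites \cite[Corollary 2.3]{villa2020small}, so there is no in-paper argument to match yours against. On its own terms, the first two thirds of your proposal are sound: the equivalence $(A)\Leftrightarrow(B)$ via Macaulay--Gorenstein duality in degree $\sigma/2$ (two nonzero functionals $v\mapsto[vP_{\lambda_i}]$ on $R^F_{\sigma/2}$ with the same kernel are proportional, and perfectness of the pairing transfers this to $[P_{\lambda_1}]=\mu[P_{\lambda_2}]$, with $\mu\in\Q^\times$ by rationality of the primitive parts) is correct and is the standard argument; and $(B)\Rightarrow(C)$ is correctly reduced to the facts that the Lefschetz summand $a_ih^{n/2}$ contributes nothing to the defining period functions and that rescaling generators does not change the ideal they generate.

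The genuine gap is $(C)\Rightarrow(A)$, and it is exactly where you place it. Your argument there is: the full jets of the period functions $f_Q(t)=\int_{\lambda_t}\omega_Q(t)$ are computed by multiplication against $[P_\lambda]$ in $R^F$, \emph{therefore} ``the germ $V_\lambda$ and the graded ideal $J^{F,\lambda}$ determine one another.'' This only establishes one direction: the class $[P_\lambda]$ determines the Taylor expansions of one particular generating set of $I(V_\lambda)$. What $(C)$ hands you is equality of the \emph{ideals} $I(V_{\lambda_1})=I(V_{\lambda_2})$ in $\mathcal{O}_{T,0}$, not of chosen generators, and two non-proportional classes $[P_{\lambda_1}],[P_{\lambda_2}]$ could a priori produce generator families spanning the same ideal. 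The only invariant of the germ you actually extract is the conormal space, i.e.\ $J^{F,\lambda_1}_d=J^{F,\lambda_2}_d$, and — as you yourself note — the degree-$d$ piece is the left kernel of the pairing $R^F_d\times R^F_{\sigma/2-d}\to\C$, $(G,H)\mapsto[GHP_\lambda]$, which does not determine the pairing (hence not $[P_\lambda]$ up to scalar) once $d<\sigma/2$. So the step from ``same analytic germ'' to ``same colon ideal'' is asserted, not proved; you would need to exhibit an intrinsic higher-order invariant of the germ (e.g.\ the intrinsically defined quadratic and higher fundamental forms, or the tangent spaces $J^{F_{t'},\lambda_{i,t'}}_d$ at all nearby points $t'\in V$) and show it reconstructs the functional $Q\mapsto[QP_\lambda]$ on all of $R^F_{\sigma/2}$. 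Until that is supplied, the cycle of implications does not close, and the proof as written establishes only $(A)\Leftrightarrow(B)\Rightarrow(C)$.
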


\begin{proof}
See \cite[Corollary 2.3]{villa2020small}.
\end{proof}

Moreover, this ideal encodes in a simple way the information of the first order approximation of the Hodge loci. In fact the content of the following proposition is a rephrasing of the classical result of Carlson-Green-Griffiths-Harris on the infinitesimal variation of Hodge structure for hypersurfaces \cite{gri83III}.

\begin{proposition}
\label{propAGtang}
Let $T\subseteq\C[x_0,\ldots,x_{n+1}]_d$ be the parameter space of smooth degree $d$ hypersurfaces of $\P^{n+1}$, of even dimension $n$. For $t\in T$, let $X_t=\{F=0\}\subseteq \P^{n+1}$ be the corresponding hypersurface. For every Hodge cycle $\lambda\in H^{\frac{n}{2},\frac{n}{2}}(X_t,\Z)$, we can compute the Zariski tangent space of its associated Hodge locus $V_\lambda$ as
$$
T_tV_\lambda=J^{F,\lambda}_d.
$$
Where we have identified $T_tT\simeq \C[x_0,\ldots,x_{n+1}]_d$.
\end{proposition}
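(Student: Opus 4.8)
The plan is to realize $V_\lambda$ locally as the zero scheme of explicit holomorphic equations coming from the variation of Hodge structure, and then to compute its Zariski tangent space by differentiating those equations. First I would let $\tilde\lambda$ denote the flat (Gauss--Manin) extension of $\lambda$ over a contractible neighborhood of $t$ in $T$, so that $\nabla\tilde\lambda=0$. Since $\lambda$ is rational, the Hodge condition $\tilde\lambda(s)\in H^{\frac{n}{2},\frac{n}{2}}(X_s)$ is equivalent to the single filtration condition $\tilde\lambda(s)\in F^{\frac{n}{2}}H^n_\dR(X_s)$, because $\tilde\lambda=\overline{\tilde\lambda}$ forces membership in $F^{\frac{n}{2}}\cap\overline{F^{\frac{n}{2}}}$. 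Using the polarization $Q$ and the Hodge--Riemann orthogonality $Q(F^a,F^b)=0$ for $a+b>n$, the subspace $F^{\frac{n}{2}}$ is exactly the $Q$-annihilator of $F^{\frac{n}{2}+1}$ (the dimensions match by $H^n=F^{\frac{n}{2}}\oplus\overline{F^{\frac{n}{2}+1}}$); hence choosing a holomorphic frame $\omega_1(s),\ldots,\omega_k(s)$ of $F^{\frac{n}{2}+1}H^n_\dR(X_s)$ presents $V_\lambda$ scheme-theoretically as $\{s:\ f_i(s):=Q(\tilde\lambda(s),\omega_i(s))=0,\ \forall i\}$.

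Next I would differentiate. Because $\tilde\lambda$ is flat, $df_i(G)=Q(\lambda,\nabla_G\omega_i)$ for $G\in T_tT$. Griffiths transversality gives $\nabla_G\omega_i\in F^{\frac{n}{2}}$, and since $\lambda\in H^{\frac{n}{2},\frac{n}{2}}$ the Hodge--Riemann orthogonality makes $Q(\lambda,\nabla_G\omega_i)$ depend only on the $H^{\frac{n}{2},\frac{n}{2}}$-component of $\nabla_G\omega_i$; by the Carlson--Green--Griffiths--Harris description this component is the IVHS cup product $\bar\nabla_G$ applied to the $H^{\frac{n}{2}+1,\frac{n}{2}-1}$-part of $\omega_i$. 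As the $\omega_i$ run over the chosen frame, these parts run over a frame of $H^{\frac{n}{2}+1,\frac{n}{2}-1}$, so the vanishing of all $df_i(G)$ is equivalent to $Q(\lambda,\bar\nabla_G\psi)=0$ for every $\psi\in H^{\frac{n}{2}+1,\frac{n}{2}-1}$. Using that the cup product $\bar\nabla_G$ is self-adjoint for $Q$, this reads $Q(\bar\nabla_G\lambda_\prim,\psi)=0$ for all $\psi$, and nondegeneracy of $Q$ between $H^{\frac{n}{2}-1,\frac{n}{2}+1}_\prim$ and $H^{\frac{n}{2}+1,\frac{n}{2}-1}_\prim$ yields the clean condition $T_tV_\lambda=\{G\in\C[x_0,\ldots,x_{n+1}]_d:\ \bar\nabla_G\lambda_\prim=0\}$. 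The non-primitive part $\lambda-\lambda_\prim$ is a multiple of a power of the hyperplane class, which stays of type $(\frac{n}{2},\frac{n}{2})$ under every deformation, so $\bar\nabla_G$ annihilates it and it contributes no constraint.

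Finally I would translate the cup product into algebra. By Griffiths' residue isomorphism \cite{gr69} and the CGGH formula for the differential of the period map \cite{gri83III}, writing $\lambda_\prim=\omega_{P_\lambda}$ with $P_\lambda\in R^F_{(d-2)(\frac{n}{2}+1)}$ and identifying $T_tT\simeq\C[x_0,\ldots,x_{n+1}]_d$, the map $\bar\nabla_G$ is, up to a nonzero constant, multiplication in the Jacobian ring: $\bar\nabla_G\omega_{P_\lambda}=(\text{const})\cdot\omega_{P_\lambda\cdot G}$, with $P_\lambda G$ taken in $R^F_{(d-2)(\frac{n}{2}+1)+d}$. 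Hence $\bar\nabla_G\lambda_\prim=0$ if and only if $P_\lambda\cdot G\in J^F$, i.e. $G\in(J^F:P_\lambda)_d=J^{F,\lambda}_d$, which is the claim.

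The hardest part will be the middle step: making the bookkeeping precise so that the collection of pairing conditions $Q(\lambda,\nabla_G\omega_i)=0$ collapses to the single cup-product equation $\bar\nabla_G\lambda_\prim=0$. This rests on two classical inputs that must be applied with care---the self-adjointness of the cup product with respect to the polarization and the perfectness of the Hodge--Riemann pairing on conjugate primitive pieces---together with the observation that it is precisely the rationality of $\lambda$ that reduces the Hodge condition to the holomorphic filtration condition defining $V_\lambda$ as a genuine analytic subscheme. Once this reduction is in place, the passage to $J^{F,\lambda}_d$ is a formal consequence of Griffiths' identification of the cup product with multiplication in $R^F$ and the definition of the ideal quotient.
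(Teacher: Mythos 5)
Your argument is correct and is essentially the proof the paper relies on: the paper simply cites \cite[Propositions 2.1 and 2.2]{villa2020small}, whose content is exactly your reduction of the defining equations of $V_\lambda$ to the IVHS condition $\bar\nabla_G\lambda_\prim=0$ followed by the Carlson--Green--Griffiths--Harris identification of $\bar\nabla_G$ with multiplication by $G$ in the Jacobian ring, giving $(J^F:P_\lambda)_d$. The only cosmetic point is that the cup product is $Q$-anti-self-adjoint rather than self-adjoint (from flatness of $Q$ one gets $Q(\bar\nabla_G a,b)=-Q(a,\bar\nabla_G b)$), which does not affect the conclusion.
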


\begin{proof}
See \cite[Proposition 2.1, Proposition 2.2]{villa2020small}.
\end{proof}

Using the previous result, we can obtain the following technical lemma which is the first step in the proof of \cref{theo2}.

\begin{lemma}
\label{lemmatech}
Let $X^n_d=\{F=0\}$ be the Fermat variety of even dimension $n$ and degree $d\ge 2+\frac{6}{n}$. Let $\lambda\in H^{\frac{n}{2},\frac{n}{2}}(X^n_d,\Z)$ be a non-trivial Hodge cycle such that 
$$
\text{codim }T_0V_\lambda={\frac{n}{2}+d\choose d}-\left(\frac{n}{2}+1\right)^2.
$$
Then there exist $c_\lambda,c_0,c_2,c_4,\ldots,c_n\in\C^\times$ such that up to a permutation of the coordinates $\lambda_\prim=\res\left(\frac{P_\lambda\Omega}{F^{\frac{n}{2}+1}}\right)$ where $P_\lambda$ is given by \eqref{plambda}, i.e.
$$
P_\lambda=c_\lambda\prod_{j=1}^{\frac{n}{2}+1}\left(\frac{x_{{2j-2}}^{d-1}-(c_{2j-2}x_{{2j-1}})^{d-1}}{x_{{2j-2}}-c_{2j-2}x_{{2j-1}}}\right).
$$
\end{lemma}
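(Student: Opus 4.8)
The plan is to convert the hypothesis on $\mathrm{codim}\,T_0V_\lambda$ into information about the Hilbert function of the Artinian Gorenstein ring $R^{F,\lambda}:=\C[x_0,\dots,x_{n+1}]/J^{F,\lambda}$, and then to run a Macaulay duality (apolarity) argument inside the Jacobian ring $R^F$ to force the product shape \eqref{plambda}. By \cref{propAGtang}, $T_0V_\lambda=J^{F,\lambda}_d$, so $\mathrm{codim}\,T_0V_\lambda=\dim_\C R^{F,\lambda}_d$. For the Fermat variety $J^F=\langle x_0^{d-1},\dots,x_{n+1}^{d-1}\rangle$, hence $R^F$ is Artinian Gorenstein of socle degree $\sigma_0=(n+2)(d-2)$ with monomial basis $\{x^\gamma:0\le\gamma_i\le d-2\}$ and perfect pairing $R^F_e\times R^F_{\sigma_0-e}\to R^F_{\sigma_0}\cong\C$ pairing $x^\gamma$ with $x^{\gamma'}$ nontrivially exactly when $\gamma+\gamma'=(d-2,\dots,d-2)$. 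Since $J^{F,\lambda}=(J^F:P_\lambda)$ with $\deg P_\lambda=\sigma:=(d-2)(\tfrac{n}{2}+1)=\tfrac12\sigma_0$, we may write $R^{F,\lambda}=R^F/\mathrm{Ann}_{R^F}(P_\lambda)$, so $R^{F,\lambda}_e$ is the image of $R^F_e\xrightarrow{\,\cdot P_\lambda\,}R^F_{e+\sigma}$ and the pairing yields the Gorenstein symmetry $\dim R^{F,\lambda}_e=\dim R^{F,\lambda}_{\sigma-e}$; note $d\ge 2+\frac{6}{n}$ gives $\sigma\ge d+1$, keeping the comparison degrees in range.

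Next I would invoke Movasati's lower bound \cite{GMCD-NL}, $\dim R^{F,\lambda}_d\ge\binom{\frac{n}{2}+d}{d}-(\frac{n}{2}+1)^2$, and study the equality case imposed by the hypothesis. The tensor-product algebra $\bigotimes_{j=1}^{\frac{n}{2}+1}\C[u_j]/(u_j^{d-1})$ has degree-$d$ dimension $\binom{\frac{n}{2}+d}{d}-(\frac{n}{2}+1)^2$ (inclusion--exclusion over $0\le e_j\le d-2$, $\sum e_j=d$), and is realised as $R^{F,\lambda}$ precisely when $J^{F,\lambda}$ contains $\frac{n}{2}+1$ independent linear forms that decouple the $n+2$ variables into disjoint pairs. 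The core of the proof is to show that equality forces exactly this structure: that $J^{F,\lambda}$ contains $\frac{n}{2}+1$ independent linear forms and that, after permuting and rescaling coordinates (the only linear changes preserving $J^F$), they take the normal form $\ell_j=x_{2j-2}-c_{2j-2}x_{2j-1}$, $j=1,\dots,\frac{n}{2}+1$.

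This rigidity is the main obstacle. The difficulty is that $J^F$ is preserved only by coordinate permutations and rescalings, so the pairing of variables cannot be manufactured by a generic linear change of coordinates but must be read off from the complementary-monomial pairing on $R^F$ together with the relations $x_i^{d-1}=0$. I would analyse the linear forms $\ell=\sum_i a_ix_i$ annihilating $P_\lambda$ in $R^F$ directly: since $\lambda$ is a Hodge class, $P_\lambda$ is supported (modulo $J^F$) on Hodge monomials (\cref{proprelGrSh}, i.e.\ on $\mathcal{B}^n_d$), and expanding $\ell P_\lambda=0$ coefficient-by-coefficient on the box shows that each admissible $\ell$ couples only a complementary pair of variables, while the equality in the dimension count forces the $\frac{n}{2}+1$ independent forms to exhaust all $n+2$ variables. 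I expect the cleanest implementation to be an induction on $n$ peeling off one variable-pair at a time, reducing to the two-variable building block $\C[u,v]/(u^{d-1},v^{d-1})$, in the spirit of the $(1,d-1)$-padding reduction used in the proof of \cref{proppicmax}.

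Granting the normal form, the product shape follows cleanly. For each $j$ one has $\ell_j\cdot g_j=x_{2j-2}^{d-1}-(c_{2j-2}x_{2j-1})^{d-1}=0$ in $R^F$, where $g_j:=\frac{x_{2j-2}^{d-1}-(c_{2j-2}x_{2j-1})^{d-1}}{x_{2j-2}-c_{2j-2}x_{2j-1}}$, so $\prod_j g_j$ lies in the common kernel $K:=\{Q\in R^F_\sigma:\ell_jQ=0\ \forall j\}$. By the perfect pairing, $\dim_\C K=\dim_\C\bigl(R^F/\langle\ell_1,\dots,\ell_{\frac{n}{2}+1}\rangle\bigr)_\sigma$, and since this quotient is $\bigotimes_{j}\C[u_j]/(u_j^{d-1})$ with one-dimensional degree-$\sigma$ part, $K$ is one-dimensional. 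As $P_\lambda\in K$ (because $\ell_j\in J^{F,\lambda}=\mathrm{Ann}_{R^F}(P_\lambda)$), we conclude $P_\lambda=c_\lambda\prod_{j=1}^{\frac{n}{2}+1}g_j$ for some $c_\lambda\in\C^\times$, which is exactly \eqref{plambda}. The permutation in the statement is the one aligning these pairs, and this completes the proof.
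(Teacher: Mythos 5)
Your overall architecture matches what the paper actually does: its proof of this lemma is a two-line citation to \cite[Propositions 5.3 and 4.1]{villa2020small}, and those two propositions are exactly the two facts you isolate, namely (a) equality in the codimension bound forces $J^{F,\lambda}$ to contain $\frac{n}{2}+1$ linearly independent linear forms, and (b) for the Fermat these forms must, up to permutation of coordinates, take the shape $x_{2j-2}-c_{2j-2}x_{2j-1}$ on disjoint pairs of variables. Your closing paragraph --- the one-dimensionality of the common kernel $K$ via the perfect pairing on $R^F$, giving $P_\lambda=c_\lambda\prod_j g_j$ --- is correct and complete, and is a clean way to finish once (a) and (b) are in hand.

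The gap is that (a) and (b) are asserted rather than proved. For (a), ``invoke Movasati's lower bound and study the equality case'' is not an argument: producing even one nonzero element of $J^{F,\lambda}_1$ from a dimension count in degree $d$ is the hard Macaulay-type step about Hilbert functions of Artinian Gorenstein quotients, and nothing in your sketch supplies the mechanism. For (b), the claim that each admissible linear form ``couples only a complementary pair of variables'' cannot follow from the dimension count alone: the unpaired configuration $V=\langle x_0-x_1,\,x_0-x_2\rangle$ (three variables in one proportionality class, one variable alone, say for $n=2$) also yields a Gorenstein ideal $\langle V\rangle+J^F=(J^F:Q)$ whose quotient is $\C[y_1,y_2]/(y_1^{d-1},y_2^{d-1})$, with the same socle degree and the same degree-$d$ Hilbert function $\binom{\frac n2+d}{d}-(\frac n2+1)^2$. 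What excludes it is that its dual socle generator $Q=\sum_{\delta_0+\delta_1+\delta_2=2(d-2),\,\delta_i\le d-2}x^\delta$ is supported on characters outside $\mathcal{B}^n_d$ (e.g.\ $(3,3,3,1)$ for $d=5$, which fails $|2\alpha|=2$), i.e.\ the Hodge condition via \cref{proprelGrSh}; this is also where the hypothesis $d\ge 2+\frac 6n$ must enter, since for $d=3$, $n=2$ that unpaired $Q$ \emph{does} give a Hodge class. You point at this coefficient analysis and at an induction on $n$, but neither is carried out, so as written the proposal establishes the lemma only modulo the two cited propositions, which is where all of the difficulty lives.
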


\begin{proof}
This follows from \cite[Proposition 5.3, Proposition 4.1]{villa2020small}. The final assertion that $\res\left(\frac{P_\lambda\Omega}{F^{\frac{n}{2}+1}}\right)=\res\left(\frac{P_\lambda\Omega}{F^{\frac{n}{2}+1}}\right)^{\frac{n}{2},\frac{n}{2}}$ follows from \cref{theoshioda} \cite[Theorem 1]{shioda1979hodge}.
\end{proof}

\section{Proof of \cref{theo2}}
\label{sec5}
In this section we will prove \cref{theo2}, thus characterizing fake linear cycles as residue forms. In order to do this we will first bound the field of definition of all fake linear cycles by computing their periods, then we will characterize them as those invariant under the Galois action.

\begin{proposition}
\label{propboundfd}
In the same context of \cref{lemmatech} we have that $c_\lambda\in \Q(\zeta_{2d})^\times$ and $c_0,c_2,\ldots,c_n\in \zeta_{2d}^{-3}\cdot\mathbb{S}^1_{\Q(\zeta_{d})}=\{\zeta_{2d}^{-3}\cdot z\in \Q(\zeta_{2d}) \ : \ z\in\Q(\zeta_d) \text{ and }|z|=1\}$. In consequence $\lambda_\prim$ is a $\Q(\zeta_{2d})$-linear combination of residue forms $\omega_\beta$ with $\Q_{\omega_\beta}\subseteq\Q(\zeta_{2d})$.
\end{proposition}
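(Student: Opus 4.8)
The plan is to exploit that $\lambda$ is an integral Hodge class, so that $\lambda_\prim\in H^n(X^n_d,\Q)_\prim$ is a rational class and, in the normalization of \cref{periods}, all of its periods $\frac{1}{(2\pi i)^{\frac{n}{2}}}\int_{\delta_{\beta'}}\lambda_\prim$ lie in $\Q$. First I would expand the product $P_\lambda$ of \eqref{plambda} into the monomial basis using $\frac{x^{d-1}-(cx')^{d-1}}{x-cx'}=\sum_{k=0}^{d-2}c^{k}x^{d-2-k}(x')^{k}$, obtaining $\lambda_\prim=c_\lambda\sum_{\beta}\big(\prod_{j=1}^{\frac{n}{2}+1}c_{2j-2}^{\beta_{2j-1}}\big)\,\omega_\beta$, where the sum runs over the $\beta$ with $\beta_{2j-2}+\beta_{2j-1}=d-2$ for all $j$, i.e. exactly those whose associated character is totally decomposable.

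Next I would compute these periods with \cref{periods}. The crucial structural point is that, because $P_\lambda$ factors over the coordinate pairs $(2j-2,2j-1)$, the period $\frac{1}{(2\pi i)^{\frac{n}{2}}}\int_{\delta_{\beta'}}\lambda_\prim$ factors as $c_\lambda\cdot(\text{const})\cdot\prod_{j}S_j$, with each factor $S_j$ depending only on $c_{2j-2}$ and on the pair $(\beta'_{2j-2},\beta'_{2j-1})$. Grouping the $\Gamma$ factors in pairs via Euler's reflection formula exactly as in \cref{rmkcebeta}, all powers of $2\pi i$ cancel against the normalization, and after simplifying the resulting differences of roots of unity each $S_j$ collapses to the algebraic character sum $S_j=-c_{2j-2}^{-1}\sum_{m=1}^{d-1}c_{2j-2}^{m}\big(\zeta_{2d}^{(2r+1)m}-\zeta_{2d}^{(2r-1)m}\big)$, where $r=\beta'_{2j-1}-\beta'_{2j-2}$.

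Now I would extract the arithmetic conditions. Forming the ratio of two periods whose $\beta'$ differ only in the $j$-th pair, the constant, $c_\lambda$, and the other factors cancel, so rationality of both periods forces the ratio of the two corresponding character sums to lie in $\Q$. Reading these sums as a finite Fourier transform in $r$ and inverting it, one concludes that the numbers $c_{2j-2}^{m}(\zeta_{2d}^{m}-\zeta_{2d}^{-m})$, $m=1,\dots,d-1$, all lie on a single $\Q(\zeta_d)$-line; comparing the cases $m=1,2$ then gives $c_{2j-2}\in\zeta_{2d}^{-3}\cdot\Q(\zeta_d)\subseteq\Q(\zeta_{2d})$. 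The modulus condition $|c_{2j-2}|=1$ I would obtain from the fact that rational periods are real: conjugating the factorized period and using the explicit formula for $S_j$ (which sends $c\mapsto\bar c$ and $r\mapsto -r$) yields a per-pair identity relating $c_{2j-2}$ and $\bar c_{2j-2}$, which forces $c_{2j-2}\bar c_{2j-2}$ to be a positive real root of unity, hence $1$. Finally, once every $c_{2j-2}\in\Q(\zeta_{2d})$ each $S_j$ lies in $\Q(\zeta_{2d})$, so solving for $c_\lambda$ from a single non-vanishing period gives $c_\lambda\in\Q(\zeta_{2d})^\times$.

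The ``in consequence'' statement is then immediate: every coefficient $c_\lambda\prod_j c_{2j-2}^{\beta_{2j-1}}$ lies in $\Q(\zeta_{2d})$, and each $\omega_\beta$ occurring has totally decomposable character, so $\Q_{\omega_\beta}\subseteq\Q(\zeta_{2d})$ by \cref{galact}. The step I expect to be the main obstacle is the middle one: the finite-Fourier/number-theoretic argument that rationality of the character-sum ratios pins down the field of $c_{2j-2}$ (with the bookkeeping separating the behaviour according to the parity of $d$), together with the conjugation symmetry needed for $|c_{2j-2}|=1$. One must also check that the pair $(0,1)$, where $\beta'_0=0$ is fixed and $r$ misses one residue, can still be handled, for instance by the permutation symmetry of the Fermat variety.
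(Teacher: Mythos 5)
Your overall strategy coincides with the paper's: use that $\lambda_\prim$ is a rational class so its normalized periods over the vanishing cycles lie in $\Q$, observe that by \cref{periods} and \cref{rmkcebeta} each such period factors as $c_\lambda$ times a constant times a product of pairwise factors $E_{j,\beta'}$ depending only on $c_{2j-2}$ and $r=\beta'_{2j-1}-\beta'_{2j-2}$, and then compare periods that differ in a single pair. Where you diverge is the extraction step. The paper simply sums the geometric series to get the closed form
$$
E_{j,\beta'}=\frac{c(c^{d}+1)\zeta_{2d}^{2r-1}(1-\zeta_d)}{(c\zeta_{2d}^{2r-1}-1)(c\zeta_{2d}^{2r+1}-1)}\qquad (c=c_{2j-2},\ c^d\neq -1),
$$
takes the single ratio $r=1$ versus $r=0$, which must be a rational number $q$, and solves explicitly: $c=\frac{q-\zeta_d}{\zeta_{2d}^{3}(q-\zeta_d^{-1})}$. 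This one formula yields both $c\in\zeta_{2d}^{-3}\Q(\zeta_d)$ and $|c|=1$ simultaneously (the latter because $q$ is real, so numerator and denominator are complex conjugates up to the unit $\zeta_{2d}^{-3}$). Your Fourier-inversion over all twists $r\in\Z/d\Z$ is a legitimate alternative for the field statement --- I checked that inverting $S_j(r)=\sum_m\zeta_d^{rm}u_m$ with $u_m=c^m(\zeta_{2d}^m-\zeta_{2d}^{-m})$ and comparing $u_2/u_1=c(\zeta_{2d}+\zeta_{2d}^{-1})\in\Q(\zeta_d)$ does give $c\in\zeta_{2d}\Q(\zeta_d)=\zeta_{2d}^{-3}\Q(\zeta_d)$ --- but it is more demanding: it needs all $d$ residues of $r$ (hence the $(0,1)$-pair problem you flag, where $\beta_0'=0$ is frozen), and it needs the nonvanishing bookkeeping that the paper carries out via the set $S=\{j: c_{2j-2}^d=-1\}$ and the admissible multi-indices $\mathfrak{B}$; when $c_{2j-2}^d=-1$ the factor $E_{j,\beta'}$ vanishes for all but two values of $r$, so the choice of the comparison cycles is not free.

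The genuine gap is in your modulus argument. The assertion that conjugating the factorized period ``forces $c_{2j-2}\bar c_{2j-2}$ to be a positive real root of unity, hence $1$'' is not justified: $|c|^2$ is a positive real algebraic number with no a priori reason to be a root of unity, and the per-pair identity you would actually get from $\overline{S(c,r)}=-S(\bar c,-r)$ is an equality of two rational functions of $c$ and $\bar c$ evaluated at finitely many $r$, from which $|c|=1$ does not follow by inspection. You would need to substitute the closed form and do the algebra --- at which point you are essentially redoing the paper's computation, since the cleanest route is to note that the single rational ratio $E_{j}(1)/E_{j}(0)=q$ already determines $c$ as $\frac{q-\zeta_d}{\zeta_{2d}^3(q-\zeta_d^{-1})}$, whence $|c|=1$ is immediate. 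I would recommend replacing your separate conjugation step by this explicit solution; as written, the modulus claim is unproved.
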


\begin{proof}
Since $\lambda_\prim=\res\left(\frac{P_\lambda\Omega}{F^{\frac{n}{2}+1}}\right)$ is a Hodge class, all its periods are rational numbers. Using the formula given by \cref{periods} together with \cref{rmkcebeta} we have that
$$
\frac{1}{(2\pi i)^\frac{n}{2}}\int_{\delta_{\beta'}}\lambda_\prim=\frac{c_\lambda}{d^{\frac{n}{2}+1}\cdot \frac{n}{2}!}\sum_{\beta\in I}\prod_{j=1}^{\frac{n}{2}+1}\frac{c_{2j-2}^{\beta_{2j-1}}}{\zeta_{2d}^{\beta_{2j-2}+1}-\zeta_{2d}^{-\beta_{2j-2}-1}}\prod_{i=0}^{n+1}(\zeta_d^{(\beta_i+1)(\beta_i'+1)}-\zeta_d^{(\beta_i+1)\beta_i'})
$$
$$
=\frac{c_\lambda}{d^{\frac{n}{2}+1}\cdot \frac{n}{2}!}\sum_{\beta\in I}c_0^{\beta_1}\cdot c_2^{\beta_3}\cdots c_n^{\beta_{n+1}}\zeta_{2d}^{\beta_0+\beta_2+\cdots+\beta_n+\frac{n}{2}+1}\cdot \zeta_d^{\sum_{i=0}^{n+1}(\beta_i+1)\beta_i'}\frac{\prod_{i=0}^{n+1}(\zeta_d^{\beta_i+1}-1)}{\prod_{j=1}^{\frac{n}{2}+1}(\zeta_d^{\beta_{2j-2}+1}-1)}
$$
$$
=\frac{c_\lambda(c_0c_2\cdots c_n)^{-1}}{d^{\frac{n}{2}+1}\cdot \frac{n}{2}!}\sum_{\beta_1,\beta_3,\ldots,\beta_{n+1}=0}^{d-2}\prod_{j=1}^{\frac{n}{2}+1}(\zeta_{2d}^{-1}c_{2j-2}\zeta_d^{\beta_{2j-1}'-\beta_{2j-2}'})^{\beta_{2j-1}+1}(1-\zeta_d^{\beta_{2j-1}+1})
$$
$$
=\frac{c_\lambda(c_0c_2\cdots c_n)^{-1}}{d^{\frac{n}{2}+1}\cdot \frac{n}{2}!}\prod_{j=1}^{\frac{n}{2}+1}\left(\sum_{\ell=1}^{d-1}(c_{2j-2}\zeta_{2d}^{2(\beta_{2j-1}'-\beta_{2j-2}')-1})^\ell-(c_{2j-2}\zeta_{2d}^{2(\beta_{2j-1}'-\beta_{2j-2}')+1})^\ell\right)
$$
$$
=\frac{c_\lambda(c_0c_2\cdots c_n)^{-1}}{d^{\frac{n}{2}+1}\cdot \frac{n}{2}!}\prod_{j=1}^{\frac{n}{2}+1} E_{j,\beta'}\in\Q \ \ \ , \ \ \ \forall \beta'\in \{0,1,\ldots,d-2\}^{n+1},
$$
where each $E_{j,\beta'}=\sum_{\ell=1}^{d-1}(c_{2j-2}\zeta_{2d}^{2(\beta_{2j-1}'-\beta_{2j-2}')-1})^\ell-(c_{2j-2}\zeta_{2d}^{2(\beta_{2j-1}'-\beta_{2j-2}')+1})^\ell$. If $c_{2j-2}^d=-1$, we can always choose some $\beta_{2j-1}',\beta_{2j-2}'\in\{0,1,\ldots,d-2\}$ such that $E_{j,\beta'}\neq 0$. Let us denote 
$$
S:=\{j\in\{1,2,\ldots,\frac{n}{2}+1\}: c_{2j-2}^d=-1\}
$$
and consider the set $\mathfrak{B}$ of all $\beta'\in\{0,1,\ldots,d-2\}^{n+1}$ such that the value of $E_{j,\beta'}\neq 0$ is fixed for every $j\in S$. For every $\beta'\in \mathfrak{B}$ we have that for $j\notin S$
$$
E_{j,\beta'}=\frac{c_{2j-2}(c_{2j-2}^d+1)\zeta_{2d}^{2(\beta_{2j-1}'-\beta_{2j-2}')-1}(1-\zeta_d)}{(c_{2j-2}\cdot \zeta_{2d}^{2(\beta_{2j-1}'-\beta_{2j-2}')-1}-1)(c_{2j-2}\cdot \zeta_{2d}^{2(\beta_{2j-1}'-\beta_{2j-2}')+1}-1)}\neq 0.
$$
It is clear that $c_{2j-2}\in\zeta_{2d}^{-3}\cdot\mathbb{S}^1_{\Q(\zeta_{d})}$ for $j\in S$. In order to show that $c_{2j-2}\in\zeta_{2d}^{-3}\cdot\mathbb{S}^1_{\Q(\zeta_{d})}$ for $j\notin S$, fix some $j_0\notin S$ and consider two $\beta',\beta''\in\mathfrak{B}$ such that $E_{j,\beta'}=E_{j,\beta''}$ for all $j\neq j_0$ and $\beta_{2j_0-1}'-\beta_{2j_0-2}'=1$, $\beta_{2j_0-1}''-\beta_{2j_0-2}''=0$. Then
$$
\frac{\int_{\delta_{\beta'}}\lambda_\prim}{\int_{\delta_{\beta''}}\lambda_\prim}=\frac{E_{j_0,\beta'}}{E_{j_0,\beta''}}=\frac{\zeta_d(c_{2j_0-2}\cdot \zeta_{2d}^{-1}-1)}{c_{2j_0-2}\cdot \zeta_{2d}^3-1}=q\in\Q^\times
$$
and so 
$$
c_{2j_0-2}=\frac{q- \zeta_d}{\zeta_{2d}^3(q-\zeta_{d}^{-1})}\in\zeta_{2d}^{-3}\cdot \mathbb{S}^1_{\Q(\zeta_{d})}.
$$
Finally, since for every $\beta'\in\mathfrak{B}$ we know that $E_{j,\beta'}\in\Q(\zeta_{2d})$, it follows from the above formula for $\frac{1}{(2\pi i)^{\frac{n}{2}}}\int_{\delta_{\beta'}}\lambda_\prim\in\Q$ that $c_\lambda\in\Q(\zeta_{2d})$.
\end{proof}

\begin{remark}
\label{rmk2}
 Note that after \cref{lemmatech} and \cref{propboundfd} we know that all fake linear cycles are of the form
$$
\lambda_\prim=\res\left(\frac{P_\lambda\Omega}{F^{\frac{n}{2}+1}}\right)
$$
for $P_\lambda$ given by \eqref{plambda} where $c_\lambda\in \Q(\zeta_{2d})^\times$ and $c_0,c_2,\ldots,c_n\in \zeta_{2d}^{-3}\cdot \mathbb{S}^1_{\Q(\zeta_d)}$. In order to complete the proof of \cref{theo2} we only need to prove that for any choice of $c_0,c_2,\ldots,c_n\in \zeta_{2d}^{-3}\cdot\mathbb{S}^1_{\Q(\zeta_d)}$ there exists some $c_\lambda\in \Q(\zeta_{2d})^\times$ such that $\lambda$ is in fact a Hodge class, i.e. such that 
$$
\Q_\lambda=\Q.
$$
In terms of Galois cohomology, to prove the existence of such $c_\lambda$, is equivalent to find a number $c_\lambda\in\Q(\zeta_{2d})^\times$ such that 
$$
\sigma(\lambda)=\lambda
$$
for all $\sigma\in\Gal(\Q(\zeta_{2d})/\Q)$. This in turn translates into collection of relations of the form 
$$
\sigma(c_\lambda)=c_\lambda\cdot \phi_\sigma
$$
for some numbers $\phi_\sigma(c_0,c_2,\ldots,c_n)\in\Q(\zeta_{2d})^\times$ which can be explicitly computed case by case. Since the set $\{{\sigma(c_\lambda)}/{c_\lambda}\}$ is by definition a 1-coboundary in the group cohomology of $G=\Gal(\Q(\zeta_{2d})/\Q)$ with coefficients in $\Q(\zeta_{2d})^\times$, the theorem will follow if we show that $\{\phi_\sigma\}$ is a 1-cocycle by the following well known result which can be found in \cite{neukirch2013cohomology}.
\end{remark}

\begin{theorem}[Hilbert's Theorem 90]
If $L/K$ is a finite Galois extension of fields with Galois group $G=\Gal(L/K)$, then the first group cohomology $H^1(G,L^\times)=\{1\}$.
\end{theorem}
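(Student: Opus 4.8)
The plan is to prove the general statement $H^1(G,L^\times)=\{1\}$ directly, via the classical resolvent argument, since this abstract fact is exactly what gets invoked in \cref{rmk2}. Recall that a $1$-cocycle is a map $\sigma\mapsto a_\sigma\in L^\times$ satisfying the multiplicative cocycle identity $a_{\sigma\tau}=a_\sigma\cdot\sigma(a_\tau)$, while the $1$-coboundaries are precisely the maps of the form $\sigma\mapsto\sigma(b)/b$ for some $b\in L^\times$. The assertion $H^1(G,L^\times)=\{1\}$ thus amounts to showing that every $1$-cocycle $\{a_\sigma\}$ is of this coboundary form.

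First I would fix a $1$-cocycle $\{a_\sigma\}_{\sigma\in G}$ and introduce the resolvent
$$
c:=\sum_{\tau\in G}a_\tau\,\tau(\theta)\in L,
$$
depending on an auxiliary element $\theta\in L$ still to be chosen. The crucial point is that $c$ can be made nonzero: the distinct field automorphisms $\tau\colon L\to L$ are linearly independent over $L$ by Dedekind's lemma on linear independence of characters, and since all coefficients $a_\tau$ lie in $L^\times$ and hence are nonzero, the $L$-linear functional $\theta\mapsto\sum_\tau a_\tau\,\tau(\theta)$ does not vanish identically. Therefore there is some $\theta$ with $c\neq 0$, i.e. $c\in L^\times$.

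Next I would compute $\sigma(c)$ for an arbitrary $\sigma\in G$ and use the cocycle relation to factor $a_\sigma$ out. Applying $\sigma$ termwise gives $\sigma(c)=\sum_\tau\sigma(a_\tau)\,\sigma\tau(\theta)$, and solving the cocycle identity for $\sigma(a_\tau)=a_\sigma^{-1}a_{\sigma\tau}$ turns this into $\sigma(c)=a_\sigma^{-1}\sum_\tau a_{\sigma\tau}\,\sigma\tau(\theta)$. Re-indexing the sum by $\rho=\sigma\tau$, which again runs over all of $G$, the remaining sum is exactly $c$, so $\sigma(c)=a_\sigma^{-1}c$. Setting $b:=c^{-1}\in L^\times$ then yields $\sigma(b)=a_\sigma\,b$, that is $a_\sigma=\sigma(b)/b$, exhibiting the cocycle as a coboundary and completing the argument.

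I expect the only genuinely nontrivial ingredient to be the nonvanishing of the resolvent, i.e. the appeal to linear independence of characters; everything else is the bookkeeping of the cocycle identity and the reindexing $\tau\mapsto\sigma\tau$. In our application this abstract statement is used with $L=\Q(\zeta_{2d})$, $K=\Q$ and the cocycle $\{\phi_\sigma\}$ of \cref{rmk2}, so that once $\{\phi_\sigma\}$ is verified to satisfy the cocycle identity, the existence of the desired $c_\lambda\in\Q(\zeta_{2d})^\times$ with $\sigma(c_\lambda)/c_\lambda=\phi_\sigma$ follows immediately.
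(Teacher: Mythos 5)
Your argument is correct and complete: it is the classical resolvent proof of (Noether's form of) Hilbert's Theorem~90, with the only substantive input being Dedekind's lemma on linear independence of the automorphisms $\tau\in G$ over $L$, which guarantees a choice of $\theta$ making $c=\sum_{\tau}a_\tau\,\tau(\theta)$ nonzero; the computation $\sigma(c)=a_\sigma^{-1}c$ and the reindexing $\tau\mapsto\sigma\tau$ are carried out correctly, so every cocycle is the coboundary of $b=c^{-1}$. The paper itself offers no proof of this statement --- it is quoted as a well-known fact with a citation to \cite{neukirch2013cohomology} --- so there is nothing to compare against; your self-contained derivation is a valid substitute for that citation, and your closing remark correctly identifies how the result is applied in \cref{rmk2} with $L=\Q(\zeta_{2d})$, $K=\Q$ and the cocycle $\{\phi_\sigma\}$.
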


Now we are in position to prove \cref{theo2}, but we will divide the proof into the three possible cases $d=3,4,6$. Along all the proofs we will denote by 
$$
I:=\left\{(\beta_0,\ldots,\beta_{n+1})\in\{0,\ldots,d-2\}^{n+2}: \beta_{2j-2}+\beta_{2j-1}=d-2 \ , \ \forall j=1,\ldots,\frac{n}{2}+1\right\}
$$
the set of multi-indexes corresponding to the monomials of $P_\lambda$ given by \eqref{plambda}.

\begin{theorem}
For the Fermat cubic $X^n_3$ with $n\ge 6$, all fake linear cycles are of the form
$$
\lambda_\prim=\res\left(\frac{P_\lambda\Omega}{F^{\frac{n}{2}+1}}\right)
$$
for $P_\lambda$ given by \eqref{plambda}, where $c_0,c_2,\ldots,c_n\in\mathbb{S}^1_{\Q(\zeta_6)}$ but not all $3$-th roots of $-1$ simultaneously, and $c_\lambda\in \Q(\zeta_{6})^\times$. Moreover, for any such choice of $c_i$'s, there exists some $c_\lambda\in\Q(\zeta_{6})^\times$ such that the class $\lambda_\prim$, given by $P_\lambda$ as in \eqref{plambda}, is the class of a fake linear cycle.
\end{theorem}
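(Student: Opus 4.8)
For the proof I would split the statement into its characterization part (every fake linear cycle has the displayed shape, with the stated constraints on the $c_i$'s and $c_\lambda$) and its existence part (the ``Moreover'' clause), obtaining the former essentially for free from the general results already established and reserving the real work for the latter. The characterization is just the specialization of \cref{lemmatech} and \cref{propboundfd} to $d=3$: \cref{lemmatech} gives $\lambda_\prim=\res\left(\frac{P_\lambda\Omega}{F^{\frac{n}{2}+1}}\right)$ with $P_\lambda$ as in \eqref{plambda}, and \cref{propboundfd} forces $c_\lambda\in\Q(\zeta_6)^\times$ and $c_{2j-2}\in\zeta_6^{-3}\cdot\mathbb{S}^1_{\Q(\zeta_3)}$. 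Here I would record the two arithmetic coincidences peculiar to $d=3$: first $\zeta_{2d}^{-3}=\zeta_6^{-3}=-1$, and second $\Q(\zeta_3)=\Q(\zeta_6)$, so that $\zeta_6^{-3}\cdot\mathbb{S}^1_{\Q(\zeta_3)}=-\mathbb{S}^1_{\Q(\zeta_6)}=\mathbb{S}^1_{\Q(\zeta_6)}$ since $-1\in\mathbb{S}^1_{\Q(\zeta_6)}$. This yields exactly $c_{2j-2}\in\mathbb{S}^1_{\Q(\zeta_6)}$, and the side condition that the $c_i$'s are not simultaneously $3$rd roots of $-1$ is precisely what separates fake from genuine linear cycles (the latter have all $c_{2j-2}^d=-1$, as for the linear cycle used in the proof of \cref{galact}), as already discussed after \cref{theo2}.

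For the existence part I would follow \cref{rmk2}: by Hilbert's Theorem 90 it suffices to show that $\{\phi_\sigma\}$ is a $1$-cocycle of $G=\Gal(\Q(\zeta_6)/\Q)$ with values in $\Q(\zeta_6)^\times$, where $\phi_\sigma$ records the defect $\sigma(\lambda_\prim)/\lambda_\prim$ after $c_\lambda$ is stripped off. The decisive simplification in degree $3$ is that $G\cong(\Z/6\Z)^\times=\{1,\sigma\}$ has order $2$, with $\sigma$ complex conjugation ($t\equiv-1\bmod 6$); thus there is a single nontrivial Galois element to analyze, and the cocycle condition collapses to the one norm identity $\phi_\sigma\,\sigma(\phi_\sigma)=1$.

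Concretely I would write $\lambda_\prim=c_\lambda\sum_{\beta\in I}\bigl(\prod_{j}c_{2j-2}^{\beta_{2j-1}}\bigr)\omega_\beta$ (for $d=3$ each factor of $P_\lambda$ is $x_{2j-2}+c_{2j-2}x_{2j-1}$) and compute $\sigma(\lambda_\prim)$ term by term. Two inputs combine: on scalars $\sigma(c_{2j-2})=c_{2j-2}^{-1}$ since $|c_{2j-2}|=1$; on forms \cref{galact} gives $\sigma(\omega_\beta)=\pm\,\omega_\gamma$ with $\gamma_i=1-\beta_i$ (the pairwise swap of exponents), and the key point is that for $d=3$ the sign $(-1)^{\sum_j(ta_{2j-2}-\overline{ta_{2j-2}})/d}$ is independent of $\beta$, equal to $(-1)^{\frac{n}{2}+1}$, because each summand is odd whether $a_{2j-2}\in\{1,2\}$. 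Reindexing by the involution $\beta\mapsto\gamma$ and using $\gamma_{2j-2}=1-\gamma_{2j-1}$ to pull out a factor $(c_0c_2\cdots c_n)^{-1}$, I recover $\sigma(\lambda_\prim)=(-1)^{\frac{n}{2}+1}\tfrac{\sigma(c_\lambda)}{c_\lambda}(c_0\cdots c_n)^{-1}\lambda_\prim$, so that $\phi_\sigma=(-1)^{\frac{n}{2}+1}c_0c_2\cdots c_n$. The norm identity then reads $\phi_\sigma\,\sigma(\phi_\sigma)=(-1)^{2(\frac{n}{2}+1)}(c_0\cdots c_n)(c_0\cdots c_n)^{-1}=1$, which verifies the cocycle condition and lets Hilbert 90 produce the required $c_\lambda\in\Q(\zeta_6)^\times$.

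The main obstacle is the bookkeeping in this last step: extracting the correct Galois sign from \cref{galact} and, above all, tracking how the pairwise-swap reindexing $\beta\mapsto\gamma$ interacts with the monomial coefficients $\prod_j c_{2j-2}^{\beta_{2j-1}}$, since this is exactly what generates the factor $(c_0\cdots c_n)^{-1}$ in $\phi_\sigma$ and ultimately makes the norm identity hold. Everything else is a direct specialization of results already in hand, and the order-$2$ Galois group is what makes degree $3$ the most transparent of the three cases.
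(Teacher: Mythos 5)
Your proposal is correct and follows essentially the same route as the paper: the characterization is delegated to \cref{lemmatech} and \cref{propboundfd} (with the observation that $\zeta_6^{-3}\cdot\mathbb{S}^1_{\Q(\zeta_3)}=\mathbb{S}^1_{\Q(\zeta_6)}$), and the existence of $c_\lambda$ is reduced via \cref{galact} to the single condition $\sigma(c_\lambda)/c_\lambda=(-1)^{\frac{n}{2}+1}c_0c_2\cdots c_n$, settled by Hilbert~90. Your verification of the cocycle condition $\phi_\sigma\,\sigma(\phi_\sigma)=1$ is exactly the paper's norm computation $N\bigl((-1)^{\frac{n}{2}+1}c_0c_2\cdots c_n\bigr)=1$ in the cyclic order-$2$ case, and your sign and reindexing bookkeeping matches the paper's.
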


\begin{proof}
Since all the monomials of $P_\lambda$ are totally decomposable, and all their accompanying coefficients belong to $\Q(\zeta_6)$ we know (by \cref{galact}) that 
$$
\Q_{\lambda}\subseteq \Q(\zeta_6).
$$
In order to show that $\Q_\lambda=\Q$ it is enough to show that $\lambda$ is invariant under the action of $\Gal(\Q(\zeta_6)/\Q)=\{id, \sigma\}$ where $\sigma(\zeta_6)=\zeta_6^{-1}=\overline{\zeta_6}$. In particular for every $\alpha\in\Q(\zeta_6)$, $\alpha=a+b\zeta_6$ for $a,b\in\Q$ and so $\sigma(\alpha)=a+b\overline{\zeta_6}=\overline{\alpha}$. With this we conclude that for  $\gamma_i=1-\beta_i$ 
$$
\sigma(\lambda)=\sigma({c}_\lambda)\sum_{\beta\in I}(-1)^{\frac{\sum_{j=1}^{\frac{n}{2}+1}(5(\beta_{2j-2}+1)-\overline{5(\beta_{2j-2}+1)})}{3}}\omega_\gamma\prod_{j=1}^{\frac{n}{2}+1}c_{2j-2}^{-\beta_{2j-1}}.
$$
Hence
$$
\sigma(\lambda)=\lambda \ \ \ \text{
if and only if } \ \ \ \frac{\sigma(c_\lambda)}{c_\lambda}=(-1)^{\frac{n}{2}+1}c_0\cdot c_2\cdots c_n.
$$
Since $N((-1)^{\frac{n}{2}+1}c_0\cdot c_2\cdots c_n)=|(-1)^{\frac{n}{2}+1}c_0\cdot c_2\cdots c_n|^2=1$, we know such $c_\lambda$ always exists by Hilbert's Theorem 90.
\end{proof}

\begin{theorem}
For the Fermat quartic $X^n_4$ with $n\ge 4$, all fake linear cycles are of the form
$$
\lambda_\prim=\res\left(\frac{P_\lambda\Omega}{F^{\frac{n}{2}+1}}\right)
$$
for $P_\lambda$ given by \eqref{plambda}, where $c_0,c_2,\ldots,c_n\in\zeta_8\cdot\mathbb{S}^1_{\Q(i)}$ but not all $4$-th roots of $-1$ simultaneously, and $c_\lambda\in \Q(\zeta_{8})^\times$. Moreover, for any such choice of $c_i$'s, there exists some $c_\lambda\in\Q(\zeta_{8})^\times$ such that the class $\lambda_\prim$, given by $P_\lambda$ as in \eqref{plambda}, is the class of a fake linear cycle.
\end{theorem}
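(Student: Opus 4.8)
The first assertion is already contained in the earlier results. Indeed, \cref{lemmatech} forces $P_\lambda$ to have the shape \eqref{plambda}, and \cref{propboundfd} constrains $c_\lambda\in\Q(\zeta_8)^\times$ and $c_0,\dots,c_n\in\zeta_8^{-3}\cdot\mathbb{S}^1_{\Q(i)}$; since $\zeta_8^{-4}=-1\in\mathbb{S}^1_{\Q(i)}$ we have $\zeta_8^{-3}\cdot\mathbb{S}^1_{\Q(i)}=\zeta_8\cdot\mathbb{S}^1_{\Q(i)}$, which is the set in the statement. Thus the real content is the converse: for arbitrary $c_0,\dots,c_n\in\zeta_8\cdot\mathbb{S}^1_{\Q(i)}$ I must exhibit a scalar $c_\lambda\in\Q(\zeta_8)^\times$ making $\lambda_\prim$ a Hodge (hence $\Q$-rational) class. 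Writing $\lambda=c_\lambda\tilde\lambda$ with $\tilde\lambda:=\sum_{\beta\in I}\big(\prod_{j}c_{2j-2}^{\beta_{2j-1}}\big)\omega_\beta$, every monomial of $P_\lambda$ is totally decomposable with coefficient in $\Q(\zeta_8)$, so \cref{galact} already yields $\Q_\lambda\subseteq\Q(\zeta_8)$. Following \cref{rmk2}, the problem reduces to solving $\sigma(c_\lambda)/c_\lambda=\phi_\sigma$ simultaneously for all $\sigma\in G:=\Gal(\Q(\zeta_8)/\Q)\simeq(\Z/8\Z)^\times$, where $\phi_\sigma$ is defined by the (not yet established) relation $\sigma(\tilde\lambda)=\phi_\sigma^{-1}\tilde\lambda$.

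The heart of the proof is the computation showing $\sigma(\tilde\lambda)$ really is a scalar multiple of $\tilde\lambda$. Because the set $I$ factors as a product over the $\tfrac n2+1$ coordinate pairs, and because \cref{galact} shows the Galois action permutes totally decomposable characters $(a,d-a)$ among themselves (the residues $3,7$ reduce to $3\bmod 4$ and swap $(a,d-a)\leftrightarrow(d-a,a)$, while $5$ reduces to $1\bmod 4$ and fixes each one), it suffices to compute on a single block $c^2\omega_{[1]}+c\,\omega_{[2]}+\omega_{[3]}$ with $c=c_{2j-2}$ and $\omega_{[a]}$ attached to the character $(a,4-a)$, and then multiply the block eigenvalues. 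Writing $c=\zeta_8 z$ with $z\in\mathbb{S}^1_{\Q(i)}$, the three involutions $\sigma_t\colon\zeta_8\mapsto\zeta_8^{t}$ restrict to $\Q(i)$ so that $\sigma_3(c)=-c^{-1}$, $\sigma_5(c)=-c$ and $\sigma_7(c)=\bar c=c^{-1}$; feeding these together with the sign $(-1)^{(ta_{2j-2}-\overline{ta_{2j-2}})/d}$ of \cref{galact} into the block, a short calculation gives $\sigma(\tilde\lambda)=\phi_\sigma^{-1}\tilde\lambda$ with
$$
\phi_3=\prod_{j}c_{2j-2}^{2},\qquad \phi_5=(-1)^{\frac n2+1},\qquad \phi_7=(-1)^{\frac n2+1}\prod_{j}c_{2j-2}^{2}.
$$
It is exactly here that the hypothesis $c_{2j-2}\in\zeta_8\cdot\mathbb{S}^1_{\Q(i)}$ is essential: it is the precise condition guaranteeing that the field-automorphism action on the coefficient $c^{\beta_{2j-1}}$ matches the character permutation up to a factor independent of $\beta$, so that $\tilde\lambda$ is a genuine common eigenvector.

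The new obstacle relative to the cubic case is that $G$ is the Klein four-group, not cyclic, so the elementary norm-one form of Hilbert 90 used for $d=3$ does not apply. Instead I would invoke the general statement $H^1(G,\Q(\zeta_8)^\times)=\{1\}$ (Hilbert's Theorem 90 above), which yields the required $c_\lambda$ as soon as $\{\phi_\sigma\}_{\sigma\in G}$ is a $1$-cocycle, i.e. $\phi_{\sigma\tau}=\phi_\sigma\,\sigma(\phi_\tau)$. This holds automatically: from $\sigma(\tilde\lambda)=\phi_\sigma^{-1}\tilde\lambda$, evaluating $(\sigma\tau)(\tilde\lambda)$ in two ways forces the cocycle identity without any case analysis; concretely one checks $\phi_\sigma\,\sigma(\phi_\sigma)=1$ for the three involutions and $\phi_7=\phi_3\,\sigma_3(\phi_5)$, all immediate from $\sigma_3(c)=-c^{-1}$ and the rationality of the signs. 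Hilbert 90 then produces $c_\lambda\in\Q(\zeta_8)^\times$, and taking the $c_i$ not all $d$-th roots of $-1$ ensures (as in \cref{theo2}) that the class is a fake rather than a genuine linear cycle. The main difficulty is thus the eigenvector computation of the previous paragraph; granting it, the cocycle property and Hilbert 90 conclude the argument mechanically.
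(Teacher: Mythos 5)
Your proposal is correct and follows essentially the same route as the paper: derive the shape of $P_\lambda$ and the constraints on the $c_i$ from \cref{lemmatech} and \cref{propboundfd}, compute the action of $\sigma_3,\sigma_5,\sigma_7$ via \cref{galact} to show $\tilde\lambda$ is a common eigenvector with the same eigenvalues $\phi_3=(c_0\cdots c_n)^2$, $\phi_5=(-1)^{\frac{n}{2}+1}$, $\phi_7=(-1)^{\frac{n}{2}+1}(c_0\cdots c_n)^2$, and conclude by Hilbert's Theorem 90 for the Klein four-group $\Gal(\Q(\zeta_8)/\Q)$. Your block-by-block organization of the eigenvector computation and your explicit verification of the cocycle identity are just slightly more detailed presentations of what the paper does in one sum and dismisses as holding ``by definition.''
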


\begin{proof}
Note first that $\zeta_8^{-3}\cdot\mathbb{S}^1_{\Q(\zeta_4)}=\zeta_8\cdot \mathbb{S}^1_{\Q(i)}$. Since all the monomials of $P_\lambda$ are totally decomposable, and all their accompanying coefficients belong to $\Q(\zeta_8)$ we see that 
$$
\Q_{\lambda}\subseteq \Q(\zeta_8).
$$
In order to show that $\Q_\lambda=\Q$ it is enough to show that $\lambda$ is invariant under the action of $\Gal(\Q(\zeta_8)/\Q)=\{id, \sigma_3,\sigma_5,\sigma_7\}$ where $\sigma_j(\zeta_8)=\zeta_8^{j}$. Observe that $\sigma_7(\zeta_8)=\zeta_8^{-1}=\overline{\zeta_8}$. In particular for every $\alpha=a\zeta_8+b\zeta_8^3\in \zeta_8\cdot \mathbb{S}^1_{\Q(i)}$ we have 
$$
\sigma_3(\alpha)=-\overline{\alpha} \ , \ \ \ 
\sigma_5(\alpha)=-\alpha \ , \ \ \
\sigma_7(\alpha)=\overline{\alpha}.
$$
With this and \cref{galact} we conclude that for $\gamma_i=2-\beta_i$
$$
\sigma_7(\lambda)=\sigma_7(c_\lambda)\sum_{\beta\in I}(-1)^{\frac{\sum_{j=1}^{\frac{n}{2}+1}(7(\beta_{2j-2}+1)-\overline{7(\beta_{2j-2}+1)})}{4}}\omega_\gamma\prod_{j=1}^{\frac{n}{2}+1}c_{2j-2}^{-\beta_{2j-1}}.
$$
Hence
\begin{equation}
\label{eq4.1}
\sigma_7(\lambda)=\lambda \ \ \ \text{
if and only if } \ \ \ \frac{\sigma_7(c_\lambda)}{c_\lambda}=(-1)^{\frac{n}{2}+1}(c_0\cdot c_2\cdots c_n)^2.
\end{equation}
On the other hand for $\gamma_i=\beta_i$
$$
\sigma_5(\lambda)=\sigma_5(c_\lambda)\sum_{\beta\in I}(-1)^{\frac{\sum_{j=1}^{\frac{n}{2}+1}(5(\beta_{2j-2}+1)-\overline{5(\beta_{2j-2}+1)})}{4}}\omega_\gamma\prod_{j=1}^{\frac{n}{2}+1}(-c_{2j-2})^{\beta_{2j-1}}.
$$
Hence
\begin{equation}
\label{eq4.2}
\sigma_5(\lambda)=\lambda \ \ \ \text{ if and only if } \ \ \ \frac{\sigma_5(c_\lambda)}{c_\lambda}=(-1)^{\frac{n}{2}+1}.
\end{equation}
Finally for $\gamma_j=2-\beta_j$
$$
\sigma_3(\lambda)=\sigma_3(c_\lambda)\sum_{\beta\in I}(-1)^{\frac{\sum_{j=1}^{\frac{n}{2}+1}(3(\beta_{2j-2}+1)-\overline{3(\beta_{2j-2}+1)})}{4}}\omega_\gamma\prod_{j=1}^{\frac{n}{2}+1}(-c_{2j-2})^{-\beta_{2j-1}}.
$$
Hence
\begin{equation}
\label{eq4.3}
\sigma_3(\lambda)=\lambda \ \ \ \text{ if and only if } \ \ \ \frac{\sigma_3(c_\lambda)}{c_\lambda}=(c_0\cdot c_2\cdots c_n)^2.    
\end{equation}
Equations \eqref{eq4.1}, \eqref{eq4.2} and \eqref{eq4.3} imply the existence of the desired $c_\lambda$ such that $\Q_\lambda=\Q$, if and only if, the map $\phi:\Gal(\Q(\zeta_8)/\Q)\rightarrow \Q(\zeta_8)^\times$ given by 
$$
\phi(id)=1,
$$
$$
\phi(\sigma_3)=(c_0\cdot c_2\cdots c_n)^2,
$$
$$
\phi(\sigma_5)=(-1)^{\frac{n}{2}+1},
$$
$$
\phi(\sigma_7)=(-1)^{\frac{n}{2}+1}(c_0\cdot c_2\cdots c_n)^2
$$
is a 1-coboundary. By Hilbert's Theorem 90 we know $H^1(G,L^\times)=\{1\}$ for $L=\Q(\zeta_8)$ and $G=\Gal(\Q(\zeta_8)/\Q)$ and so we get the existence of the desired $c_\lambda\in \Q(\zeta_8)$ after noting that $\phi$ is a 1-cocycle by definition.
\end{proof}

\begin{theorem}
For the Fermat sextic $X^n_6$ with $n\ge 2$, all fake linear cycles are of the form
$$
\lambda_\prim=\res\left(\frac{P_\lambda\Omega}{F^{\frac{n}{2}+1}}\right)
$$
for $P_\lambda$ given by \eqref{plambda}, where $c_0,c_2,\ldots,c_n\in i\cdot\mathbb{S}^1_{\Q(\zeta_6)}$ but not all $6$-th roots of $-1$ simultaneously, and $c_\lambda\in \Q(\zeta_{12})^\times$. Moreover, for any such choice of $c_i$'s, there exists some $c_\lambda\in\Q(\zeta_{12})^\times$ such that the class $\lambda_\prim$, given by $P_\lambda$ as in \eqref{plambda}, is the class of a fake linear cycle.
\end{theorem}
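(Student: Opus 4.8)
The plan is to mirror the cubic and quartic arguments, now over $\Q(\zeta_{12})$ with Galois group $\Gal(\Q(\zeta_{12})/\Q)\cong(\Z/12\Z)^\times=\{id,\sigma_5,\sigma_7,\sigma_{11}\}$, where $\sigma_t(\zeta_{12})=\zeta_{12}^t$. First I would record the identity $\zeta_{12}^{-3}\cdot\mathbb{S}^1_{\Q(\zeta_6)}=i\cdot\mathbb{S}^1_{\Q(\zeta_6)}$, which holds because $\zeta_{12}^{3}=i$ and $-1\in\mathbb{S}^1_{\Q(\zeta_6)}$; this reconciles the hypothesis $c_{2j-2}\in i\cdot\mathbb{S}^1_{\Q(\zeta_6)}$ with the general bound of \cref{propboundfd}. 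Since every monomial $x^\beta$ appearing in $P_\lambda$ is totally decomposable and its coefficient $\prod_j c_{2j-2}^{\beta_{2j-1}}$ lies in $\Q(\zeta_{12})$, \cref{galact} gives $\Q_\lambda\subseteq\Q(\zeta_{12})$. Hence it suffices to produce $c_\lambda\in\Q(\zeta_{12})^\times$ making the fixed class $\mu:=\sum_{\beta\in I}\big(\prod_j c_{2j-2}^{\beta_{2j-1}}\big)\omega_\beta$ satisfy $\sigma(c_\lambda\mu)=c_\lambda\mu$ for every $\sigma$, so that $\lambda_\prim=c_\lambda\mu$ becomes a genuine Hodge class.

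Next I would compute how each generator acts. Writing $c_{2j-2}=i\,z_{2j-2}$ with $z_{2j-2}\in\Q(\zeta_6)$, $|z_{2j-2}|=1$, and using that $\sigma_5,\sigma_{11}$ restrict to complex conjugation on $\Q(\zeta_6)$ while $\sigma_7$ fixes it, together with $\sigma_5(i)=i$, $\sigma_7(i)=\sigma_{11}(i)=-i$, one finds $\sigma_5(c_{2j-2})=-c_{2j-2}^{-1}$, $\sigma_7(c_{2j-2})=-c_{2j-2}$, and $\sigma_{11}(c_{2j-2})=c_{2j-2}^{-1}$. On the cohomological side \cref{galact} supplies the sign $(-1)^{\frac{1}{d}\sum_j(t a_{2j-2}-\overline{ta_{2j-2}})}$ and the induced permutation of $I$: since $5\equiv 11\equiv -1$ and $7\equiv 1\pmod 6$, the automorphisms $\sigma_5,\sigma_{11}$ swap each pair via $\gamma_i=(d-2)-\beta_i$, whereas $\sigma_7$ fixes $\beta$. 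Collapsing the $\beta$-dependent signs with the relation $\sum_j(\beta_{2j-2}+\beta_{2j-1})=4\big(\frac{n}{2}+1\big)$, I expect to obtain that $\mu$ spans a $\Gal$-stable line, namely
$$
\sigma_5(\mu)=(c_0c_2\cdots c_n)^{-4}\mu,\qquad \sigma_7(\mu)=(-1)^{\frac{n}{2}+1}\mu,\qquad \sigma_{11}(\mu)=(-1)^{\frac{n}{2}+1}(c_0c_2\cdots c_n)^{-4}\mu.
$$

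Equivalently, the requirement $\sigma(c_\lambda\mu)=c_\lambda\mu$ becomes $\sigma(c_\lambda)/c_\lambda=\phi_\sigma$ with $\phi_{\sigma_5}=(c_0\cdots c_n)^{4}$, $\phi_{\sigma_7}=(-1)^{\frac{n}{2}+1}$ and $\phi_{\sigma_{11}}=(-1)^{\frac{n}{2}+1}(c_0\cdots c_n)^{4}$. The map $\sigma\mapsto\psi_\sigma:=\sigma(\mu)/\mu\in\Q(\zeta_{12})^\times$ is automatically a $1$-cocycle, since $\sigma\tau(\mu)=\sigma(\psi_\tau)\,\psi_\sigma\,\mu$; hence so is its inverse $\phi$. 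By Hilbert's Theorem 90, $H^1(\Gal(\Q(\zeta_{12})/\Q),\Q(\zeta_{12})^\times)=\{1\}$, so $\phi$ is a coboundary and the desired $c_\lambda$ exists, giving $\Q_\lambda=\Q$. Thus $\lambda_\prim=c_\lambda\mu$ is a Hodge class with the codimension property of \cref{lemmatech}, and the hypothesis that the $c_{2j-2}$ are not all sixth roots of $-1$ keeps it off the true linear locus, so it is a fake linear cycle.

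I expect the main obstacle to be purely computational bookkeeping: correctly combining the sign exponent of \cref{galact} with the pairwise index swap for the two conjugation-type automorphisms, and---most importantly---verifying that the resulting proportionality constant $\psi_\sigma$ is genuinely independent of $\beta$ (equivalently of $\gamma$). That independence, which rests on the swap $\gamma_{2j-2}\leftrightarrow\gamma_{2j-1}$ preserving the pairwise sum $\gamma_{2j-2}+\gamma_{2j-1}=d-2$, is exactly what makes $\mu$ span a Galois-stable line and hence what allows Hilbert 90 to be invoked.
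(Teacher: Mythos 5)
Your proposal is correct and follows essentially the same route as the paper: bound $\Q_\lambda\subseteq\Q(\zeta_{12})$ via \cref{galact}, compute the action of $\sigma_5,\sigma_7,\sigma_{11}$ on the coefficients and on the $\omega_\beta$ (your values $\sigma_5(c)=-c^{-1}$, $\sigma_7(c)=-c$, $\sigma_{11}(c)=c^{-1}$ and the resulting cocycle $\phi_{\sigma_5}=(c_0\cdots c_n)^4$, $\phi_{\sigma_7}=(-1)^{\frac{n}{2}+1}$, $\phi_{\sigma_{11}}=(-1)^{\frac{n}{2}+1}(c_0\cdots c_n)^4$ agree with the paper's equations \eqref{eq6.1}--\eqref{eq6.3}), and conclude by Hilbert 90. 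Your explicit remark that $\phi$ is a cocycle because it is the inverse of $\sigma\mapsto\sigma(\mu)/\mu$ for the Galois-stable line spanned by $\mu$ is a slightly more transparent justification of what the paper asserts "by definition."
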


\begin{proof}
Note first that all the elements of $\zeta_{12}^{-3}\cdot\mathbb{S}^1_{\Q(\zeta_6)}=i\cdot \mathbb{S}^1_{\Q(\zeta_6)}$ are of the form $a\zeta_{12}+b\zeta_{12}^3$ for $a,b\in\Q$. Since all the monomials of $P_\lambda$ are totally decomposable, and all their accompanying coefficients belong to $\Q(\zeta_{12})$ we see that 
$$
\Q_{\lambda}\subseteq \Q(\zeta_{12}).
$$
In order to show that $\Q_\lambda=\Q$ it is enough to show that $\lambda$ is invariant under the action of $\Gal(\Q(\zeta_{12})/\Q)=\{id, \sigma_5,\sigma_7,\sigma_{11}\}$ where $\sigma_j(\zeta_{12})=\zeta_{12}^{j}$. Observe that $\sigma_{11}(\zeta_{12})=\zeta_{12}^{-1}=\overline{\zeta_{12}}$. In particular for every $\alpha=a\zeta_{12}+b\zeta_{12}^3$ with $a,b\in\Q,$ we have 
$$
\sigma_5(\alpha)=-\overline{\alpha} \ , \ \ \ \sigma_7(\alpha)=-\alpha \ , \ \ \ \sigma_{11}(\alpha)=\overline{\alpha}.
$$
With this and \cref{galact} we conclude that for $\gamma_i=4-\beta_i$
$$
\sigma_{11}(\lambda)=\sigma_{11}(c_\lambda)\sum_{\beta\in I}(-1)^{\frac{\sum_{j=1}^{\frac{n}{2}+1}(11(\beta_{2j-2}+1)-\overline{11(\beta_{2j-2}+1)})}{6}}\omega_\gamma\prod_{j=1}^{\frac{n}{2}+1}c_{2j-2}^{-\beta_{2j-1}}.
$$
Hence
\begin{equation}
\label{eq6.1}    
\sigma_{11}(\lambda)=\lambda \ \ \ \text{ if and only if } \ \ \ \frac{\sigma_{11}(c_\lambda)}{c_\lambda}=(-1)^{\frac{n}{2}+1}(c_0\cdot c_2\cdots c_n)^4.
\end{equation}
On the other hand for $\gamma_i=\beta_i$
$$
\sigma_7(\lambda)=\sigma_7(c_\lambda)\sum_{\beta\in I}(-1)^{\frac{\sum_{j=1}^{\frac{n}{2}+1}(7(\beta_{2j-2}+1)-\overline{7(\beta_{2j-2}+1)})}{6}}\omega_\gamma\prod_{j=1}^{\frac{n}{2}+1}(-c_{2j-2})^{\beta_{2j-1}}.
$$
Hence
\begin{equation}
\label{eq6.2}    
\sigma_7(\lambda)=\lambda \ \ \ \text{ if and only if } \ \ \ \frac{\sigma_7(c_\lambda)}{c_\lambda}=(-1)^{\frac{n}{2}+1}.
\end{equation}
Finally for $\gamma_j=4-\beta_j$
$$
\sigma_5(\lambda)=\sigma_5(c_\lambda)\sum_{\beta\in I}(-1)^{\frac{\sum_{j=1}^{\frac{n}{2}+1}(5(\beta_{2j-2}+1)-\overline{5(\beta_{2j-2}+1)})}{6}}\omega_\gamma\prod_{j=1}^{\frac{n}{2}+1}(-c_{2j-2})^{-\beta_{2j-1}}.
$$
Hence
\begin{equation}
\label{eq6.3}    
\sigma_5(\lambda)=\lambda \ \ \ \text{ if and only if } \ \ \ \frac{\sigma_5(c_\lambda)}{c_\lambda}=(c_0\cdot c_2\cdots c_n)^4.
\end{equation}
Equations \eqref{eq6.1}, \eqref{eq6.2} and \eqref{eq6.3} imply the existence of the desired $c_\lambda$, if and only if, the map $\phi:\Gal(\Q(\zeta_{12})/\Q)\rightarrow \Q(\zeta_{12})^\times$ given by 
$$
\phi(id)=1,
$$
$$
\phi(\sigma_5)=(c_0\cdot c_2\cdots c_n)^4,
$$
$$
\phi(\sigma_7)=(-1)^{\frac{n}{2}+1},
$$
$$
\phi(\sigma_{11})=(-1)^{\frac{n}{2}+1}(c_0\cdot c_2\cdots c_n)^4
$$
is a 1-coboundary. By Hilbert's Theorem 90 we know $H^1(G,L^\times)=\{1\}$ for $L=\Q(\zeta_{12})$ and $G=\Gal(\Q(\zeta_{12})/\Q)$. Thus $c_\lambda\in \Q(\zeta_{12})$ exists since $\phi$ is by definition a 1-cocycle.
\end{proof}

\begin{remark}
We want to highlight that using the Galois action in cohomology it is also possible to obtain another proof of \cite[Theorem 1.1]{villa2020small} as follows.
\end{remark}

\begin{proposition}
\label{propvillasmall}
There are no fake linear cycles inside $X^n_d$ for $d\ge 2+\frac{6}{n}$ and $d\neq 3,4,6$. In other words, for $P_\lambda$ given by \eqref{plambda} such that $c_\lambda\in \Q(\zeta_{2d})^\times$ and $c_0,c_2,\ldots,c_n\in \mathbb{S}^1_{\Q(\zeta_{2d})}$ we have
$$
c_{2i-2}^d=-1 \ , \ \ \ \ \text{ for all }i=1,\ldots,\frac{n}{2}+1.
$$
\end{proposition}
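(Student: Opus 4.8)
The plan is to re-run the Galois-invariance analysis of \S\ref{sec5}, but now for a degree $d\neq 3,4,6$, where \cref{lemmaent} supplies a prime small enough to rigidify the coefficients $c_{2j-2}$. By \cref{lemmatech} and \cref{propboundfd} we may assume
$$
\lambda_\prim=c_\lambda\sum_{\beta\in I}\Big(\prod_{j=1}^{\frac{n}{2}+1}c_{2j-2}^{\,\beta_{2j-1}}\Big)\omega_\beta,
$$
with $c_\lambda\in\Q(\zeta_{2d})^\times$ and $c_{2j-2}\in\mathbb{S}^1_{\Q(\zeta_{2d})}$, where $\beta\in I$ is encoded by the tuple $(a_0,a_2,\dots,a_n)\in\{1,\dots,d-1\}^{\frac n2+1}$ via $a_{2j-2}=\beta_{2j-2}+1$ and $\beta_{2j-1}=d-1-a_{2j-2}$. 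First I would fix the minimal prime $q$ with $q\nmid 2d$ furnished by \cref{lemmaent}; since $\gcd(q,2d)=1$ it defines some $\sigma_q\in\Gal(\Q(\zeta_{2d})/\Q)$, and since $\lambda$ is a Hodge class we must have $\sigma_q(\lambda)=\lambda$.

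Next I would expand $\sigma_q(\lambda)$ using \cref{galact}: each $\omega_\beta$ is sent to $(-1)^{\sum_j\lfloor qa_{2j-2}/d\rfloor}\,\omega_\gamma$ with $\gamma$ the totally decomposable class attached to $(\overline{qa_0},\dots,\overline{qa_n})$, while each coefficient $c_{2j-2}^{\,\beta_{2j-1}}$ is carried to $\sigma_q(c_{2j-2})^{\,d-1-a_{2j-2}}$. As $\sigma_q$ merely permutes the decomposable classes via $a_{2j-2}\mapsto\overline{qa_{2j-2}}$, matching the coefficient of each $\omega_\gamma$ in $\sigma_q(\lambda)=\lambda$ yields, for every tuple,
$$
\frac{\sigma_q(c_\lambda)}{c_\lambda}=\prod_{j=1}^{\frac n2+1}R_j(a_{2j-2}),\qquad R_j(a):=\frac{c_{2j-2}^{\,d-1-\overline{qa}}}{(-1)^{\lfloor qa/d\rfloor}\,\sigma_q(c_{2j-2})^{\,d-1-a}}.
$$
Since the left-hand side is independent of the tuple and the entries $a_{2j-2}$ range over $\{1,\dots,d-1\}$ independently, each factor $R_j$ must be constant in $a$.

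Finally, comparing $R_j(a)$ with $R_j(a+1)$ for consecutive $a\in\{1,\dots,d-2\}$, I would read off
$$
\sigma_q(c_{2j-2})=
\begin{cases}
c_{2j-2}^{\,q}, & \text{if }\ \lfloor q(a+1)/d\rfloor=\lfloor qa/d\rfloor,\\
-\,c_{2j-2}^{\,q-d}, & \text{if }\ \lfloor q(a+1)/d\rfloor=\lfloor qa/d\rfloor+1.
\end{cases}
$$
The number of ``carries'' among $a=1,\dots,d-2$ equals $\lfloor q(d-1)/d\rfloor-\lfloor q/d\rfloor=q-1$, so both a carry and a non-carry occur precisely when $2\le q\le d-2$; this is exactly where \cref{lemmaent} enters, giving $q<\frac d2$ (or $q=\frac{d+1}2\le d-2$ in the cases $d=5,9$) together with $q\ge 3$ because $2\mid 2d$. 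Having both cases forces $c_{2j-2}^{\,q}=-c_{2j-2}^{\,q-d}$, i.e. $c_{2j-2}^{\,d}=-1$ for every $j$, which is the assertion.

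I expect the main obstacle to be the bookkeeping of the second step: correctly tracking the sign $\epsilon_{q,\beta}$ from \cref{galact} together with the action of $\sigma_q$ on the coefficients, so that the per-coordinate functional equation $R_j\equiv\text{const}$ emerges cleanly and the entries can be varied one at a time. Once that factorization is in place, the carry/non-carry dichotomy combined with the smallness of $q$ from \cref{lemmaent} finishes the argument, and the failure of this range to contain a valid $q$ for $d=3,4,6$ is precisely why those degrees are exceptional.
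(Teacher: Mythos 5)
Your proposal is correct, and its overall strategy coincides with the paper's: impose Galois invariance $\sigma(\lambda)=\lambda$, use \cref{galact} to match the coefficient of each $\omega_\gamma$, isolate one coordinate at a time to get a per-coordinate functional equation, and feed in the prime $q$ from \cref{lemmaent}. Where you genuinely diverge is the endgame. The paper fixes the integer $k$ with $\frac{d}{k+1}<q<\frac{d}{k}$, applies the relation \eqref{id3} with $t=2d-q$ to the pairs $(a,b)=(k+1,1)$ and $(2,1)$ (the latter raised to the $k$-th power), and invokes the arithmetic identity \eqref{eq:0907} to land on $\zeta_{2d}^{-d}=c_{2j-2}^{-d}$. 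You instead compare consecutive values $R_j(a)=R_j(a+1)$, obtaining $\sigma_q(c_{2j-2})=c_{2j-2}^{q}$ at a non-carry and $\sigma_q(c_{2j-2})=-c_{2j-2}^{q-d}$ at a carry, and then note that the telescoping count gives exactly $q-1$ carries and $d-q-1$ non-carries among $a=1,\dots,d-2$, both positive precisely because $3\le q\le d-2$ (which \cref{lemmaent} guarantees for $d\ge 5$, $d\ne 6$, including the boundary case $q=\frac{d+1}{2}=d-2$ when $d=5$). This is a cleaner way to conclude: it avoids verifying \eqref{eq:0907} and makes transparent why $d=3,4,6$ are exceptional (there the minimal prime $q\nmid 2d$ exceeds $d-2$, so only one of the two cases can occur). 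The only points needing explicit mention in a write-up are that $q<d$ so each carry increment is $0$ or $1$, and that the factors $R_i$ are nonzero so that fixing the other coordinates legitimately isolates $R_j$; both are immediate.
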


\begin{proof}
Let $t\in (\Z/2d\Z)^\times\simeq \Gal(\Q(\zeta_{2d})/\Q)$. Since $\omega_{P_\lambda}$ is a Hodge class, it is a rational class and so it is invariant under the Galois action, i.e. $t(\omega_{P_\lambda})=\omega_{P_\lambda}$. Hence we can write
$$
\omega_{P_\lambda}=c_\lambda\sum_{\beta\in I}\omega_{\beta}\prod_{j=1}^{\frac{n}{2}+1}c_{2j-2}^{\beta_{2j-1}}.
$$
Applying the action of $t$ we get that
$$
t(c_\lambda)\sum_{\beta\in I}t(\omega_\beta)\prod_{j=1}^{\frac{n}{2}+1}t(c_{2j-2}^{\beta_{2j-1}})=c_\lambda\sum_{\beta\in I}\omega_{\beta}\prod_{j=1}^{\frac{n}{2}+1}c_{2j-2}^{\beta_{2j-1}},
$$
and so 
$$
t(c_\lambda)\cdot t(\omega_\beta)\prod_{j=1}^{\frac{n}{2}+1}t(c_{2j-2}^{\beta_{2j-1}})=c_\lambda\cdot \omega_\gamma\prod_{j=1}^{\frac{n}{2}+1}c_{2j-2}^{\gamma_{2j-1}}
$$
for $\omega_\beta\in V(\alpha)$, $\omega_\gamma\in V(t\cdot \alpha)$. It follows from \cref{galact} that
$$
t(c_\lambda)(-1)^{\frac{\sum_{j=1}^{\frac{n}{2}+1}(ta_{2j-2}-\overline{ta_{2j-2}})}{d}}\prod_{j=1}^{\frac{n}{2}+1}t(c_{2j-2}^{d-a_{2j-2}-1})=c_\lambda\prod_{j=1}^{\frac{n}{2}+1}c_{2j-2}^{\overline{-ta_{2j-2}}-1}
$$
holds for all choices of $a_0,a_2,\ldots,a_n\in\{1,\ldots,d-1\}$. For each $j=1,\ldots,\frac{n}{2}+1$, fix the values of $a_{2i-2}=1$ for all $i\neq j$, and let $a_{2j-2}=a,b$ take two variable values $a,b\in\{1,\ldots,d-1\}$. Dividing both resulting identities we obtain 
$$
(-1)^{\frac{ta-tb-\overline{ta}+\overline{tb}}{d}}t(c_{2j-2}^{b-a})=c_{2j-2}^{\overline{-ta}-\overline{-tb}}    
$$
for all $a,b\in\{1,\ldots,d-1\}$. Or equivalently
\begin{equation}
\label{id3}
t(\zeta_{2d}^{a-b}c_{2j-2}^{b-a})=\zeta_{2d}^{\overline{ta}-\overline{tb}}c_{2j-2}^{\overline{tb}-\overline{ta}}.   \end{equation}
Now, let $q:=\min\{p \text{ prime }: \ p\nmid 2d \}$ as in \cref{lemmaent}, hence $\gcd(2d,2d-q)=1$
and $q<\frac{d}{2}$ or $q=\frac{d+1}{2}$. If $q<\frac{d}{2}$, there exists $k\in\{2,3,\dots,d-2\}$ such that $\frac{d}{k+1}<q<\frac{d}{k}$. In this case we have
\begin{equation}
\label{eq:0907}
    (1-k)(\overline{2d-q})-\overline{(k+1)(2d-q)}+k(\overline{2(2d-q)})=-d.
\end{equation}
Using \eqref{id3} for $t=2d-q$ we have
\begin{equation*}
\begin{split}
\zeta_{2d}^{k(2d-q)+\overline{2d-q}-\overline{(k+1)(2d-q)}}c_{2j-2}^{\overline{(k+1)(2d-q)}-\overline{2d-q}}&=t(c_{2j-2}^k)\\
&=\zeta_{2d}^{k\left((2d-q)+\overline{2d-q}-\overline{2(2d-q)}\right)}c_{2j-2}^{k\left(\overline{2(2d-q)}-\overline{2d-q}\right)},
\end{split}
\end{equation*}
and therefore $\zeta_{2d}^{(1-k)(\overline{2d-q})-\overline{(k+1)(2d-q)}+k(\overline{2(2d-q)})}=c_{2j-2}^{(1-k)(\overline{2d-q})-\overline{(k+1)(2d-q)}+k(\overline{2(2d-q)})}.$ By \eqref{eq:0907} we conclude that $c_{2j-2}^d=-1.$ In the case where $q=\frac{d+1}{2}$ the argument above works taking $k=2$ in \eqref{eq:0907}, which is then equal to $d$ instead of $-d$.
\end{proof}




\section{Quadratic fundamental form and proof of \cref{theo1}}
\label{sec6}
In this final section we recall the quadratic fundamental form described by Maclean \cite{maclean2005second}. Her result was described in the context of surfaces for the classical Noether-Lefschetz loci, however in higher dimensions it also gives a partial description of the quadratic fundamental form which is enough for our purposes. Since the original proof applies word by word to the general case we will omit it. 

\begin{definition}
Let $M$ be a smooth $m$-dimensional analytic scheme, $V$ a vector bundle on $M$ and $\sigma$ a section of $V$. Let $W$ be the zero locus of $\sigma$ and let $x\in W$. The \textit{quadratic fundamental form of $\sigma$ at $x$} is
$$
q_{\sigma,x}:T_xW\otimes T_xW\rightarrow V_x/\text{Im}(d\sigma_x)
$$
given in local coordinates $(z_1,\ldots,z_m)$ around $x$ by
$$
q_{\sigma,x}\left(\sum_{i=1}^m\alpha_i\frac{\partial}{\partial z_i},\sum_{j=1}^m\beta_j\frac{\partial}{\partial z_j}\right)=\sum_{i=1}^m\alpha_i\frac{\partial}{\partial z_i}\left(\sum_{j=1}^m\beta_j\frac{\partial}{\partial z_j}(\sigma)\right).
$$
\end{definition}

In our context we will take $M=(T,0)$, $V=\bigoplus_{p=0}^{\frac{n}{2}-1}\mathscr{F}^{p}/\mathscr{F}^{p+1}$ and $x=0$. Where $T\subseteq H^0(\mathcal{O}_{\P^{n+1}}(d))$ is the parameter space of smooth degree $d$ hypersurfaces of $\P^{n+1}$, $\pi: X\rightarrow T$ is the corresponding family, $\mathscr{F}^p=R^n\pi_*\Omega_{X/T}^{\bullet\ge p}$, and $0\in T$ corresponds to the Fermat variety. In order to construct a section $\sigma$ of $V$ around $x$, let $\lambda\in H^{\frac{n}{2},\frac{n}{2}}(X^n_d)_\prim\cap H^n(X^n_d,\Z)$ be a Hodge cycle, and consider $\overline{\lambda}$ its induced flat section in $\mathscr{F}^0/\mathscr{F}^\frac{n}{2}$. If we fix a holomorphic splitting $\mathscr{F}^0/\mathscr{F}^\frac{n}{2}\simeq V$ and we take $\sigma$ as the image of $\overline{\lambda}$ under this splitting, then $W=V_\lambda$. In this context we can identify $T_xW=J^{F,\lambda}_d$ (\cref{propAGtang}), $V_x=\bigoplus_{q=\frac{n}{2}+1}^n R^F_{d(q+1)-n-2}$ and $d\sigma_x=\cdot P_\lambda$. The computation of the degree $d(\frac{n}{2}+2)-n-2$ piece of $q=q_{\sigma,x}$ under these identifications was done by Maclean \cite[Theorem 7]{maclean2005second} as follows.

\begin{theorem}[Maclean]
The degree $r:=d(\frac{n}{2}+2)-n-2$ piece of the fundamental quadratic form
$$
q: \text{Sym}^2(J^{F,\lambda}_d)\rightarrow \bigoplus_{q=\frac{n}{2}+1}^nR^F_{d(q+1)-n-2}/\langle P_\lambda\rangle
$$
is given by
$$
q_r(G,H)=\sum_{i=0}^{n+1}\left(H\frac{\partial Q_i}{\partial x_i}-R_i\frac{\partial G}{\partial x_i}\right)
$$
where 
$$
G\cdot P_\lambda=\sum_{i=0}^{n+1}Q_i\frac{\partial F}{\partial x_i} \ \ \ \text{ and } \ \ \ H\cdot P_\lambda=\sum_{i=0}^{n+1}R_i\frac{\partial F}{\partial x_i}.
$$
\end{theorem}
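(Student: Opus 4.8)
The plan is to compute the second-order Taylor expansion of the flat section $\sigma$ by hand, using the Griffiths residue calculus and the Gauss--Manin connection on the family $\pi\colon X\to T$. First I would make the reduction intrinsic: by the definition of $q_{\sigma,0}$ the value $q(G,H)$ is the directional Hessian $\partial_G\partial_H\sigma|_0$ computed in a holomorphic trivialization of $V$, and on $T_0V_\lambda=\ker(d\sigma_0)=J^{F,\lambda}_d$ (see \cref{propAGtang}) this is independent of the trivialization modulo $\operatorname{Im}(d\sigma_0)$. Hence it suffices to expand the flat class $\lambda$ over the two-parameter family $F_{s,t}=F+sG+tH$ and to extract the coefficient of $st$ in the top graded component of its projection $\sigma$ to $\mathscr{F}^{\frac{n}{2}-1}/\mathscr{F}^{\frac{n}{2}}\simeq R^F_r$. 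Since $G,H\in J^{F,\lambda}_d$ we have $GP_\lambda,HP_\lambda\in J^F$, so I fix expressions $GP_\lambda=\sum_iQ_i\frac{\partial F}{\partial x_i}$ and $HP_\lambda=\sum_iR_i\frac{\partial F}{\partial x_i}$, which are the raw material of the formula.

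Next I would write the flat section as a finite sum of residues $\lambda=\sum_{j\ge 0}\res\!\big(A_j\,\Omega/F_{s,t}^{\frac{n}{2}+1+j}\big)$ with numerators $A_j=A_j(s,t)$ normalized by $A_0(0,0)=P_\lambda$ and $A_j(0,0)=0$ for $j\ge1$, and impose $\nabla\lambda=0$ through the connection formula $\nabla_{\partial_s}\res(A\,\Omega/F_{s,t}^{k})=\res((\partial_sA)\,\Omega/F_{s,t}^{k})-k\,\res(AG\,\Omega/F_{s,t}^{k+1})$ (and its analogue for $\partial_t$). Reading this equation one Hodge graded piece at a time, and reducing any numerator lying in the Jacobian ideal via Griffiths' classical pole-order formula $\res\!\big((\sum_iB_i\partial_iF)\Omega/F^{k+1}\big)=\tfrac1k\res\!\big((\sum_i\partial_iB_i)\Omega/F^{k}\big)$, produces a recursion for the $A_j$. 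At first order this gives $d\sigma_0(G)=(\tfrac{n}{2}+1)[GP_\lambda]\in R^F_r$, i.e. (up to the harmless scalar $\tfrac n2+1$) the multiplication map $\cdot\,P_\lambda$, reconfirming $\ker(d\sigma_0)=J^{F,\lambda}_d$; and the $\mathscr{F}^{\frac{n}{2}}$ piece of the same equation yields the first-order variation of the top numerator, $\partial_sA_0|_0\equiv\sum_i\partial_iQ_i\pmod{J^F}$.

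The heart of the argument is the $st$-coefficient of $A_1$, the numerator of the $R^F_r$-component of $\sigma$. Reading flatness in the $t$-direction at pole order $\frac{n}{2}+2$ expresses $\partial_tA_1\equiv(\tfrac{n}{2}+1)\,\overline{A_0H}$, where $\overline{(\cdot)}$ denotes reduction modulo the \emph{moving} Jacobian ideal $J^{F_{s,t}}$. Differentiating in the $s$-direction at the origin splits into exactly two contributions. The variation of the top numerator gives $(\partial_sA_0|_0)H$, and the first-order step above computes $\partial_sA_0|_0\equiv\sum_i\partial_iQ_i$, so this term is $(\sum_i\partial_iQ_i)H$. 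The variation of the Jacobian ideal gives the term concealed in the reduction $\overline{(\cdot)}$: from $HP_\lambda=\sum_iR_i\partial_iF=\sum_iR_i(\partial_iF_{s,t}-s\,\partial_iG)$ one reads $HP_\lambda\equiv -s\sum_iR_i\partial_iG\pmod{J^{F_{s,t}}}$, contributing $-\sum_iR_i\partial_iG$. Adding the two yields precisely $q_r(G,H)=\sum_i\big(H\,\partial_iQ_i-R_i\,\partial_iG\big)$ in $R^F_r/\langle P_\lambda\rangle$, the overall factor $\tfrac{n}{2}+1$ being absorbed into the normalization of the holomorphic splitting.

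Two points then finish the proof and are where I expect the real care to be needed. First, well-definedness: changing the representation $GP_\lambda=\sum_iQ_i\partial_iF$ alters the $Q_i$ by a syzygy of $(\partial_iF)_i$, and on a smooth hypersurface every such syzygy is Koszul, so the induced change of $\sum_i\partial_iQ_i$ lies in $J^F$ and vanishes in $R^F_r$. Second, symmetry: performing the same computation with the roles of $s$ and $t$ exchanged yields $\sum_i(G\partial_iR_i-Q_i\partial_iH)$, and this agrees with $\sum_i(H\partial_iQ_i-R_i\partial_iG)$ because $\sum_i(HQ_i-GR_i)\partial_iF=H\cdot GP_\lambda-G\cdot HP_\lambda=0$ forces $(HQ_i-GR_i)_i$ to be a Koszul syzygy, so that $\sum_i\partial_i(HQ_i-GR_i)\in J^F$. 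The main obstacle is the second-order bookkeeping of the previous paragraph: one must follow the flat class through the Gauss--Manin connection and the iterated reduction of pole order while recognizing that it is precisely the reduction modulo the \emph{moving} ideal $J^{F_{s,t}}$ that produces the second summand $-R_i\partial_iG$, and one must keep every step consistent modulo $\operatorname{Im}(d\sigma_0)=\langle P_\lambda\rangle$ so that the answer is intrinsic.
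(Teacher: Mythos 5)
The paper gives no proof of this statement at all---it defers to Maclean's original argument with the remark that it ``applies word by word'' in higher dimension---and your proposal is a correct reconstruction of precisely that argument: the Gauss--Manin/Griffiths--Dwork computation of the $st$-coefficient of the top graded piece of the flat section over $F+sG+tH$, with the two summands $(\sum_i\partial_iQ_i)H$ and $-\sum_iR_i\,\partial_iG$ arising, exactly as you say, from the first-order variation of the numerator $A_0$ and from the variation of the Jacobian ideal $J^{F_{s,t}}$ hidden in the reduction, the overall factor $\frac n2+1$ matching the paper's normalization $d\sigma_x=\cdot P_\lambda$. Your well-definedness and symmetry checks via the fact that all syzygies of the regular sequence $(\partial F/\partial x_i)_i$ are Koszul are also the standard and correct ones, so I have nothing to add.
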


\begin{proposition}
\label{propqff}
Let $\lambda\in H^{\frac{n}{2},\frac{n}{2}}(X^n_d)_\prim\cap H^n(X^n_d,\Z)$ be a fake linear cycle given by \eqref{plambda}, and consider $$
G:=(x_{2i-2}-c_{2i-2}x_{2i-1})\cdot D\in J^{F,\lambda}_d.
$$
Then
$$
q_r(G,G)=\frac{-c_\lambda}{d}\prod_{j\neq i}\left(\frac{x_{2j-2}^{d-1}-(c_{2j-2}x_{2j-1})^{d-1}}{x_{2j-2}-c_{2j-2}x_{2j-1}}\right)\cdot D^2\cdot (c_{2i-2}^d+1).
$$
\end{proposition}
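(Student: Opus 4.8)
The plan is to apply Maclean's theorem directly, which reduces everything to two tasks: (i) writing $G\cdot P_\lambda$ as an explicit combination $\sum_k Q_k\,\frac{\partial F}{\partial x_k}$ of the partials of $F=x_0^d+\cdots+x_{n+1}^d$, and (ii) a short differentiate-and-cancel computation, since here $H=G$ forces $R_k=Q_k$. First I would abbreviate $p_j:=\big(x_{2j-2}^{d-1}-(c_{2j-2}x_{2j-1})^{d-1}\big)/\big(x_{2j-2}-c_{2j-2}x_{2j-1}\big)$, so that $P_\lambda=c_\lambda\prod_{j=1}^{\frac{n}{2}+1}p_j$ and $G=(x_{2i-2}-c_{2i-2}x_{2i-1})D$. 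The key algebraic observation is that the linear factor of $G$ is exactly the denominator cleared by $p_i$:
$$(x_{2i-2}-c_{2i-2}x_{2i-1})\,p_i=x_{2i-2}^{d-1}-c_{2i-2}^{d-1}x_{2i-1}^{d-1}=\tfrac1d\Big(\tfrac{\partial F}{\partial x_{2i-2}}-c_{2i-2}^{d-1}\tfrac{\partial F}{\partial x_{2i-1}}\Big).$$
Writing $\Pi:=\prod_{j\neq i}p_j$, which is independent of $x_{2i-2}$ and $x_{2i-1}$, this gives $G\cdot P_\lambda=\frac{c_\lambda}{d}D\,\Pi\big(\frac{\partial F}{\partial x_{2i-2}}-c_{2i-2}^{d-1}\frac{\partial F}{\partial x_{2i-1}}\big)$, so one may take the only nonzero components to be $Q_{2i-2}=\frac{c_\lambda}{d}D\Pi$ and $Q_{2i-1}=-\frac{c_\lambda}{d}c_{2i-2}^{d-1}D\Pi$. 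This representation also re-confirms that $G\in J^{F,\lambda}_d$.

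Next I would substitute into $q_r(G,G)=\sum_k\big(G\,\frac{\partial Q_k}{\partial x_k}-Q_k\,\frac{\partial G}{\partial x_k}\big)$, retaining only $k\in\{2i-2,2i-1\}$. Setting $L:=x_{2i-2}-c_{2i-2}x_{2i-1}$, the product rule gives $\frac{\partial G}{\partial x_{2i-2}}=D+L\frac{\partial D}{\partial x_{2i-2}}$ and $\frac{\partial G}{\partial x_{2i-1}}=-c_{2i-2}D+L\frac{\partial D}{\partial x_{2i-1}}$, while $\frac{\partial Q_{2i-2}}{\partial x_{2i-2}}$ and $\frac{\partial Q_{2i-1}}{\partial x_{2i-1}}$ act only on $D$, since $\Pi$ is constant in these two variables. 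After factoring out $\frac{c_\lambda}{d}\Pi$, all four terms carrying a first derivative of $D$ (the $LD\,\frac{\partial D}{\partial x_{\bullet}}$ contributions) cancel in pairs; the only survivors are $-D^2$, coming from $Q_{2i-2}\frac{\partial G}{\partial x_{2i-2}}$, and $-c_{2i-2}^dD^2$, coming from $Q_{2i-1}\frac{\partial G}{\partial x_{2i-1}}$. Hence the bracket collapses to $-D^2\big(1+c_{2i-2}^d\big)$, and expanding $\Pi$ back yields exactly the asserted formula.

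The only genuinely delicate point is that the decomposition $G\cdot P_\lambda=\sum_k Q_k\frac{\partial F}{\partial x_k}$ is not unique: the $Q_k$ can be altered by the Koszul syzygies of the regular sequence $\{x_k^{d-1}\}$, so a priori Maclean's formula must be read as an identity in the quotient $\bigoplus_q R^F_{d(q+1)-n-2}/\langle P_\lambda\rangle$, and one should know the output is independent of the chosen representation. I would dispose of this by invoking Maclean's theorem as stated — the quadratic fundamental form is intrinsically defined on $T_0V_\lambda$, so its formula descends well-definedly to that quotient — and then simply working with the convenient representation above; the resulting polynomial is a legitimate representative of $q_r(G,G)$. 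Everything else is purely formal, the essential mechanism being the fortuitous cancellation of every term involving a derivative of the auxiliary factor $D$, which leaves behind only the factor $(c_{2i-2}^d+1)$ measuring the failure of $c_{2i-2}$ to be a $d$-th root of $-1$.
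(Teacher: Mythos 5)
Your proposal is correct and follows essentially the same route as the paper: both identify the same two nonzero coefficients $Q_{2i-2}=\frac{c_\lambda}{d}D\Pi$ and $Q_{2i-1}=-\frac{c_\lambda c_{2i-2}^{d-1}}{d}D\Pi$ from the factorization $(x_{2i-2}-c_{2i-2}x_{2i-1})p_i=x_{2i-2}^{d-1}-(c_{2i-2}x_{2i-1})^{d-1}$ and then evaluate Maclean's formula directly, with the terms involving derivatives of $D$ cancelling. Your extra remark on well-definedness modulo the Koszul syzygies is a reasonable addition that the paper leaves implicit.
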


\begin{proof}
Just note that
$$
G\cdot P_\lambda=c_\lambda\prod_{j\neq i}\left(\frac{x_{2j-2}^{d-1}-(c_{2j-2}x_{2j-1})^{d-1}}{x_{2j-2}-c_{2j-2}x_{2j-1}}\right)\cdot D\cdot (x_{2i-2}^{d-1}-(c_{2i-2}x_{2i-1})^{d-1})
$$
hence $Q_j=0$ for $j\neq 2i-2, 2i-1$ and
$$
Q_{2i-2}=\frac{c_\lambda}{d}\prod_{j\neq i}\left(\frac{x_{2j-2}^{d-1}-(c_{2j-2}x_{2j-1})^{d-1}}{x_{2j-2}-c_{2j-2}x_{2j-1}}\right)\cdot D,
$$
$$
Q_{2i-1}=\frac{-c_\lambda\cdot c_{2i-2}^{d-1}}{d}\prod_{j\neq i}\left(\frac{x_{2j-2}^{d-1}-(c_{2j-2}x_{2j-1})^{d-1}}{x_{2j-2}-c_{2j-2}x_{2j-1}}\right)\cdot D.
$$
The result follows now by a direct computation of Maclean's formula.

\end{proof}

\begin{proof}\textbf{of \cref{theo1}}
After \cref{theo2} we just need to show that 
$$
\text{codim }V_\lambda>{\frac{n}{2}+d\choose d}-\left(\frac{n}{2}+1\right)^2
$$
for all fake linear cycles $\lambda\in H^{\frac{n}{2},\frac{n}{2}}(X^n_d)_\prim\cap H^n(X^n_d,\Z)$. In fact, otherwise $V_\lambda$ is smooth and reduced at Fermat, and so the quadratic fundamental form $q= 0$ vanishes. In particular its degree $r:=d(\frac{n}{2}+2)-n-2$ piece also vanishes $q_r=0$ and so by \cref{propqff} we conclude that $c_{2i-2}^d+1=0$ for all $i=1,\ldots,\frac{n}{2}+1$ contrary to the fact that $\lambda$ is a fake linear cycle.
\end{proof}

\bigskip

\noindent\textbf{Acknowledgements.} The first author was partially supported by Fondecyt ANID postdoctoral grant 3220631. The second author was supported by Fondecyt ANID postdoctoral grant 3210020.


\bibliographystyle{alpha}

\bibliography{ref}



\end{document}